\documentclass[11pt,a4paper]{amsart}
\usepackage[a4paper,textwidth=15.2cm,textheight=25.0cm,centering]{geometry}
\IfFileExists{newtxtext.sty}{\usepackage{newtxtext}}{\usepackage[T1]{fontenc}\usepackage{lmodern}}
\usepackage{mathtools,amssymb}
\IfFileExists{newtxmath.sty}{\usepackage{newtxmath}}{}
\usepackage{microtype,xcolor,aliascnt,titlesec}
\usepackage[hidelinks]{hyperref}
\hypersetup{
  colorlinks=true,
  linkcolor=cyan,
  anchorcolor=cyan,
  citecolor=red,
  urlcolor=black,
  pdfencoding=auto,
}
\usepackage[nameinlink,capitalise,noabbrev]{cleveref}

\hypersetup{
 pdftitle={Monotone Hadwiger Theorems on Spherical and Hyperbolic Convex Sets},
 pdfauthor={Houshan Fu},
 pdfsubject={Monotone invariant valuations in spherical, elliptic, conic, and hyperbolic convexity},
 pdfkeywords={monotone valuations, Hadwiger theorem, spherical convexity,
 hyperbolic convexity, automatic continuity}
}
\title{Monotone Hadwiger Theorems on Spherical and Hyperbolic Convex Sets}
\keywords{monotone valuations, Hadwiger theorem, spherical convexity,
hyperbolic convexity, automatic continuity}
\subjclass[2020]{Primary 52A55; Secondary 52B45, 53C65, 28A20}
\renewenvironment{abstract}
  {\par\small\begin{center}\bfseries\abstractname\end{center}\begin{quotation}}
  {\end{quotation}}

\titleformat{\section}
  {\normalfont\large\bfseries}{\thesection}{0.8em}{}
\titleformat{\subsection}
  {\normalfont\normalsize\bfseries}{\thesubsection}{0.7em}{}
\titlespacing*{\section}{0pt}{3.0ex plus 1ex minus .2ex}{1.4ex plus .2ex}
\titlespacing*{\subsection}{0pt}{2.4ex plus .8ex minus .2ex}{0.9ex plus .2ex}

\newtheorem{theorem}{Theorem}[section]
\newaliascnt{proposition}{theorem}
\newtheorem{proposition}[proposition]{Proposition}
\aliascntresetthe{proposition}
\newaliascnt{lemma}{theorem}
\newtheorem{lemma}[lemma]{Lemma}
\aliascntresetthe{lemma}
\newaliascnt{corollary}{theorem}
\newtheorem{corollary}[corollary]{Corollary}
\aliascntresetthe{corollary}
\newcommand{\Sph}{\mathbb S}
\newcommand{\R}{\mathbb R}
\newcommand{\K}{\mathcal K}
\newcommand{\Orth}{\mathrm{O}}
\newcommand{\SO}{\mathrm{SO}}
\newcommand{\sgn}{\operatorname{sgn}}
\newcommand{\cV}{\mathcal V}
\newcommand{\pos}{\operatorname{pos}}
\newcommand{\Gr}{\operatorname{Gr}}
\newcommand{\one}{\mathbf 1}
\newcommand{\dd}{\,\mathrm d}
\newcommand{\Ks}{\mathcal K_{\mathrm s}}
\newcommand{\Ksp}{\mathcal K_{\mathrm s}^{p}}
\newcommand{\Ps}{\mathcal P_{\mathrm s}}
\newcommand{\Psp}{\mathcal P_{\mathrm s}^{p}}

\begin{document}
\begin{center}
\makeatletter
{\Large\bfseries \@title\par}
\makeatother
\medskip
Houshan Fu

\medskip
School of Mathematics and Information Science, Guangzhou University\\
Guangzhou 510006, Guangdong, P. R. China

\medskip
Email address: fuhoushan@gzhu.edu.cn
\end{center}
\medskip
\begin{abstract}
For every $n\geq1$, we classify monotone rotation-invariant real-valued
valuations on closed spherical convex sets, without assuming continuity
or measurability. On proper sets, namely those contained in an open
hemisphere, these are precisely the nonnegative linear combinations of
the normalized spherical quermassintegrals. On all closed spherical
convex sets, they are precisely the linear combinations of the spherical
intrinsic volumes with nonnegative, nondecreasing coefficients. The
representations are unique, and all such valuations are continuous and
invariant under the full orthogonal group. In hyperbolic space, an
isometry-invariant real-valued valuation on compact convex sets is
continuous if and only if it is a linear combination of the Euler
characteristic and the hyperbolic quermassintegrals. This representation
is unique. Monotonicity is equivalent to nonnegative coefficients and
implies continuity. If monotonicity is required only between nonempty
sets, the Euler coefficient is unrestricted in the proper spherical and
hyperbolic cases, whereas the classification on all closed spherical
convex sets is unchanged. We also obtain corresponding classifications
for valuations on closed convex cones that vanish at the zero cone and
monotone classifications on compact projectively convex sets contained
in an affine chart of real elliptic space.
\end{abstract}

\begin{quotation}
\small
\noindent\textbf{Keywords:} monotone valuations, Hadwiger theorem, spherical
convexity, hyperbolic convexity

\noindent\textbf{2020 MSC:} Primary 52A55; Secondary 52B45, 53C65, 28A20
\end{quotation}

\section{Introduction}

Hadwiger characterized continuous rigid-motion-invariant valuations on
Euclidean convex bodies as linear combinations of intrinsic volumes
\cite{Hadwiger,KlainRota}.  McMullen established automatic continuity for
monotone translation-invariant real valuations on Euclidean convex bodies
\cite{McMullenValuations}.  Bernig and Fu proved that a continuous
translation-invariant valuation is monotone if and only if each of its
homogeneous components is monotone \cite[Theorem~2.12]{BernigFu}.
We study spherical and hyperbolic analogues, deriving the
regularity needed for classification from monotonicity.

The conic theory developed from angle-sum relations
\cite{McMullenAngles,Sommerville} and Gr\"unbaum's Grassmann angles
\cite{GrunbaumAngles}.  We use the cone convention for spherical
convexity \cite[Section~3.1]{Schneider}: the members of $\Ks(\Sph^n)$
are precisely the sets $C\cap\Sph^n$, where $C\subseteq\R^{n+1}$ is a
closed convex cone.  The zero cone corresponds to the empty set.  For
nonempty $K\in\Ks(\Sph^n)$, its positive hull is
\[
 K^\vee\coloneqq\pos K=\{t x:t\geq0,\ x\in K\},
 \qquad \varnothing^\vee\coloneqq\{0\}.
\]
Thus, $K=C\cap\Sph^n$ implies $K^\vee=C$.  A nonempty member is
\emph{proper} if it lies in an open hemisphere, equivalently, if
$K^\vee\cap(-K^\vee)=\{0\}$.  The proper members and the empty set form
$\Ksp(\Sph^n)$.  The larger family also includes antipodal pairs:
$\{u,-u\}$ corresponds to the line $\R u$.

McMullen's continuous and monotone spherical classification problems
were recorded in \cite[Problem~49]{GruberSchneider} and
\cite[Section~3.3, p.~126]{Schneider}.  Other formulations appear in
\cite[Problem~(15.5), p.~229]{McMullenSchneider}, \cite[p.~165]{GaoHugSchneider}, and
\cite[Section~4.2, Problem~2, pp.~42--43]{Glasauer}.
The simple one-dimensional case and the two-dimensional classification
were known for continuous orthogonally invariant valuations
\cite[Proposition~11.2.2 and Theorem~11.3.1]{KlainRota}.  Schneider
characterized spherical volume among simple, nonnegative, rotation-invariant
valuations on proper spherical polytopes in every dimension
\cite[Theorem~6.2]{SchneiderCurvature}, and Hack treated smooth invariant
valuations \cite[Theorem~4.1.3]{Hack}.  Knoerr proved the continuous
classification on spherical polytopes contained in open hemispheres
\cite[Theorem~A]{Knoerr}, using his Euclidean polytopal classification
\cite[Theorem~A]{KnoerrEuclidean}.  Wang and Wu obtained the continuous
classification on all closed spherical convex sets
\cite[Theorem~1.1]{WangWu}, and on proper closed spherical convex sets in
\cite[Corollary~8.3]{WangWu}.

Throughout, a valuation on a family $\mathcal A$ containing
$\varnothing$ is a map $\mu:\mathcal A\to\R$ satisfying
$\mu(\varnothing)=0$ and
$\mu(K)+\mu(L)=\mu(K\cup L)+\mu(K\cap L)$ whenever
$K,L,K\cup L,K\cap L\in\mathcal A$.  Monotonicity on nonempty sets means that
$\mu(K)\leq\mu(L)$ for $\varnothing\neq K\subseteq L$.
We use the spherical Hausdorff topology on nonempty sets and isolate
the empty set.  Denote the conic intrinsic volumes in $\R^{n+1}$ by
$v_0,\ldots,v_{n+1}$.  Following \cite[Equation~(3.20)]{Schneider} and
\cite[Section~2.3]{WangWu}, define the spherical intrinsic volumes by
\[
 v_j^{\mathrm s}(K)\coloneqq v_{j+1}(K^\vee)\quad(K\neq\varnothing),
 \qquad v_j^{\mathrm s}(\varnothing)\coloneqq0,
 \qquad 0\leq j\leq n.
\]
For monotone valuations on $\Ksp(\Sph^n)$, the spherical
quermassintegrals lead to simpler coefficient conditions.  Write $\one_A$ for
the indicator of $A$, $\Gr(\R^{n+1},n+1-j)$ for the Grassmannian of
$(n+1-j)$-dimensional linear subspaces, and $\nu_{n+1-j}$ for its
$\Orth(n+1)$-invariant probability measure.  For $0\leq j\leq n$, the
\emph{$j$th normalized spherical quermassintegral}
$U_j:\Ksp(\Sph^n)\to\R$ is defined by $U_j(\varnothing)=0$ and, for
$K\neq\varnothing$, by
\[
 U_j(K)\coloneqq\frac12\int_{\Gr(\R^{n+1},n+1-j)}
 \one_{\{K\cap L\neq\varnothing\}}\dd\nu_{n+1-j}(L).
\]
The factor $1/2$ agrees with \cite[Equation~(2.43)]{Schneider}.

\begin{theorem}[Monotone Spherical Hadwiger Theorem]
\label{thm:main}\label{thm:monotone-all}
Let $n\geq1$.
\begin{enumerate}
\renewcommand{\labelenumi}{\textup{(\roman{enumi})}}
\item An $\SO(n+1)$-invariant valuation
$\mu:\Ksp(\Sph^n)\to\R$ is monotone on nonempty sets if and only if
it has a unique representation
\[
 \mu=\sum_{j=0}^n c_jU_j,
 \qquad c_0\in\R,\quad c_1,\ldots,c_n\geq0.
\]
A valuation with this representation is monotone on $\Ksp(\Sph^n)$
if and only if $c_0\geq0$.
\item An $\SO(n+1)$-invariant valuation
$\mu:\Ks(\Sph^n)\to\R$ is monotone if and only if it has a unique
representation
\[
 \mu=\sum_{j=0}^n a_jv_j^{\mathrm s},
 \qquad 0\leq a_0\leq a_1\leq\cdots\leq a_n.
\]
\end{enumerate}
Every such valuation is continuous and $\Orth(n+1)$-invariant.
\end{theorem}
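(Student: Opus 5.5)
The plan is to reduce everything to the known continuous classifications, namely Knoerr/Wang--Wu on proper sets and Wang--Wu \cite[Theorem~1.1]{WangWu} on $\Ks(\Sph^n)$. The key new ingredient is an automatic continuity statement: a monotone (on nonempty sets) $\SO(n+1)$-invariant valuation is continuous.

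The sufficiency directions are direct. Each $U_j$ with $j\ge1$ is monotone, since $K\subseteq L$ implies $\{K\cap L'\neq\varnothing\}\subseteq\{L\cap L'\neq\varnothing\}$. Also $U_0$ equals $\tfrac12$ times the Euler characteristic on proper sets. Hence $U_0$ is constant on nonempty sets, and it is monotone including $\varnothing$ exactly when $c_0\ge0$. For (ii), I would rewrite $\sum a_jv_j^{\mathrm s}=a_0\sum_{j} v^{\mathrm s}_j+\sum_{k\ge1}(a_k-a_{k-1})\sum_{j\ge k}v_j^{\mathrm s}$. By the spherical Crofton formula, the tail sums $\sum_{j\ge k}v_j^{\mathrm s}(K)$ are, up to the factor $2$, the probability that a random $(n+1-k)$-subspace meets $K$, and so they are monotone, including the case $K=\varnothing$. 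Monotonicity is preserved under nonnegative combinations. Uniqueness follows from the linear independence of the $U_j$ (respectively $v^{\mathrm s}_j$), which can be tested on great subspheres of increasing dimension; there the $U_j$ values form a triangular system.

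For the necessity direction, the main step is continuity. The plan is a McMullen-type argument adapted to the sphere. First, approximate a proper $K$ from outside by $K_\varepsilon$, its spherical $\varepsilon$-neighbourhood, which is proper for small $\varepsilon$, and from inside by shrinking toward an interior point, using a radial contraction $K^{(\varepsilon)}$ inside an open hemisphere. Monotonicity then gives $\mu(K^{(\varepsilon)})\le\mu(K)\le\mu(K_\varepsilon)$. The task is to show that $\mu(K_\varepsilon)-\mu(K^{(\varepsilon)})\to0$. For this I would use the valuation property together with invariance to cover the shell $K_\varepsilon\setminus \operatorname{int}K^{(\varepsilon)}$ by a controlled number of congruent small pieces (for polytopes, pieces adjacent to faces, decomposed inclusion--exclusion style). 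The value of each piece is bounded by monotonicity and invariance in terms of $\mu$ of a small cap of radius $\varepsilon$. Next, $\mu(\text{cap}_\varepsilon)$ is monotone in $\varepsilon$, and subadditivity-type counting from packing caps into a fixed cap forces the relevant increments to be small. Concretely, I would show that $\phi(\varepsilon)=\mu(\text{cap}_\varepsilon)-\mu(\text{pt})$ is monotone, and that $N(\varepsilon)$ disjointish caps fit into a fixed set, which forces $\phi(\varepsilon)\to0$ at the required rate. This is the step I expect to be the main obstacle. I would first try to prove continuity of $\mu$ on proper polytopes by induction on dimension via simple valuations, using Schneider's simple-valuation characterization \cite[Theorem~6.2]{SchneiderCurvature} for the top-degree part. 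I would then pass to all proper sets by monotone approximation from inside and outside. Knoerr's polytopal theorem \cite[Theorem~A]{Knoerr} gives the form $\sum c_jU_j$ on polytopes. Monotonicity sandwiching then extends this formula to all proper bodies, since the $U_j$ are continuous.

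Once continuity is in hand, the remaining steps are short. In case (i), the continuous classification gives $\mu=\sum c_jU_j$. Evaluating on nested great-subsphere pieces then yields $c_j\ge0$ for $j\ge1$. For instance, comparing a point, a small arc, and so on, the leading behaviour under inclusion isolates each $c_j$, because $U_j$ increases from $0$ exactly when the dimension reaches $j$. In case (ii), I would first restrict $\mu$ to proper sets, which gives continuity there. I would then treat nonproper sets, which are of the form (subspace) $\oplus$ (proper cone), and use monotonicity sandwiching from proper approximants to get continuity on all of $\Ks$. Wang--Wu then gives $\mu=\sum a_jv_j^{\mathrm s}$. The inequalities $0\le a_0\le\dots\le a_n$ come from comparing $\varnothing\subseteq\{u,-u\}$ and, more generally, great $(k-1)$-spheres $\subseteq$ great $k$-spheres. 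On a great $k$-sphere $S_k$ we have $v_j^{\mathrm s}(S_k)=\delta_{jk}$, since the cone is a subspace, so monotonicity yields $a_{k-1}\le a_k$. Monotonicity on nonempty sets alone already forces these inequalities except $a_0\ge0$. That last one follows from $\{u\}\subseteq\{u,-u\}$ being combined with additivity $\mu(\{u,-u\})=2\mu(\{u\})$. Finally, $\Orth(n+1)$-invariance is inherited from the representing functionals.
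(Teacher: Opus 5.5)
There is a genuine gap at the step you flag as the main obstacle. Nothing in the proposal establishes automatic continuity, or equivalently identifies the simple invariant residual on proper polytopes, and the tools you name cannot do it.

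\textbf{Why the continuity step fails.} The shell-covering and cap-packing counts break down because monotonicity holds only between convex sets. It does not pass to the inclusion--exclusion extension on finite unions. For $U_1$ on $\Sph^n$ with $n\geq2$, a family of $N\asymp\varepsilon^{-n}$ disjoint $\varepsilon$-caps inside a fixed cap has total value $\asymp\varepsilon^{1-n}\to\infty$. Similarly, bounding each of the $\asymp\varepsilon^{-(n-1)}$ shell pieces by the value of an $\varepsilon$-cap leaves an error of order $\varepsilon^{j-(n-1)}$ from a $U_j$-component, which does not tend to zero for $j\leq n-1$. The smallness of $\mu(K_\varepsilon)-\mu(K^{(\varepsilon)})$ comes from exact inclusion--exclusion cancellations that one-sided bounds cannot detect. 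McMullen's Euclidean argument relies on translation invariance and polynomial behaviour under Minkowski combinations and dilations, which have no spherical counterpart.

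The fallback does not work either. After subtracting the continuous $\nu$ that agrees with $\mu^\circ$ on lower-dimensional sets, the simple residual $\mu^\circ-\nu$ is a difference of two valuations with no a priori sign. So the nonnegativity hypothesis of \cite[Theorem~6.2]{SchneiderCurvature} is unavailable. Moreover, \cite[Theorem~A]{Knoerr} can be invoked only after polytopal continuity is proved, which is exactly the missing step.

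\textbf{How the paper avoids continuity.} The missing idea is a way to kill the residual without continuity, and the paper supplies it as follows.
\begin{enumerate}
\item After the fan extension, the residual $\lambda=\overline\eta-a_nv_n^{\mathrm s}$ is shown to lie in the class $\cV_n$. This class consists of bounded simple valuations that, on full-dimensional polytopes, equal a continuous valuation plus finitely many bounded monotone functions, here $\Phi_\tau(P)=\mu^\circ(P\cap R_\tau)$.
\item The Borwein--Burke--Lewis theorem \cite{BBL} makes the apex functions $x\mapsto\Phi(x*Q)$ measurable (\autoref{lem:apex}).
\item Signed coning maps $\cV_n$ into $\cV_{n-1}$ (\autoref{prop:T}).
\item The Wang--Wu oriented-simplex identity then gives $\lambda=0$ by induction (\autoref{prop:vanishing}).
\end{enumerate}
Continuity is therefore an output, not an input: the polytopal representation extends by the inner/outer polytope sandwich (\autoref{lem:monotone-approximation}).

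\textbf{Smaller points to repair.}
\begin{enumerate}
\item In (ii), proper sets cannot approximate $\{u,-u\}$ or great subspheres from inside. No continuity is needed there, though: by uniqueness of the fan extension (\autoref{lem:fan}), $\mu$ on $\Ks(\Sph^n)$ is determined by its restriction to $\Ksp(\Sph^n)$. The same device is needed for uniqueness in (i), since great subspheres are not proper.
\item The tail sum $\sum_{j\geq k}v_j^{\mathrm s}$ equals $U_k+U_{k+1}$, not a single hitting probability. Also \eqref{eq:Ucrofton} fails for cones that are linear subspaces, so monotonicity of the tail sums needs the separate subspace cases of \autoref{lem:all-closed-signs}.
\item For $c_k\geq0$, small nested sets have increments dominated by the lowest-order terms, so they do not isolate $c_k$. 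The paper instead uses nested caps $K_{k-1}(t)\subseteq K_k(t)$ tending to hemispheres, on which $U_j=1/2$ for $j\leq k$ and $U_j=0$ for $j>k$.
\end{enumerate}
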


$U_0(K)=1/2$ for every nonempty $K\in\Ksp(\Sph^n)$, which explains the
unrestricted coefficient $c_0$. For valuations on $\Ks(\Sph^n)$,
monotonicity on nonempty sets also holds for comparisons with $\varnothing$.
Indeed, for $p\in K\neq\varnothing$, monotonicity and the valuation identity imply
\[
 0\leq\mu(\{p,-p\})-\mu(\{-p\})=\mu(\{p\})\leq\mu(K).
\]
Consequently, no additional assumption is needed for comparison with the empty set.

The classifications in \autoref{thm:main} also appear in conic form in
\cite[Corollary~1.2 and Theorem~1.1]{LotzMonotone}.
Under the link correspondence, part~\textup{(i)} is the pointed-cone
classification, whereas part~\textup{(ii)} is the all-cone classification
normalized to vanish at the zero cone.
Our proof uses the oriented simplex identity and signed-coning
dimension reduction of Wang and Wu
\cite[Lemma~6.2 and Sections~7--8]{WangWu}.
We establish the required measurability and prove that signed coning
preserves a class of bounded simple valuations represented by a
continuous term and finitely many bounded monotone terms.
This permits dimension reduction without assuming continuity of the
original valuation.  Restriction to great hyperspheres and two-sided
approximation then give the classification.

Klain classified continuous invariant valuations on hyperbolic polygons,
including polygons with ideal vertices
\cite[Theorem~3.2]{KlainHyperbolic}.  In higher odd dimensions, he showed
that simple continuous invariant valuations on hyperbolic polytopes,
either compact or with ideal vertices, are determined by their values on ideal
simplices \cite[Theorem~5.5]{KlainHyperbolic}.
His continuity assumption includes limits with fixed ideal vertices.
Our hyperbolic theorem treats compact convex sets in every dimension.

Bernig, Faifman, and Solanes classified invariant generalized valuations
on isotropic space forms \cite[Theorem~C]{BernigFaifmanSolanes}.
For the smooth-valuation framework, see \cite{AleskerFu}.
We work directly with compact hyperbolic convex sets, using reflection averaging
and simplex estimates, and assume no extension to generalized valuations.

We take \(\mathbb H^n\) to have sectional curvature \(-1\), and write
\(\K(\mathbb H^n)\) for its compact geodesically convex sets together
with the empty set.  We use the hyperbolic Hausdorff metric and isolate
the empty set.  Write \(\chi\) for the Euler valuation and
\(W_0^{(n)},\ldots,W_{n-1}^{(n)}\) for the quermassintegrals in the
normalization of Andrews and Wei \cite[Equation~(1.3)]{AndrewsWei}, with ambient
dimension \(n\) in place of their \(n+1\).
Thus, \(W_0^{(n)}=\operatorname{vol}_n\) and
\(W_1^{(n)}(K)=\operatorname{area}_{n-1}(\partial K)/n\) for
full-dimensional \(K\), \(n\geq2\).  Their Crofton definition is
given in \autoref{subsec:hb-crofton}.  Invariance is under the full
isometry group \(\operatorname{Isom}(\mathbb H^n)\).

\begin{theorem}[Hyperbolic Hadwiger Theorem]
\label{thm:hyperbolic}\label{thm:hyperbolic-continuous}
Let $n\geq1$ and $\mu:\K(\mathbb H^n)\to\R$ be an
isometry-invariant valuation.
\begin{enumerate}
\renewcommand{\labelenumi}{\textup{(\roman{enumi})}}
\item $\mu$ is continuous if and only if it has a unique representation
\[
 \mu=a\chi+\sum_{k=0}^{n-1}b_kW_k^{(n)},
 \qquad a,b_0,\ldots,b_{n-1}\in\R.
\]
\item $\mu$ is monotone on nonempty sets if and only if it has the
representation in \textup{(i)} with \mbox{$b_0,\ldots,b_{n-1}\geq0$}.
\item $\mu$ is monotone on $\K(\mathbb H^n)$ if and only if it has the
representation in \textup{(i)} with nonnegative coefficients.
\end{enumerate}
Every such monotone valuation is continuous.
\end{theorem}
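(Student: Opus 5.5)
The plan is to prove (i) first; (ii) and (iii) then reduce to it together with automatic continuity of monotone valuations.

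\textbf{Step 1: the basis valuations.} Each $W_k^{(n)}$ is given by a Crofton formula over totally geodesic subspaces, described in \autoref{subsec:hb-crofton}. This formula shows three things directly. Each $W_k^{(n)}$ is an isometry-invariant valuation. It is continuous, by dominated convergence, because the set of subspaces touching $\partial K$ is null. It is monotone and nonnegative, because the integrand $\one_{\{K\cap E\neq\varnothing\}}$ is monotone in $K$. The Euler valuation $\chi$ is continuous and invariant. It is monotone on nonempty sets, since it is constant there, and it is monotone on $\K(\mathbb H^n)$. This settles all the ``if'' directions once the signs are checked.

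\textbf{Step 2: continuous classification and uniqueness.} I would argue by induction on $n$ through simple valuations. The case $n=1$ is elementary: an invariant valuation on segments is $a\chi+b\,\mathrm{length}$ by Cauchy's equation plus continuity. For $n\ge2$, restrict $\mu$ to a totally geodesic $\mathbb H^{n-1}$. The restriction is an invariant continuous valuation there, so by induction it is a combination of $\chi$ and the $W_k^{(n-1)}$. The hyperbolic quermassintegrals $W_k^{(n)}$ restricted to lower-dimensional sets are combinations of the lower-dimensional ones; this uses the Andrews--Wei recursion. So we can subtract a combination $a\chi+\sum_{k\ge1}b_kW_k^{(n)}$ and obtain a continuous invariant valuation that is simple.

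The step I expect to be the main obstacle is showing that such a simple valuation is $c\operatorname{vol}_n$. There is no Klain--Schneider even/odd splitting available. First I would use reflection averaging: $\mu(K)+\mu(\sigma K)$ is even, and a simplex cut along a symmetric hyperplane reduces odd parts via reflections, since $\operatorname{Isom}$ contains reflections. Then I would run a Hadwiger-type argument on small simplices. Normalize so that $\mu$ vanishes on a reference family, and use simplex dissection estimates together with continuity to conclude that $\mu/\operatorname{vol}$ is constant on small simplices. This extends to polytopes and then, by continuity, to all of $\K(\mathbb H^n)$. Uniqueness follows by evaluating on balls of radius $r$ and on lower-dimensional sets. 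The functions $r\mapsto W_k^{(n)}(B_r)$ are linearly independent together with $1$; for example, the dimensions of the sets on which each $W_k$ first becomes nonzero separate them.

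\textbf{Step 3: monotone implies continuous.} For (ii) and (iii), I first show that monotonicity on nonempty sets forces continuity, following McMullen's idea. Take the simple part from the inductive step, which is monotone modulo controlled terms, and squeeze $K$ between inner parallel bodies and outer parallel bodies $K_{\pm\varepsilon}$. Monotonicity gives $\mu(K_{-\varepsilon})\le\mu(K)\le\mu(K+\varepsilon)$. Lower-dimensional $K$ are handled via the restriction induction. The gap $\mu(K+\varepsilon)-\mu(K_{-\varepsilon})$ can be bounded by tiling the shell with boundedly many small simplices of comparable size. Monotonicity controls each of them by $\mu$ of a small ball, and that value tends to $\mu(\text{point})$ by a dissection and counting argument.

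Given continuity, (i) applies. Monotonicity along balls $B_r\subset B_{r'}$ and along lower-dimensional sets, tested degree by degree at small scales, forces $b_k\ge0$. Since $\chi$ is constant on nonempty sets, $a$ is free in (ii). In (iii), comparing with $\varnothing$ at a point gives $a=\mu(\{p\})\ge0$.
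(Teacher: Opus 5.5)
Your proposal has genuine gaps at the two steps that carry the theorem. The first is the decisive step of your Step 2, which you name but do not carry out. You need that an isometry-invariant simple valuation $\lambda$ on hyperbolic polytopes (continuous, or a monotone valuation minus a continuous one) equals $c\operatorname{vol}_n$. The Euclidean arguments for this rely on boxes, cylinders, dilations and translations, which have no counterpart in $\mathbb H^n$. Your reflection step is empty: full isometry invariance already gives $\mu(\sigma K)=\mu(K)$, so there is no odd part to remove.

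Dissection alone cannot work either. If $g:\R\to\R$ is additive but not linear, then $g\circ\operatorname{vol}_n$ is a simple invariant valuation on $\mathcal P(\mathbb H^n)$ that satisfies every dissection relation. So regularity must be converted into differentiability. ``Simplex dissection estimates together with continuity'' gives no mechanism for this, because continuity alone says nothing about the size of $\lambda(S)-c\operatorname{vol}_n(S)$ on small simplices.

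The idea you are missing is the paper's regularization of the oriented simplex function $f(x_0,\dots,x_n)=\pm\lambda([x_0,\dots,x_n])$. This function satisfies a cocycle identity, obtained from a signed indicator identity and finite cutting. Integrate that identity for $(y,x_0,\dots,x_n)$ against a smooth invariant kernel $k(x_0,y)\dd y$. Each term containing both $y$ and $x_0$ integrates to zero: reflection in a hyperplane through its $n$ fixed vertices fixes $x_0$, preserves the kernel, and reverses the sign of $f$. Hence $f(x_0,\dots,x_n)=\int k(x_0,y)f(y,x_1,\dots,x_n)\dd y$, and likewise in every slot, so $f$ is smooth. This needs only a local bound and measurability of $x\mapsto\lambda([Q,x])$. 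A Euclidean lemma in the Klein model then gives a smooth density, and invariance makes it a constant multiple of the hyperbolic volume density.

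Your Step 3 fails as well. A shell of width $\varepsilon$ around $\partial K$ needs on the order of $\varepsilon^{1-n}$ simplices of diameter comparable to $\varepsilon$, not boundedly many. Knowing that each piece has value near a point value therefore controls nothing unless the simple part is $o(\varepsilon^{n-1})$ on each piece, which is the same missing regularity. Moreover, the shell is not convex, so inclusion--exclusion introduces signed lower-dimensional terms, and the simple residual is not monotone. McMullen's Euclidean argument uses the polynomial expansion under dilations, which has no hyperbolic analogue.

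The paper reverses your order. Monotonicity is used only to obtain two things: the local bound on compact balls, and measurability of $x\mapsto\lambda([Q,x])$. The latter comes from the Borwein--Burke--Lewis theorem, applied in coordinates where $[Q,x]$ shrinks along a cone with nonempty interior. The smoothing argument then gives the representation on polytopes, and squeezing $R_i^-\subseteq K\subseteq R_i^+$ extends it to $\K(\mathbb H^n)$. Continuity is a consequence of the representation, not an input.

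Finally, your sign test does not give $b_0\geq0$ for $n\geq2$. Lower-dimensional sets do not see $W_0^{(n)}$, and balls do not isolate it either. For $0<\epsilon\leq(n-1)/n$, the valuation $W_1^{(n)}-\epsilon W_0^{(n)}$ increases with the radius of a ball, although it is not monotone. You need nested polytopes $P\subseteq P_t$ whose volume increases by at least $ct$ while every other $W_k^{(n)}$ increases by $o(t)$, as for a thin pyramid added to a facet.
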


In dimension two, \(W_0^{(2)}=A\) and \(W_1^{(2)}=P/2\), where \(A\) is area and
\(P\) is perimeter, with \(P(I)=2\operatorname{length}_{\mathbb H}(I)\)
for a segment \(I\) \cite[Section~2]{KlainHyperbolic}.
Thus, \autoref{thm:hyperbolic}\textup{(i)} describes all linear combinations of
\(\chi,P,A\).  In the monotone case, the coefficients of \(P\) and
\(A\) are nonnegative.

We state the conic formulations in
\autoref{subsec:conic-formulation} and derive the elliptic consequences in
\autoref{subsec:elliptic-corollaries}.  Our elliptic domain consists of
compact projectively convex sets contained in an affine chart of
$\mathbb{RP}^n$; see \cite[Section~9.2]{DanzerGrunbaumKlee} for background
on projective convexity.
For invariant measures and Crofton formulas in elliptic space, see
\cite[Sections~17.3--17.4]{Santalo}.
The antipodal covering induces a bijection between
isometry-invariant elliptic valuations and orthogonally invariant valuations
on proper spherical convex sets. This bijection preserves monotonicity
and continuity with respect to the elliptic and spherical Hausdorff metrics
in \autoref{lem:elliptic-lift}. Thus the continuous classification of
Wang and Wu \cite[Corollary~8.3]{WangWu} and the monotone classification
in \autoref{thm:main} yield the corresponding elliptic classifications.

\section{Preliminaries}\label{sec:preliminaries}

For spherical and conic terminology, see
\cite[Chapters~1--4]{Schneider}; for Euclidean convexity, see
\cite[Chapter~1]{SchneiderConvexBodies}.

\subsection{Spherical convex sets, cones, and topology}
Throughout, $n$ denotes the dimension of the sphere or hyperbolic space.
In the spherical setting, the corresponding cone lies in an
$(n+1)$-dimensional Euclidean space.
For the definitions below, let $W$ be any Euclidean space of dimension
$n+1$, where $n\geq0$, with origin $0$, inner product
$\langle\cdot,\cdot\rangle$, and norm $\|\cdot\|$. Its unit sphere is
\[
 \Sph(W)\coloneqq\{x\in W:\|x\|=1\}.
\]
Accordingly, $\Sph(\R^{n+1})=\Sph^n$; we also set
$\Sph(\{0\})=\varnothing$. Every linear subspace carries the induced
Euclidean structure. Orthogonal complements are taken in the ambient
Euclidean space under consideration.
Write $\Orth(W)$ for the orthogonal group of $W$ and
$\SO(W)=\{g\in\Orth(W):\det g=1\}$ for its special orthogonal group.
For $W=\R^{n+1}$, we write $\Orth(n+1)$ for $\Orth(W)$ and
$\SO(n+1)$ for $\SO(W)$.
Vectors are written in ordinary lower-case letters, and tuples of vectors
in bold italic letters. Vectors in a determinant
are its columns, and a hat over an entry means that the entry is omitted.

For $e\in \Sph(W)$, set
\[
 H_e\coloneqq\{x\in \Sph(W):\langle e,x\rangle=0\}=\Sph(W\cap e^\perp),
 \qquad
 S_e^\pm\coloneqq\{x\in \Sph(W):\mathord\pm\langle e,x\rangle>0\}.
\]
The set $H_e$ is a \emph{great hypersphere}, and $S_e^+$ and $S_e^-$ are its
complementary \emph{open hemispheres}.  Their closures are denoted by
$\overline S_e^\pm\coloneqq\{x\in \Sph(W):\mathord\pm\langle e,x\rangle\geq0\}$.
The \emph{gnomonic projection} with pole $e$ is the
homeomorphism
\[
 g_e:S_e^+\to W\cap e^\perp,\qquad
 g_e(x)\coloneqq\frac{x}{\langle e,x\rangle}-e,\qquad
 g_e^{-1}(z)\coloneqq\frac{e+z}{\|e+z\|}.
\]
The map $g_e$ identifies closed spherical convex sets contained in $S_e^+$
with compact Euclidean convex sets in $W\cap e^\perp$, and spherical polytopes
with Euclidean polytopes \cite[Section~3.2]{Schneider}.
In particular, if $U\subseteq W$ has codimension one and $e$ is a unit normal
to $U$ in $W$, then $\Sph(U)=H_e$.

For $A\subseteq W$, $\operatorname{span}A$, $\operatorname{aff}A$, and
$\operatorname{conv}A$ denote its linear, affine, and convex hulls,
respectively.  Its \emph{positive hull} is
\[
 \pos A\coloneqq
 \{t_1x_1+\cdots+t_qx_q:q\geq1,\ t_i\geq0,\ x_i\in A
 \text{ for }1\leq i\leq q\}\cup\{0\}.
\]

A \emph{convex cone} is a nonempty set $C\subseteq W$ such that
$s x+t y\in C$ whenever $x,y\in C$ and $s,t\geq0$.
Every closed convex cone $C\subseteq W$ determines the closed spherically
convex set $C\cap\Sph(W)$.  Conversely, the cone determined by a closed
spherically convex set $K\subseteq\Sph(W)$ is its positive hull
\[
 K^\vee\coloneqq\pos K\quad(K\neq\varnothing),
 \qquad \varnothing^\vee\coloneqq\{0\}.
\]
The maps $C\mapsto C\cap\Sph(W)$ and $K\mapsto K^\vee$ are inverse to each
other, with the zero cone corresponding to the empty set
\cite[Section~3.1]{Schneider}.  We denote
the family of all closed
spherically convex sets, with the empty set adjoined, by $\Ks(\Sph(W))$.
A cone $C$ is \emph{pointed} if $C\cap(-C)=\{0\}$.  A nonempty
$K\in\Ks(\Sph(W))$ is \emph{proper} if it is contained in an open hemisphere;
equivalently, $K^\vee$ is pointed \cite[Section~3.1]{Schneider}.  The family
consisting of the proper
members and the empty set is denoted by $\Ksp(\Sph(W))$.  We set
$\dim K\coloneqq\dim\operatorname{span}K^\vee-1$; in particular,
$\dim\varnothing=-1$.
A closed convex cone is \emph{polyhedral} if it is the intersection of
finitely many closed halfspaces whose boundaries contain $0$; the empty
intersection is $W$. Equivalently, it is the positive hull of a finite set, by the
Minkowski--Weyl theorem \cite[Section~2.2]{WangWu}.
A \emph{spherical polytope} is an intersection $C\cap\Sph(W)$, where $C$ is a
polyhedral cone.  The symbols $\Ps(\Sph(W))$ and $\Psp(\Sph(W))$
denote, respectively, all spherical polytopes and all proper spherical
polytopes, in both cases including $\varnothing$.  Neither notation implies
full dimensionality.

We write $d$ for Euclidean Hausdorff distance and $d_{\mathrm s}$,
$d_{\mathrm c}$, $d_{\mathbb H}$, and $d_{\mathrm e}$ for the spherical,
conic, hyperbolic, and elliptic Hausdorff distances, respectively.
Point distances use $\operatorname{dist}$ with the corresponding subscript.
The elliptic and hyperbolic distances are defined in
\autoref{subsec:elliptic-corollaries} and \autoref{sec:hyperbolic}, respectively.
The Euclidean point-to-set distance and Hausdorff distance are
\[
 \operatorname{dist}(x,A)\coloneqq\inf_{a\in A}\|x-a\|,\qquad
 d(A,B)\coloneqq\max\Big\{
 \sup_{a\in A}\operatorname{dist}(a,B),
 \sup_{b\in B}\operatorname{dist}(b,A)\Big\},
\]
where $A,B$ are nonempty compact subsets of a Euclidean space.
We write $B^n=\{x\in\R^n:\|x\|<1\}$ and
$\overline B^n=\{x\in\R^n:\|x\|\leq1\}$.

On $\Sph(W)$, the spherical point distances are
\[
 \operatorname{dist}_{\mathrm s}(x,y)\coloneqq\arccos\langle x,y\rangle,
 \qquad
\operatorname{dist}_{\mathrm s}(x,A)
 \coloneqq\inf_{a\in A}\operatorname{dist}_{\mathrm s}(x,a).
\]
For nonempty compact $K,L\subseteq\Sph(W)$, set
\[
 d_{\mathrm s}(K,L)\coloneqq\max\Big\{
 \sup_{x\in K}\operatorname{dist}_{\mathrm s}(x,L),
 \sup_{y\in L}\operatorname{dist}_{\mathrm s}(y,K)\Big\}.
\]
Extend this metric by $d_{\mathrm s}(\varnothing,\varnothing)=0$ and
$d_{\mathrm s}(\varnothing,K)=d_{\mathrm s}(K,\varnothing)=\pi$ for
$K\neq\varnothing$. Hence, the empty set is isolated.
For the family $\mathcal C(W)$ of closed convex cones in $W$, put
\[
 \overline B_W\coloneqq\{x\in W:\|x\|\leq1\},\qquad
 d_{\mathrm c}(C,D)\coloneqq d(C\cap \overline B_W,D\cap \overline B_W).
\]
We equip $\mathcal C(W)$ with the $d_{\mathrm c}$-topology and
$\Ks(\Sph(W))$ with the $d_{\mathrm s}$-topology.
The families $\Ksp(\Sph(W))$, $\Ps(\Sph(W))$, and
$\Psp(\Sph(W))$ carry the subspace topologies.
For cone and spherical set variables, convergence and continuity refer to
these topologies. Interiors of
subsets of a sphere are relative to that sphere; relative interiors
of Euclidean convex sets are taken in their affine hulls.
For a spherical convex set, relative interior means interior in
its smallest containing great subsphere.

For a closed convex cone $C\subseteq W$, define its spherical link by
\[
 \operatorname{link}(C)\coloneqq C\cap\Sph(W)\quad(C\neq\{0\}),
 \qquad \operatorname{link}(\{0\})\coloneqq\varnothing.
\]
\begin{lemma}[{\cite[Lemma~2.1]{WangWu}}]\label{lem:link-homeomorphism}
The bijection
\[
 \operatorname{link}:\mathcal C(W)\to\Ks(\Sph(W))
\]
is a homeomorphism from the conic Hausdorff topology to the spherical
Hausdorff topology.
\end{lemma}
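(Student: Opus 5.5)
The statement is a bijection claim together with continuity in both directions, so I will prove each piece directly.

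\emph{Bijection.} By definition, $\operatorname{link}(C)=C\cap\Sph(W)$, and $K\mapsto K^\vee$ is its inverse, as recorded above from \cite[Section~3.1]{Schneider}.

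\emph{Empty set versus zero cone.} In $\Ks(\Sph(W))$ the empty set is isolated. In $\mathcal C(W)$ the zero cone is also isolated: for any cone $C\neq\{0\}$, the set $C\cap\overline B_W$ contains a unit vector, so
\[
 d_{\mathrm c}(C,\{0\})=d\bigl(C\cap\overline B_W,\{0\}\bigr)\geq1 .
\]
It therefore suffices to compare the two metrics on nonzero cones and nonempty links.

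\emph{Comparing the metrics.} Let $C,D$ be nonzero closed convex cones with links $K=C\cap\Sph(W)$ and $L=D\cap\Sph(W)$. The chordal and geodesic distances on $\Sph(W)$ satisfy $\|x-y\|=2\sin(\operatorname{dist}_{\mathrm s}(x,y)/2)$, so they are uniformly equivalent, with $\tfrac{2}{\pi}\operatorname{dist}_{\mathrm s}\leq\|x-y\|\leq\operatorname{dist}_{\mathrm s}$. Hence it is enough to compare $d(C\cap\overline B_W,D\cap\overline B_W)$ with the chordal Hausdorff distance $d(K,L)$. I will prove
\[
 d(K,L)\leq 2\,d_{\mathrm c}(C,D)\quad\text{and}\quad d_{\mathrm c}(C,D)\leq d(K,L).
\]

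For the second inequality, take $tx\in C\cap\overline B_W$ with $x\in K$ and $0\leq t\leq1$. Choose $y\in L$ with $\|x-y\|\leq d(K,L)$. Then $ty\in D\cap\overline B_W$ and
\[
 \|tx-ty\|\leq d(K,L).
\]
The point $0$ lies in both truncated cones, so it causes no difficulty. By symmetry in $C$ and $D$, this gives $d_{\mathrm c}(C,D)\leq d(K,L)$.

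For the first inequality, take $x\in K$ and put $\varepsilon=d_{\mathrm c}(C,D)$. There is $z\in D\cap\overline B_W$ with $\|x-z\|\leq\varepsilon$. If $\varepsilon\geq1$, then $d(K,L)\leq2\leq2\varepsilon$ trivially. Otherwise $\|z\|\geq1-\varepsilon>0$, and the normalized vector $y=z/\|z\|$ lies in $L$. By the triangle inequality,
\[
 \|x-y\|\leq\|x-z\|+\bigl\|z-z/\|z\|\bigr\|=\|x-z\|+\bigl(1-\|z\|\bigr)\leq2\varepsilon .
\]
Symmetrizing in $K$ and $L$ gives $d(K,L)\leq2\,d_{\mathrm c}(C,D)$.

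\emph{Conclusion.} Both maps are therefore Lipschitz on the nonempty and nonzero parts. Together with the isolation of $\varnothing$ and $\{0\}$, this shows that the link map is a homeomorphism. The only point needing care is the normalization step $z\mapsto z/\|z\|$ near the origin, and the case split at $\varepsilon\geq1$ disposes of it.
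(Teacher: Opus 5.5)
Your proof is correct. The paper does not prove this lemma; it only quotes Lemma~2.1 of Wang and Wu, so there is no argument in the paper to compare yours against step by step. Your direct metric comparison is therefore a complete, self-contained proof.

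Your argument also gives more than a homeomorphism. Combine your two inequalities with the chordal--geodesic comparison. For nonzero closed convex cones $C,D$ with links $K,L$ this yields
\[
 d_{\mathrm c}(C,D)\leq d_{\mathrm s}(K,L)\leq\pi\,d_{\mathrm c}(C,D).
\]
So the link map is bi-Lipschitz away from the isolated points $\{0\}$ and $\varnothing$. The paper's citation buys brevity, while your route makes the lemma quantitative at the cost of a few lines.

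Two small steps deserve explicit mention in a write-up, though both are routine:
\begin{itemize}
\item The pointwise bounds $\tfrac2\pi\operatorname{dist}_{\mathrm s}(x,y)\leq\|x-y\|\leq\operatorname{dist}_{\mathrm s}(x,y)$ pass to the Hausdorff distances with the same constants. This follows by taking infima over one set and suprema over the other.
\item The nearest points $y\in L$ and $z\in D\cap\overline B_W$ that you choose exist because these sets are compact.
\end{itemize}
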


\subsection{Valuations and quermassintegrals}

Let $\mathcal A$ be one of $\Ks(\Sph(W))$, $\Ksp(\Sph(W))$,
$\Ps(\Sph(W))$, or $\Psp(\Sph(W))$.  A real-valued \emph{valuation} on
$\mathcal A$ is a map
$\mu:\mathcal A\to\R$ such that $\mu(\varnothing)=0$ and
\[
 \mu(K)+\mu(L)=\mu(K\cup L)+\mu(K\cap L),
\]
whenever $K,L,K\cup L\in\mathcal A$; in this case
$K\cap L\in\mathcal A$ automatically.  For a subgroup $G\leq\Orth(W)$, the
valuation is \emph{$G$-invariant} if $\mu(gK)=\mu(K)$ for every $g\in G$ and
$K\in\mathcal A$.  It is \emph{simple} on $\Sph(W)$ if it vanishes on every
nonempty $K\in\mathcal A$ with $\dim K<\dim\Sph(W)$.

We call $\mu$ \emph{monotone on nonempty sets} if
\[
 \mu(K)\leq\mu(L)\qquad
 \text{whenever }K,L\in\mathcal A\text{ and }\varnothing\neq K\subseteq L,
\]
and simply \emph{monotone} if $\mu(K)\leq\mu(L)$ is required whenever
$K\subseteq L$, including $K=\varnothing$.  In particular, monotonicity includes
$0=\mu(\varnothing)\leq\mu(L)$.

We denote spherical Lebesgue measure on $\Sph^n$ by $\sigma_n$ and set
$\omega_{n+1}\coloneqq\sigma_n(\Sph^n)$.  The pushforward of $\sigma_n$ to $\Sph(W)$ under
any linear isometry $\R^{n+1}\to W$ is independent of the isometry and is again
denoted by $\sigma_n$.  Therefore, $\omega_{n+1}^{-1}\sigma_n$ is the invariant
probability measure on $\Sph(W)$.
Throughout, a completed Borel measure is the extension to the $\sigma$-algebra
generated by the Borel sets and all subsets of Borel null sets; measurability
is understood with respect to the completed measure.  For a
bounded real-valued function $f$ on a set $X$, write
$\|f\|_\infty\coloneqq\sup_{x\in X}|f(x)|$.

We define spherical quermassintegrals from conic intrinsic volumes using
Schneider's normalization.  For a polyhedral cone $C\subseteq W$,
write $\mathcal F_k(C)$ for the set of its $k$-dimensional faces and
$\operatorname{relint}F$ for the relative interior of a face $F$.  For
$x\in W$, the \emph{Euclidean metric projection} $\pi_C(x)$ is the unique point of $C$
satisfying
\[
 \|x-\pi_C(x)\|=\min_{y\in C}\|x-y\|.
\]
When $C$ is a linear subspace $L$, $\pi_L$ is orthogonal projection onto $L$.
Existence and uniqueness follow from the metric projection theorem for closed
Euclidean convex sets \cite[Section~1.2]{SchneiderConvexBodies}.
For a centered Gaussian random vector $g$ in $W$ with identity covariance,
the $k$th \emph{conic intrinsic volume} of $C$ is
\begin{equation}\label{eq:conic-gaussian}
 v_k(C)\coloneqq\mathbb P\bigl(\pi_C(g)\in\operatorname{relint}F
 \text{ for some }F\in\mathcal F_k(C)\bigr),
 \qquad 0\leq k\leq n+1.
\end{equation}
This is the probabilistic definition of conic intrinsic volumes
\cite[Equations~(2.28)--(2.30)]{Schneider}; see also the combinatorial
account in \cite[Section~2.2]{AmelunxenLotz}.
The conic intrinsic volumes extend uniquely and continuously from polyhedral
cones to all closed convex cones \cite[Section~4.2, p.~152]{Schneider}, with respect
to the conic Hausdorff topology.  We retain the notation $v_k$ for these
extensions.  Each $v_k$ is a
nonnegative, $\Orth(W)$-invariant valuation, and
\begin{equation}\label{eq:conic-total-mass}
 \sum_{k=0}^{n+1}v_k(C)=1.
\end{equation}
The valuation property, nonnegativity, orthogonal invariance, and
\eqref{eq:conic-total-mass} are collected in
\cite[Theorem~2.3.3 and Section~4.2]{Schneider}.

For $K\in\Ks(\Sph(W))\setminus\{\varnothing\}$, define its
\emph{spherical intrinsic volumes} by
\begin{equation}\label{eq:spherical-intrinsic-def}
 v_j^{\mathrm s}(K)\coloneqq v_{j+1}(K^\vee)\qquad(0\leq j\leq n),
\end{equation}
and set $v_j^{\mathrm s}(\varnothing)\coloneqq0$.  For a closed convex cone
$C\subseteq W$ and $K\in\Ks(\Sph(W))\setminus\{\varnothing\}$, set
\begin{equation}\label{eq:Udef}
 U_j(C)\coloneqq\sum_{k\geq0}v_{j+1+2k}(C),\qquad
 U_j(K)\coloneqq U_j(K^\vee),\qquad 0\leq j\leq n,
\end{equation}
where terms with index greater than $n+1$ are omitted.  Set
$U_j(\varnothing)\coloneqq0$.  The quantities $U_j(C)$ and $U_j(K)$ are
called the $j$th \emph{Grassmann angle} and the $j$th \emph{spherical
quermassintegral}, respectively; see
\cite[Sections~2.3 and~3.3]{Schneider}.

The conic and spherical intrinsic volumes and the quantities $U_j$ are
invariant under isometric enlargement of the ambient Euclidean space.
Orthogonal coordinates identify such an embedding of $C$
with $C\times\{0\}$.  Interpret intrinsic volumes outside their
dimensional range as zero.  For polyhedral cones $C,D$, the product formula
\cite[Equation~(2.34)]{Schneider} is
\[
 v_k(C\times D)=\sum_{r+s=k}v_r(C)v_s(D).
\]
Taking $D=\{0\}$, with $v_0(\{0\})=1$ and $v_s(\{0\})=0$ for $s>0$,
we obtain
\[
 v_k(C\times\{0\})=v_k(C).
\]
The same identity holds for all closed convex cones by continuity, since
$d_{\mathrm c}(C\times\{0\},D\times\{0\})=d_{\mathrm c}(C,D)$.
Definitions~\eqref{eq:spherical-intrinsic-def} and~\eqref{eq:Udef}
imply the same invariance for $v_j^{\mathrm s}$ and $U_j$.

For $0\leq k\leq\dim W$, $\Gr(W,k)$ denotes the Grassmannian of
$k$-dimensional linear subspaces of $W$, with the topology determined by
operator-norm convergence of the orthogonal projections $\pi_L$.
The definitions of $v_j^{\mathrm s}$ and $U_j$ follow
\cite[Equations~(2.68), (2.70), and~(3.20)]{Schneider}.
The spherical intrinsic volumes $v_j^{\mathrm s}$ are denoted by $V_j$ in
\cite[Section~2.3]{WangWu}.  Write $\nu_{n+1-j}$ for the
$\Orth(W)$-invariant probability measure on $\Gr(W,n+1-j)$.  If
$K\in\Ks(\Sph(W))\setminus\{\varnothing\}$ and $K^\vee$ is not a linear
subspace, then
\begin{equation}\label{eq:Ucrofton}
 U_j(K)=\frac12\int_{\Gr(W,n+1-j)}
 \one_{\{K\cap L\neq\varnothing\}}\dd\nu_{n+1-j}(L),
 \qquad 0\leq j\leq n.
\end{equation}
Formula~\eqref{eq:Ucrofton} is the conic Crofton formula
\cite[Equations~(2.43) and~(4.75)]{Schneider}.
For $n=0$, $K$ is a singleton and both sides equal $1/2$.

In particular, \eqref{eq:Ucrofton} holds for every nonempty proper $K$.
On proper spherical convex sets, the Euler characteristic
\cite[Section~3.3, p.~123]{Schneider} is the valuation
\[
 \chi_{\mathrm s}:\Ksp(\Sph(W))\to\R,\qquad
 \chi_{\mathrm s}(K)=
 \begin{cases}0,&K=\varnothing,\\1,&K\neq\varnothing.\end{cases}
 \qquad U_0=\tfrac12\chi_{\mathrm s}.
\]
The subscript distinguishes this spherical valuation from the hyperbolic
Euler valuation $\chi$ on $\K(\mathbb H^n)$.
The pushforward of $\omega_{n+1}^{-1}\sigma_n$ under $u\mapsto\R u$
is $\nu_1$, since it is an $\Orth(W)$-invariant probability measure on
$\Gr(W,1)$.  For nonempty proper $K$, a line with unit direction $u$ meets $K$ exactly
when $u\in K\cup(-K)$.  Applying \eqref{eq:Ucrofton} with $j=n$ and using
$K\cap(-K)=\varnothing$, we obtain
\[
 U_n(K)=\frac{\sigma_n(K\cup(-K))}{2\omega_{n+1}}
       =\frac{\sigma_n(K)}{\omega_{n+1}}.
\]
For $K\subseteq L$, the hitting indicators in \eqref{eq:Ucrofton} satisfy
$\one_{\{K\cap E\neq\varnothing\}}\leq\one_{\{L\cap E\neq\varnothing\}}$
for every $E\in\Gr(W,n+1-j)$.  Integration proves monotonicity on
nonempty proper sets.  Since $U_j\geq0$ and $U_j(\varnothing)=0$,
comparisons with the empty set satisfy the same inequality.

For every closed convex cone $C$, $v_k(C)=0$ when $k>\dim C$ \cite[Section~2.3]{WangWu}.  Thus
\begin{equation}\label{eq:dimension-vanishing}
 v_j^{\mathrm s}(K)=U_j(K)=0
 \qquad(K\in\Ks(\Sph(W)),\ \dim K<j\leq n)
\end{equation}
by \eqref{eq:spherical-intrinsic-def} and~\eqref{eq:Udef}, including $K=\varnothing$. If $P$ is a $j$-dimensional proper spherically convex set, then $P^\vee$ has nonempty relative interior in its $(j+1)$-dimensional linear hull. Consequently, $v_j^{\mathrm s}(P)=v_{j+1}(P^\vee)>0$ \cite[Equation~(2.32)]{Schneider}. If $E\subseteq W$ is a $k$-dimensional linear subspace, $1\leq k\leq n+1$, then $v_{k-1}^{\mathrm s}(\Sph(E))=1$ and $v_j^{\mathrm s}(\Sph(E))=0$ for $j\neq k-1$ \cite[Equation~(2.31)]{Schneider}.  With the convention $U_{n+1}=U_{n+2}=0$, \eqref{eq:Udef} yields
\begin{equation}\label{eq:U-v}
U_j(K)=v_{j+1}(K^\vee)+v_{j+3}(K^\vee)+\cdots,\qquad v_j^{\mathrm s}(K)=U_j(K)-U_{j+2}(K), \qquad 0\leq j\leq n.
\end{equation}
\subsection{The Cauchy equation and point normalization}

The one-dimensional spherical arguments reduce to additive functions of
arc length.  The following lemma derives linearity from a one-sided bound.

\begin{lemma}\label{lem:cauchy}
Let $a\in(0,\infty]$ and $f:(0,a)\to\R$ satisfy
\begin{equation}\label{eq:local-cauchy}
 f(s+t)=f(s)+f(t)\qquad(s,t,s+t\in(0,a)).
\end{equation}
If, for some nondegenerate interval $J\subseteq(0,a)$ and $M\in\R$, either
$f(t)\leq M$ for every $t\in J$ or $f(t)\geq M$ for every $t\in J$, then
$f(t)=ct$ for a unique $c\in\R$ and every $t\in(0,a)$.
\end{lemma}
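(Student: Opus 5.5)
The plan is to reduce to the classical fact that a Cauchy-additive function on all of $(0,\infty)$, or on an additive group, which is bounded on one side on some interval is linear. There are three steps: extend $f$ to an additive function on $\R$, transfer the one-sided bound to a neighbourhood of $0$, and then force linearity.

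\emph{Step 1: extension.} We first treat the case $a<\infty$. For $x\in(0,\infty)$ pick $m\in\mathbb N$ with $x/m\in(0,a)$. Repeated use of \eqref{eq:local-cauchy} shows that $f(t/m)=f(t)/m$ whenever $t\in(0,a)$; the partial sums $kt/m$ stay below $a$ for $k\leq m$. Hence
\[
 F(x)\coloneqq m f(x/m)
\]
is independent of the choice of $m$: compare $m$ and $m'$ through $mm'$. The function $F$ agrees with $f$ on $(0,a)$ and is additive on $(0,\infty)$. To see additivity, choose one common $m$ so that $x/m$, $y/m$, and $(x+y)/m$ all lie in $(0,a)$, and apply \eqref{eq:local-cauchy}. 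Finally set $F(0)=0$ and $F(-x)=-F(x)$, which gives an additive function on $\R$.

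\emph{Step 2: bound at the origin.} Replacing $f$ by $-f$ and $M$ by $-M$ if necessary, we may assume $f\leq M$ on $J$. Pick a compact subinterval $[\alpha,\beta]\subseteq J$ with $\alpha<\beta$, and let $t_0\in(\alpha,\beta)$ be its midpoint. For $|h|<\delta\coloneqq(\beta-\alpha)/2$ we have $t_0+h\in J$, so
\[
 F(h)=F(t_0+h)-F(t_0)\leq M-f(t_0).
\]
Thus $F$ is bounded above on $(-\delta,\delta)$. By oddness, $-F(h)=F(-h)$ is also bounded above there, so $F$ is bounded on $(-\delta,\delta)$.

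\emph{Step 3: linearity.} Set $c\coloneqq F(1)$. Additivity gives $F(q)=cq$ for all rationals $q$. The function $G(x)\coloneqq F(x)-cx$ is additive, vanishes on $\mathbb Q$, and satisfies $|G|\leq B$ on $(-\delta,\delta)$. For any $x$ and any $N\in\mathbb N$, choose a rational $q$ with $|N x-q|<\delta$. Then
\[
 N\,|G(x)|=|G(Nx)|=|G(Nx-q)|\leq B,
\]
and letting $N\to\infty$ gives $G(x)=0$. Hence $f(t)=ct$ on $(0,a)$.

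\emph{Uniqueness and the main difficulty.} The constant $c$ is unique because it is recovered as $f(t)/t$ at any single $t\in(0,a)$. I expect the only delicate point to be Step 1, namely checking that the extension is well defined and additive when the domain is the truncated interval $(0,a)$. The scaling identity $f(t/m)=f(t)/m$ reduces this to bookkeeping. When $a=\infty$, Step 1 is trivial.
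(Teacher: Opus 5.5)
Your proof is correct. It uses the same two classical ingredients as the paper. The first is a two-sided bound near the origin, obtained by subtracting the value at an interior point of $J$. The second is a scaling argument that upgrades boundedness to linearity.

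The difference is in the organization. You first extend $f$ to an additive function $F$ on $\R$; the well-definedness via $mm'$ and the odd extension are fine. After that, Step~2 can use $F(t_0+h)-F(t_0)$ for $h$ of both signs, and Step~3 is the textbook argument $N|G(x)|=|G(Nx-q)|$ with $G(x)=F(x)-cx$. (Strictly, the bound for $G$ on $(-\delta,\delta)$ is $\sup|F|+|c|\delta$ rather than the bound for $F$, which is harmless.)

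The paper never leaves $(0,a)$. For $0<h<\delta$ it bounds $f(h)=f(x_0+h)-f(x_0)$ from above and $f(h)=f(x_0)-f(x_0-h)$ from below, and this second identity replaces your use of oddness. It then gets $f(h)\to0$ as $h\to0^+$ from $|f(h)|=m^{-1}|f(mh)|$. It concludes by writing $f(t)=q_kf(t_0)+f(t-q_kt_0)$ with rationals $q_k\uparrow t/t_0$.

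The paper's route is shorter: it needs no extension bookkeeping, only the observation that the relevant partial sums stay inside $(0,a)$. Yours pays that cost once up front. In exchange, it reduces the lemma cleanly to the standard fact about additive functions on $\R$, so the truncated domain never has to be tracked again.
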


\begin{proof}
We first prove that $f(h)\to0$ as $h\to0^+$.
Replacing $(f,M)$ by $(-f,-M)$ if necessary, assume that $f\leq M$ on $J$.
Choose $x_0$ and $\delta>0$ such that $[x_0-\delta,x_0+\delta]\subseteq J$.
By \eqref{eq:local-cauchy}, for $0<h<\delta$,
\[
 f(h)=f(x_0+h)-f(x_0)\leq M-f(x_0),\qquad f(h)=f(x_0)-f(x_0-h)\geq f(x_0)-M.
\]
Put $B=M-f(x_0)\geq0$.  For a positive integer $m$ and $0<h<\delta/m$,
all partial sums $h,2h,\ldots,mh$ lie in $(0,\delta)$, so
\[
 0<h<\delta/m
 \quad\Longrightarrow\quad
 |f(h)|=m^{-1}|f(mh)|\leq B/m.
\]
Thus $f(h)\to0$ as $h\to0^+$.

Fix $t_0\in(0,a)$.  For positive integers $p,r$ with $pt_0/r<a$,
repeated additivity gives
\[
 f(pt_0/r)=p f(t_0/r)=\frac p r f(t_0).
\]
For $t\in(0,a)$, choose $q_k\in\mathbb Q\cap(0,t/t_0)$ with
$q_k\to t/t_0$, so $t-q_kt_0\to0^+$.
Since $f$ is additive and $f(h)\to0$ as $h\to0^+$,
\[
 f(t)=q_kf(t_0)+f(t-q_kt_0)\longrightarrow\frac{t}{t_0}f(t_0).
\]
Thus $f(t)=ct$ for every $t\in(0,a)$, with the unique coefficient
$c=f(t_0)/t_0$.
\end{proof}

We separate the Euler term by subtracting the common point value.
For $n\geq1$ and an $\SO(n+1)$-invariant valuation
$\mu:\Ksp(\Sph^n)\to\R$ that is monotone on nonempty sets, fix
$p_0\in\Sph^n$ and define $\mu^\circ(\varnothing)\coloneqq0$ and
$\mu^\circ(K)\coloneqq\mu(K)-\mu(\{p_0\})$ for $K\neq\varnothing$.

\begin{lemma}\label{lem:normalize}
The function $\mu^\circ$ is an $\SO(n+1)$-invariant monotone valuation
that vanishes on points and is independent of the choice of $p_0$.
\end{lemma}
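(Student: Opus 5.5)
First I show that $\mu(\{p\})$ does not depend on the point $p\in\Sph^n$. Since $n\geq1$, the group $\SO(n+1)$ acts transitively on $\Sph^n$, so for any $p,p_0\in\Sph^n$ there is $g\in\SO(n+1)$ with $gp_0=p$. Singletons are nonempty proper closed spherical convex sets, hence lie in $\Ksp(\Sph^n)$. By invariance, $\mu(\{p\})=\mu(\{p_0\})$. In particular, the constant subtracted in the definition of $\mu^\circ$ is the same for every choice of $p_0$, so $\mu^\circ$ does not depend on $p_0$.

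Next I check the valuation property. By definition $\mu^\circ=\mu-\mu(\{p_0\})\chi_{\mathrm s}$, where $\chi_{\mathrm s}$ is the Euler valuation on $\Ksp(\Sph^n)$ recorded in the preliminaries. It remains to confirm that $\chi_{\mathrm s}$ satisfies the inclusion--exclusion identity. Let $K,L,K\cup L\in\Ksp(\Sph^n)$.
\begin{itemize}
\item If $K\cap L\neq\varnothing$, all four sets are nonempty and both sides equal $2$.
\item If exactly one of $K,L$ is empty, the union is nonempty and both sides equal $1$.
\item If both are empty, both sides equal $0$.
\item It remains to exclude $K,L$ nonempty with $K\cap L=\varnothing$. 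In that case $K\cup L$ would be a disconnected set, so it is not spherically convex when it is proper. Indeed, a proper closed spherical convex set is the gnomonic image of a compact Euclidean convex set, hence connected.
\end{itemize}
Therefore $\mu^\circ$ is a valuation, and $\mu^\circ(\varnothing)=0$ by definition. It vanishes on points by construction. It is $\SO(n+1)$-invariant because both $\mu$ and $\chi_{\mathrm s}$ are, and $g$ maps nonempty sets to nonempty sets.

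For monotonicity, first take $\varnothing\neq K\subseteq L$. Then $\mu^\circ(L)-\mu^\circ(K)=\mu(L)-\mu(K)\geq0$ by monotonicity of $\mu$ on nonempty sets. For the comparison with the empty set, take $L\neq\varnothing$ and pick $p\in L$. Then $\{p\}\subseteq L$, and monotonicity on nonempty sets gives
\[
\mu^\circ(L)=\mu(L)-\mu(\{p\})\geq0=\mu^\circ(\varnothing),
\]
using the point-independence established in the first step. Hence $\mu^\circ$ is monotone on all of $\Ksp(\Sph^n)$, including comparisons with $\varnothing$.

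The only step needing care is the connectedness argument that rules out disjoint nonempty $K,L$ with proper convex union. The rest is formal.
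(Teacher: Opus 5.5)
Your proof is correct and follows the paper's argument. Transitivity gives point-independence, $\mu^\circ=\mu-\mu(\{p_0\})\chi_{\mathrm s}$ is an invariant valuation, and monotonicity including comparisons with $\varnothing$ comes from comparing with a point $p\in L$. The one addition is your explicit connectedness check that $\chi_{\mathrm s}$ satisfies the valuation identity on $\Ksp(\Sph^n)$; the paper takes this as known from the preliminaries.
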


\begin{proof}
Since $\SO(n+1)$ acts transitively on $\Sph^n$, the value
$\mu(\{p_0\})$ is independent of $p_0$.  The Euler valuation
$\chi_{\mathrm s}$ is invariant, so
$\mu^\circ=\mu-\mu(\{p_0\})\chi_{\mathrm s}$ is an invariant valuation
vanishing on points.  To check monotonicity, take
$\varnothing\neq K\subseteq L$ and $p\in K$.  Then
\[
 \mu^\circ(L)-\mu^\circ(K)=\mu(L)-\mu(K)\geq0,
 \qquad
 \mu^\circ(K)-\mu^\circ(\varnothing)=\mu(K)-\mu(\{p\})\geq0.\qedhere
\]
\end{proof}

\section{Measurability of apex functions}\label{sec:kernels}

We establish the measurability of the apex functions used in the
signed-coning and averaging arguments.

Let $n\geq0$ and $W$ be an $(n+1)$-dimensional Euclidean space.
Denote by $\mathcal M_n(\Sph(W))$ the set of bounded real-valued
functions on the $n$-dimensional spherical polytopes in $\Sph(W)$
that are monotone under inclusion.

For an open set $D\subseteq\R^m$ and a Borel measurable function
$w:D\to(0,\infty)$, define
$\nu(A)\coloneqq\int_Aw(z)\dd z$ for every Borel set $A\subseteq D$.
Writing $|A|$ for Lebesgue measure, we have, for every such $A$,
\cite[Proposition~2.16]{Folland}
\begin{equation}\label{eq:positive-density-nullsets}
 \nu(A)=0\quad\Longleftrightarrow\quad |A|=0.
\end{equation}
Consequently, the completions of these Borel measures have the same
measurable sets \cite[Theorem~1.9]{Folland}.  A set $E\subseteq D$ belongs
to this common completed $\sigma$-algebra precisely when
$E\triangle B\subseteq Z$ for Borel sets $B,Z\subseteq D$ with $|Z|=0$,
where $E\triangle B=(E\setminus B)\cup(B\setminus E)$.

The next lemma follows from the cone-monotone measurability theorem of
Borwein, Burke, and Lewis, in the formulation for functions on open subsets in
\cite[p.~1069]{BBL}.  Their Gaussian measurability conclusion implies
Lebesgue measurability by \eqref{eq:positive-density-nullsets}, since the
standard Gaussian measure on $\R^n$ has density
$(2\pi)^{-n/2}e^{-\|z\|^2/2}>0$.

\begin{lemma}[{\cite[Corollary~7]{BBL}}]\label{lem:bbl}
For $n\geq1$, let $C\subseteq\R^n$ be a convex cone with nonempty interior,
and $D\subseteq\R^n$ be open.  If $f:D\to\R$ satisfies
\[
 f(x)\leq f(y)\qquad(x,y\in D,\ y-x\in C),
\]
then $f$ is Lebesgue measurable.
\end{lemma}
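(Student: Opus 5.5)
I would prove this directly rather than go through the Gaussian-measure argument of \cite{BBL}. The idea is to show that every strict sublevel set of $f$ differs from an open set by a Lebesgue null set.

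Fix $t\in\R$ and put $E\coloneqq\{x\in D:f(x)<t\}$. By hypothesis, if $y\in E$, $x\in D$ and $y-x\in C$, then $f(x)\le f(y)<t$, so $x\in E$. Thus $E$ is downward closed with respect to $C$ inside $D$, and $D\setminus E$ is upward closed.

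Let $\partial E$ denote the boundary of $E$ relative to $D$. This set is relatively closed in $D$, hence Borel. Since $E=\operatorname{int}E\cup(E\cap\partial E)$, it suffices to prove $|\partial E|=0$: then $E$ is an open set plus a subset of a Borel null set, hence Lebesgue measurable. Measurability of all sets $\{f<t\}$ gives measurability of $f$.

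Fix $u\in\operatorname{int}C$ and take $z\in\partial E$. The key claim is that for all small $s>0$,
\[
 z-su\in\operatorname{int}E,\qquad z+su\in\operatorname{int}(D\setminus E).
\]

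For the first part, choose $y_k\in E$ with $y_k\to z$. For $w$ near $z-su$ and $k$ large, the vector $y_k-w$ is close to $su$, so it lies in $\operatorname{int}C$ because $su$ is an interior point and $C$ is a cone. By downward closedness, $w\in E$ whenever $w\in D$. Hence a whole neighbourhood of $z-su$ lies in $E$, once $s$ is small enough that the segment stays in $D$.

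For the second part, $z\notin\operatorname{int}E$ gives points $w_k\in D\setminus E$ with $w_k\to z$. The same perturbation argument, now using upward closedness of $D\setminus E$, shows that a neighbourhood of $z+su$ avoids $E$.

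Now restrict to a line $\ell=a+\R u$. The set $\ell\cap D$ is a countable union of open intervals. On each such interval $I$, the claim shows that every point of $\partial E\cap I$ has only $E$-points just before it and only non-$E$-points just after it. I expect this step to need the most care, because the claim only gives points of $E$ arbitrarily close below and non-points arbitrarily close above, and these must be turned into a global ordering.

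Here is how I would make that precise. Parametrize $I$ by $s\mapsto a+su$. Suppose $z_1<z_2$ are both in $\partial E\cap I$. Then $z_1+\epsilon u\notin E$ for small $\epsilon>0$, and $z_2-\epsilon u\in E$ for small $\epsilon>0$. But $(z_2-\epsilon u)-(z_1+\epsilon u)$ is a positive multiple of $u$, so it lies in $\operatorname{int}C$ once $\epsilon$ is small. Downward closedness then forces $z_1+\epsilon u\in E$, a contradiction. Hence $\partial E\cap I$ has at most one point, and every line parallel to $u$ meets $\partial E$ in a countable set.

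Choose coordinates with $u$ as one axis and apply Fubini--Tonelli to the Borel set $\partial E$. Every section in the $u$-direction is countable, hence one-dimensionally null, so $|\partial E|=0$. This completes the proof.
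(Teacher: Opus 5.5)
Your argument is correct, and it takes a genuinely different route from the paper. The paper gives no proof of its own. It invokes Borwein--Burke--Lewis, whose result is formulated for cone-monotone functions on open subsets of a separable Banach space and yields measurability with respect to Gaussian measure. It then transfers this to Lebesgue measurability via \eqref{eq:positive-density-nullsets}: the standard Gaussian has strictly positive density, so it has the same null sets and the same completed $\sigma$-algebra as Lebesgue measure.

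You instead work directly in $\R^n$. Each strict sublevel set $E$ is downward closed with respect to $C$ inside $D$. Hence $E$ differs from the open set $\operatorname{int}E$ by a subset of the relatively closed, hence Borel, set $\partial E$. Your key claim shows that $\partial E$ meets every line in a direction $u\in\operatorname{int}C$ in a countable set, so Fubini gives $|\partial E|=0$. All steps check out; in particular, the neighbourhoods of $z\mp su$ can be taken of radius proportional to $s$, since $\operatorname{int}C$ is invariant under positive scaling.

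Your two-point argument never uses that the segment between $z_1$ and $z_2$ lies in $D$, because the hypothesis compares arbitrary $x,y\in D$ with $y-x\in C$. So $\partial E$ actually meets each such line in at most one point, and splitting $\ell\cap D$ into intervals is unnecessary. This is essentially the classical fact that the boundary of a cone-lower set is a graph over $u^\perp$.

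What your route buys is a self-contained, elementary proof. It avoids both the infinite-dimensional machinery and the Gaussian-to-Lebesgue transfer, and it gives slightly more structure: every sublevel set is open up to a Borel null set. The cited route buys generality to separable Banach spaces, where no Lebesgue measure or Fubini argument is available; the paper does not need that generality.
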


Fix a codimension-one linear subspace $U$ of $W$.  For
$x\in\Sph(W)\setminus\Sph(U)$ and $Q\in\Ks(\Sph(U))$, define
\[
 x*Q\coloneqq(Q^\vee+\R_{\geq0}x)\cap\Sph(W).
\]
We call $x*Q$ the spherical cone with apex $x$ and base $Q$
\cite[Section~7.1]{WangWu}.
The linear isomorphism
$U\times\R\to W$, $(u,t)\mapsto u+tx$, maps
$Q^\vee\times[0,\infty)$ onto $Q^\vee+\R_{\geq0}x$.
Thus this cone is closed and convex, and is polyhedral whenever
$Q^\vee$ is polyhedral.  Its linear span has dimension
$\dim\operatorname{span}Q^\vee+1$, so
\begin{equation}\label{eq:coning-dimension}
 \dim(x*Q)=\dim Q+1,
\end{equation}
including $Q=\varnothing$.
For $Q\neq\varnothing$, an equivalent expression is
\begin{equation}\label{eq:spherical-cone-param}
 x*Q=\left\{
 \frac{(1-t)y+tx}{\|(1-t)y+tx\|}:y\in Q,\ 0\leq t\leq1
 \right\}.
\end{equation}
Indeed, $Q^\vee=\{ay:a\geq0,\ y\in Q\}$, and the substitution
$t=b/(a+b)$ for $a,b\geq0$ with $a+b>0$ identifies the directions of
$ay+bx$ with those in \eqref{eq:spherical-cone-param}.
The denominator is nonzero because $x\notin U$ and $y\in\Sph(U)$.
For fixed $y\in Q$, the normalized vector in \eqref{eq:spherical-cone-param}
traces the shorter great-circle arc from $y$ to $x$ as $t$ ranges over $[0,1]$.
Thus, $x*Q$ is the union of these arcs.
For $Q=\varnothing$, the convention $\varnothing^\vee=\{0\}$ instead
implies $x*\varnothing=\{x\}$.

The argument of Wang and Wu \cite[Lemma~7.1]{WangWu} proves
the following lemma for $\dim W\geq2$.
If $\dim W=1$, then $U=\{0\}$, $Q=\varnothing$, and $x*Q=\{x\}$.
The map $(x,Q)\mapsto x*Q$ is continuous because
$\Sph(W)\times\{\varnothing\}$ is discrete.

\begin{lemma}[{\cite[Lemma~7.1]{WangWu}}]\label{lem:coning-continuity}
The map
\[
 (\Sph(W)\setminus\Sph(U))\times\Ks(\Sph(U))\to\Ks(\Sph(W)),
\qquad (x,Q)\mapsto x*Q,
\]
is continuous.
\end{lemma}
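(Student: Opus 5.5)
The plan is to prove continuity at a point $(x_0,Q_0)$ by handling $Q_0=\varnothing$ and $Q_0\neq\varnothing$ separately. For $Q_0=\varnothing$, since $\varnothing$ is isolated in $\Ks(\Sph(U))$, any nearby pair has $Q=\varnothing$. Then $x*Q=\{x\}$, and the spherical Hausdorff distance between $\{x\}$ and $\{x_0\}$ is $\operatorname{dist}_{\mathrm s}(x,x_0)$. So continuity holds in this case, and the case $\dim W=1$ is included in it.

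For nonempty $Q_0$, I would use the parametrization \eqref{eq:spherical-cone-param}. This writes $x*Q$ as the image of $Q\times[0,1]$ under
\[
F_x(y,t)=\frac{(1-t)y+tx}{\|(1-t)y+tx\|}.
\]
The first step is a uniform lower bound for the denominator. For $y\in\Sph(U)$, write $x=\langle x,e\rangle e+x_U$, where $e$ is a unit normal to $U$. Then
\[
\|(1-t)y+tx\|\geq t|\langle x,e\rangle|,
\]
and also
\[
\|(1-t)y+tx\|\geq \|(1-t)y+tx_U\|\geq (1-t)-t\|x_U\|.
\]
Near $x_0$, both $|\langle x,e\rangle|$ and $1-\|x_U\|$ are bounded below by a constant $c>0$. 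Combining the two bounds gives a positive lower bound $\eta$ on the denominator, uniformly in $t\in[0,1]$, $y\in\Sph(U)$, and $x$ in a neighbourhood of $x_0$. Consequently $F$ is uniformly continuous, indeed Lipschitz with some constant $L$, on $(x,y,t)$ in that neighbourhood times $\Sph(U)\times[0,1]$.

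The second step is the Hausdorff estimate. Given a point $F_x(y,t)\in x*Q$, choose $y_0\in Q_0$ with $\|y-y_0\|\leq d(Q,Q_0)$; here the Euclidean and spherical Hausdorff distances are comparable on the unit sphere. Then
\[
\|F_x(y,t)-F_{x_0}(y_0,t)\|\leq L\bigl(\|x-x_0\|+\|y-y_0\|\bigr).
\]
Doing the same with the roles of $(x,Q)$ and $(x_0,Q_0)$ exchanged bounds $d(x*Q,x_0*Q_0)$ by $L'\bigl(\|x-x_0\|+d_{\mathrm s}(Q,Q_0)\bigr)$. Since $\operatorname{arccos}$ is comparable to chord length, this gives continuity in $d_{\mathrm s}$.

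The main obstacle is the uniform denominator bound. The shortest arc from $y$ to $x$ must stay away from the antipodal degeneracy, and this relies on $x\notin\Sph(U)$ together with $y\in U$. The two-case estimate above handles it. If it proves messy, I would instead use the linear isomorphism $U\times\R\to W$ and \autoref{lem:link-homeomorphism}. That route shows that $Q^\vee\times[0,\infty)$, mapped by a linear isomorphism depending continuously on $x$, varies continuously in $d_{\mathrm c}$. It then uses the fact that linear isomorphisms near a fixed one act continuously on closed cones in the conic Hausdorff metric.
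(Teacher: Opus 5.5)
Your proof is correct. The paper gives no independent proof for $\dim W\geq2$: it invokes the argument of Wang and Wu \cite[Lemma~7.1]{WangWu}. For $\dim W=1$ it notes only that $\Ks(\Sph(U))=\{\varnothing\}$ and $x*\varnothing=\{x\}$, so the domain is discrete. That coincides with your empty-base case, where isolation of $\varnothing$ reduces everything to $d_{\mathrm s}(\{x\},\{x_0\})=\operatorname{dist}_{\mathrm s}(x,x_0)$.

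For nonempty bases you replace the citation with a self-contained estimate. The orthogonal splitting $(1-t)y+tx=t\langle x,e\rangle e+\bigl((1-t)y+tx_U\bigr)$ gives $\|(1-t)y+tx\|\geq\max\{tc,\,1-t(2-c)\}\geq c/2$, so normalization is Lipschitz. The parametrization \eqref{eq:spherical-cone-param} then yields a local bound $d_{\mathrm s}(x*Q,x_0*Q_0)\leq L'\bigl(\operatorname{dist}_{\mathrm s}(x,x_0)+d_{\mathrm s}(Q,Q_0)\bigr)$. Isolation of $\varnothing$ also guarantees $Q\neq\varnothing$ whenever $Q$ is near $Q_0\neq\varnothing$, so the parametrization applies to both sets; you should say this explicitly.

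Your route gives a quantitative, locally Lipschitz conclusion. It also visibly covers non-proper bases, such as antipodal pairs or great subspheres of $\Sph(U)$, which matters because the lemma is stated on all of $\Ks(\Sph(U))$. The citation buys only brevity. Your fallback via \autoref{lem:link-homeomorphism} and linear images of cones is not needed.
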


We next establish apex measurability for monotone functions on spherical polytopes,
without assuming continuity of these functions.

\begin{lemma}\label{lem:apex}
For $n\geq1$, let $W$ be an $(n+1)$-dimensional Euclidean space and $U\subseteq W$ a linear hyperplane. If $Q\in\Ps(\Sph(U))$ is full-dimensional in $\Sph(U)$ and
$\Phi\in\mathcal M_n(\Sph(W))$, then
\[
 \Sph(W)\setminus\Sph(U)\to\R,\qquad x\mapsto\Phi(x*Q),
\]
is measurable with respect to the completed measure $\sigma_n$ on each
component of $\Sph(W)\setminus\Sph(U)$.
\end{lemma}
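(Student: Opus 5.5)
Fix a unit normal $e$ to $U$ in $W$. The two components of $\Sph(W)\setminus\Sph(U)$ are $S_e^+$ and $S_e^-$, and the second is handled by replacing $e$ with $-e$, so we work on $S_e^+$. The plan is to pull the function $f(x)\coloneqq\Phi(x*Q)$ back to $U=W\cap e^\perp$ by the gnomonic chart $g_e$. There we show it is monotone with respect to the convex cone $Q^\vee\subseteq U$, and then invoke \autoref{lem:bbl}.

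\textbf{Well-definedness.} First we check that $f$ is defined. The cone $Q^\vee+\R_{\geq0}x$ is polyhedral because $Q^\vee$ is. By \eqref{eq:coning-dimension}, $\dim(x*Q)=\dim Q+1=n$. Hence $x*Q$ is an $n$-dimensional spherical polytope in $\Sph(W)$, and $\Phi(x*Q)$ makes sense. Since $Q$ is full-dimensional in $\Sph(U)$, the cone $Q^\vee$ has nonempty interior in $U$. This includes the degenerate case $Q^\vee=U$, which occurs for instance when $n=1$ and $Q=\Sph(U)$.

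\textbf{Key identity.} For $x\in S_e^+$ put $z_x\coloneqq g_e(x)\in U$, so that $x/\langle e,x\rangle=e+z_x$. We claim that for $x,y\in S_e^+$,
\[
 y\in x*Q\quad\Longleftrightarrow\quad z_y-z_x\in Q^\vee .
\]
Indeed, $y\in x*Q$ means $y=q+tx$ with $q\in Q^\vee\subseteq U$ and $t\geq0$. Taking the $e$-component gives $\langle e,y\rangle=t\langle e,x\rangle$, so $t>0$. Dividing by $\langle e,y\rangle$ then yields
\[
 e+z_y=\frac{q}{\langle e,y\rangle}+e+z_x ,
\]
that is, $z_y-z_x=q/\langle e,y\rangle\in Q^\vee$. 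The converse follows by reversing these steps: the cone over the slice $\{e+z:z\in U\}$ meets $x*Q$ exactly in $e+z_x+Q^\vee$.

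\textbf{Monotonicity.} If $y\in x*Q\setminus\Sph(U)$, then $y*Q\subseteq x*Q$. This holds because $Q^\vee+\R_{\geq0}y\subseteq Q^\vee+\R_{\geq0}x$, as the latter cone is convex and contains both $Q^\vee$ and $y$. Since $\Phi$ is monotone under inclusion, $F\coloneqq f\circ g_e^{-1}:U\to\R$ satisfies
\[
 F(z')\leq F(z)\qquad\text{whenever } z'-z\in Q^\vee .
\]
Thus $-F$ satisfies the hypothesis of \autoref{lem:bbl}, with $D=U\cong\R^n$ and $C=Q^\vee$, which has nonempty interior. Hence $F$ is Lebesgue measurable.

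\textbf{Transfer to the sphere.} Finally, $g_e^{-1}$ is a smooth diffeomorphism $U\to S_e^+$. Consequently the pushforward of $\sigma_n|_{S_e^+}$ under $g_e$ has a positive continuous density with respect to Lebesgue measure on $U$. By \eqref{eq:positive-density-nullsets}, the completed $\sigma$-algebras correspond under $g_e$, so $f=F\circ g_e$ is measurable for the completed measure $\sigma_n$ on $S_e^+$.

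The main point requiring care is the key identity, namely translating the spherical coning order into a translation-cone order in the gnomonic chart. The remaining steps are direct applications of earlier results.
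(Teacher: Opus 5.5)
Your proof is correct and takes essentially the same route as the paper. You pull back to the gnomonic chart, show the function is antitone along the cone $Q^\vee$, apply \autoref{lem:bbl}, and transfer measurability through a strictly positive density; the paper checks the inclusion $Q^\vee+\R_{\geq0}(e+z+h)\subseteq Q^\vee+\R_{\geq0}(e+z)$ directly instead of your membership identity, and cites the explicit density $(1+\|z\|^2)^{-(n+1)/2}$ from Schneider where you use the equally valid general fact about pullbacks of volume under a diffeomorphism.
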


\begin{proof}
We first establish Lebesgue measurability in gnomonic coordinates.
Choose either unit normal $e$ to $U$ in $W$ and set
$\Omega\coloneqq\{p\in\Sph(W):\langle e,p\rangle>0\}$.
Use the gnomonic coordinates $x=g_e^{-1}:U\to\Omega$,
$x(z)=(e+z)/\|e+z\|$, and put $C=Q^\vee$.
Since $Q$ spans $U$, it contains a basis $q_1,\ldots,q_n$ of $U$.
The set $\{\sum_{i=1}^n t_iq_i:t_i>0\}\subseteq C$ is nonempty and
open in $U$, so $C$ has nonempty interior.
By \eqref{eq:coning-dimension}, $\dim(x(z)*Q)=n$, so
$f(z)\coloneqq\Phi(x(z)*Q)$ is defined for every $z\in U$.
For $z\in U$ and $h\in C$,
\[
 C+\R_{\geq0}(e+z+h)
 \subseteq C+\R_{\geq0}h+\R_{\geq0}(e+z)
 =C+\R_{\geq0}(e+z).
\]
Taking spherical links and using monotonicity of $\Phi$, we obtain
\[
 x(z+h)*Q\subseteq x(z)*Q,\qquad f(z+h)\leq f(z).
\]
By \autoref{lem:bbl} applied to $-f$, $f$ is Lebesgue measurable.

To transfer this measurability to $\Omega$, write
$(x^*\sigma_n)(A)\coloneqq\sigma_n(x(A))$
for Borel sets $A\subseteq U$.
The change-of-variables formula in \cite[Lemma~3.2.3]{Schneider} states
\[
 |g_e(B)|=\int_B\langle e,p\rangle^{-(n+1)}\dd\sigma_n(p)
 \qquad(B\subseteq\Omega\text{ Borel}).
\]
Taking $B=x(A)$, with $g_e\circ x=\operatorname{id}_U$ and
$\langle e,x(z)\rangle=(1+\|z\|^2)^{-1/2}$, gives
\[
 |A|=\int_{x(A)}\langle e,p\rangle^{-(n+1)}\dd\sigma_n(p)
 =\int_A(1+\|z\|^2)^{(n+1)/2}\dd(x^*\sigma_n)(z).
\]
Equivalently, since the density is strictly positive,
\begin{equation}\label{eq:spherical-pullback-measure}
 (x^*\sigma_n)(A)
 =\int_A(1+\|z\|^2)^{-(n+1)/2}\dd z.
\end{equation}
For $c\in\R$, choose Borel sets $B_c,Z_c\subseteq U$ with
$f^{-1}((c,\infty))\triangle B_c\subseteq Z_c$ and $|Z_c|=0$.
The homeomorphism $x$ maps both sets to Borel sets, and
\eqref{eq:spherical-pullback-measure} gives
\[
 \{p\in\Omega:\Phi(p*Q)>c\}\triangle x(B_c)
 \subseteq x(Z_c),\qquad \sigma_n(x(Z_c))=0.
\]
Thus $p\mapsto\Phi(p*Q)$ is measurable with respect to the completed
measure $\sigma_n$ on $\Omega$.
\end{proof}

\section{Signed coning and finite cutting}\label{sec:coning}

To handle signed averages, we use a regularity class that permits both
positive and negative coefficients.  Its decomposition makes the apex
integrals measurable without requiring the valuation itself to be continuous.

For $n\geq0$, $\cV_n$ consists of the bounded simple valuations
$\lambda:\Ps(\Sph^n)\to\R$ admitting a representation
\begin{equation}\label{eq:regularity-class}
 \lambda(P)=\nu(P)+\sum_{\alpha=1}^q a_\alpha\Phi_\alpha(P)
 \qquad(\dim P=n),
\end{equation}
where $q\geq0$, $a_\alpha\in\R$, $\nu$ is a bounded continuous valuation
on $\Ps(\Sph^n)$, and
$\Phi_\alpha\in\mathcal M_n(\Sph^n)$.
The functions $\Phi_\alpha$ need not be defined on lower-dimensional
spherical polytopes.  For an $(n+1)$-dimensional Euclidean space $W$,
the analogous class on $\Sph(W)$ is defined by orthogonally identifying
$W$ with $\R^{n+1}$.

For $n\geq1$, fix an $(n+1)$-dimensional Euclidean space $W$,
a linear hyperplane $U\subseteq W$, and a component $\Omega$ of
$\Sph(W)\setminus\Sph(U)$.
For $\Phi\in\mathcal M_n(\Sph(W))$, define
\begin{equation}\label{eq:J}
 (J_\Omega\Phi)(Q)\coloneqq
 \frac1{\omega_{n+1}}\int_\Omega\Phi(x*Q)\dd\sigma_n(x)
 \qquad(\dim Q=n-1).
\end{equation}

\begin{lemma}\label{lem:kernelrec}
For $n\geq1$, $J_\Omega\Phi$ is well defined and satisfies
\[
 \|J_\Omega\Phi\|_\infty\leq\tfrac12\|\Phi\|_\infty,\qquad   J_\Omega\Phi\in\mathcal M_{n-1}(\Sph(U)).
\]
\end{lemma}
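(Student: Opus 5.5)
Three things need checking: the integrand is defined, measurable and bounded; the integral is bounded by half the sup norm; and $J_\Omega\Phi$ is monotone on $(n-1)$-dimensional polytopes. We treat them in that order.

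\emph{Definedness and measurability.} Let $Q\in\Ps(\Sph(U))$ with $\dim Q=n-1$, so $Q$ is full-dimensional in $\Sph(U)$. For every $x\in\Omega$, the cone $Q^\vee+\R_{\geq0}x$ is polyhedral, as noted before \eqref{eq:coning-dimension}, and \eqref{eq:coning-dimension} gives $\dim(x*Q)=n$. Hence $x*Q$ is an $n$-dimensional spherical polytope in $\Sph(W)$, and $\Phi(x*Q)$ is defined. By \autoref{lem:apex}, $x\mapsto\Phi(x*Q)$ is measurable for the completed measure $\sigma_n$ on $\Omega$. It is bounded by $\|\Phi\|_\infty$, and $\sigma_n(\Omega)<\infty$, so the integral in \eqref{eq:J} exists and $J_\Omega\Phi$ is well defined.

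\emph{The norm bound.} The set $\Omega$ is an open hemisphere, so $\sigma_n(\Omega)=\omega_{n+1}/2$: the two hemispheres are exchanged by $x\mapsto-x$, and the great hypersphere $\Sph(U)$ is $\sigma_n$-null. Therefore
\[
 |(J_\Omega\Phi)(Q)|\leq\frac{1}{\omega_{n+1}}\cdot\frac{\omega_{n+1}}{2}\,\|\Phi\|_\infty=\tfrac12\|\Phi\|_\infty,
\]
and in particular $J_\Omega\Phi$ is bounded.

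\emph{Monotonicity.} Let $Q\subseteq Q'$ be $(n-1)$-dimensional polytopes in $\Sph(U)$. Then $Q^\vee\subseteq Q'^\vee$, so $Q^\vee+\R_{\geq0}x\subseteq Q'^\vee+\R_{\geq0}x$, and taking links gives $x*Q\subseteq x*Q'$ for every $x\in\Omega$. Since $\Phi$ is monotone, $\Phi(x*Q)\leq\Phi(x*Q')$ pointwise on $\Omega$. Both functions are measurable, so integrating preserves the inequality, and hence $J_\Omega\Phi\in\mathcal M_{n-1}(\Sph(U))$.

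When $n=1$, $\Sph(U)$ is a pair of antipodal points and the $0$-dimensional polytopes are single points. The argument above applies verbatim. The only nonroutine ingredient is the measurability of the integrand, and that is exactly what \autoref{lem:apex} supplies.
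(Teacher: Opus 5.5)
Your proof is correct and follows the paper's argument step for step: definedness and measurability via \eqref{eq:coning-dimension} and \autoref{lem:apex}, the bound from $\sigma_n(\Omega)=\omega_{n+1}/2$, and monotonicity from $x*Q\subseteq x*Q'$ integrated pointwise. One small slip: for $n=1$ the $0$-dimensional members of $\Ps(\Sph(U))$ include not only single points but also the antipodal pair $\Sph(U)$ itself, the link of the line $U$, which the paper later evaluates on; your general argument covers this case without change.
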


\begin{proof}
For well-definedness, take $Q\in\Ps(\Sph(U))$ with $\dim Q=n-1$.
Then
$\dim(x*Q)=n$ for every $x\in\Omega$ by \eqref{eq:coning-dimension}.
The function $x\mapsto\Phi(x*Q)$ is measurable by \autoref{lem:apex}
and satisfies $|\Phi(x*Q)|\leq\|\Phi\|_\infty$.
Since $\sigma_n(\Omega)<\infty$, the integral in \eqref{eq:J}
converges absolutely. Thus $J_\Omega\Phi$ is well defined, and
\[
 \begin{aligned}
 \|J_\Omega\Phi\|_\infty
 &=\sup_{\substack{Q\in\Ps(\Sph(U))\\ \dim Q=n-1}}
   \left|\frac1{\omega_{n+1}}
   \int_\Omega\Phi(x*Q)\dd\sigma_n(x)\right|\\
 &\leq\frac1{\omega_{n+1}}
   \int_\Omega\|\Phi\|_\infty\dd\sigma_n(x)\\
 &=\frac{\sigma_n(\Omega)}{\omega_{n+1}}\|\Phi\|_\infty
   =\tfrac12\|\Phi\|_\infty.
 \end{aligned}
\]
To verify monotonicity, take full-dimensional $Q_1\subseteq Q_2$ in $\Sph(U)$.
Then
$x*Q_1\subseteq x*Q_2$ for every $x\in\Omega$.
By monotonicity of $\Phi$,
\[
 \begin{aligned}
 (J_\Omega\Phi)(Q_1)
 &=\frac1{\omega_{n+1}}\int_\Omega\Phi(x*Q_1)\dd\sigma_n(x)\\
 &\leq\frac1{\omega_{n+1}}\int_\Omega\Phi(x*Q_2)\dd\sigma_n(x)
 =(J_\Omega\Phi)(Q_2).
 \end{aligned}
\]
Hence $J_\Omega\Phi\in\mathcal M_{n-1}(\Sph(U))$.
\end{proof}

For dimension reduction, we combine the two hemispherical averages with
opposite signs.
Fix an $n$-dimensional subspace $W\subseteq\R^{n+1}$ and a unit normal
$e$ to $W$.  For an $\SO(n+1)$-invariant $\lambda\in\cV_n$ and
$Q\in\Ps(\Sph(W))$, put
\begin{equation}\label{eq:T}
 (T_{W,e}\lambda)(Q)\coloneqq
 \frac1{\omega_{n+1}}\int_{\Sph^n\setminus\Sph(W)}
 \sgn\langle x,e\rangle\,\lambda(x*Q)\dd\sigma_n(x).
\end{equation}
Here $\sgn$ denotes the sign function, with $\sgn(0)=0$.
For continuous valuations on $\Ks(\Sph^n)$, Wang and Wu construct the corresponding transform in \cite[Proposition~7.2]{WangWu}. The next proposition establishes the required properties for
$\lambda\in\cV_n$ on $\Ps(\Sph^n)$.

\begin{proposition}\label{prop:T}
For $n\geq1$, let $\lambda\in\cV_n$ be $\SO(n+1)$-invariant.
The transform $T_{W,e}\lambda$ is a bounded simple valuation in $\cV_{n-1}$
with $\|T_{W,e}\lambda\|_\infty\leq\|\lambda\|_\infty$.
For every $h\in\Orth(W)$,
\begin{equation}\label{eq:signed-coning-equivariance}
 (T_{W,e}\lambda)(hQ)=(\det h)(T_{W,e}\lambda)(Q).
\end{equation}
In particular, it is $\SO(W)$-invariant.
\end{proposition}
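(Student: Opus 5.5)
The plan is to verify each claimed property directly from \eqref{eq:T}, splitting the integral over the two open hemispheres $\Omega_\pm\coloneqq\{x\in\Sph^n:\pm\langle x,e\rangle>0\}$. On $\Omega_\pm$ the sign factor is constant, so
\[
 T_{W,e}\lambda=\frac1{\omega_{n+1}}\int_{\Omega_+}\lambda(x*Q)\dd\sigma_n(x)-\frac1{\omega_{n+1}}\int_{\Omega_-}\lambda(x*Q)\dd\sigma_n(x).
\]
Throughout we identify $W$ orthogonally with $\R^n$, so that $\cV_{n-1}$ makes sense on $\Sph(W)$.

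\emph{Well-definedness, simplicity and the bound.} If $\dim Q<n-1$, then $\dim(x*Q)<n$ by \eqref{eq:coning-dimension}. Since $\lambda$ is simple and $n\geq1$, the integrand vanishes, so $T_{W,e}\lambda$ is simple; this includes $Q=\varnothing$, where $x*\varnothing=\{x\}$. If $\dim Q=n-1$, then $\dim(x*Q)=n$, so the representation \eqref{eq:regularity-class} applies:
\[
 \lambda(x*Q)=\nu(x*Q)+\sum_\alpha a_\alpha\Phi_\alpha(x*Q).
\]
The first term is continuous in $x$ off $\Sph(W)$ by \autoref{lem:coning-continuity}, hence Borel. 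Each $\Phi_\alpha(x*Q)$ is measurable on $\Omega_\pm$ by \autoref{lem:apex}. The integrand is bounded by $\|\lambda\|_\infty$, and $\sigma_n(\Sph^n\setminus\Sph(W))=\omega_{n+1}$, which gives $\|T_{W,e}\lambda\|_\infty\leq\|\lambda\|_\infty$.

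\emph{Valuation property.} Take $Q_1,Q_2\in\Ps(\Sph(W))$ with $Q_1\cup Q_2\in\Ps(\Sph(W))$. I would show that, for each $x\notin\Sph(W)$,
\[
 x*(Q_1\cup Q_2)=(x*Q_1)\cup(x*Q_2),\qquad x*(Q_1\cap Q_2)=(x*Q_1)\cap(x*Q_2).
\]
The union identity is immediate from \eqref{eq:spherical-cone-param}. For the intersection identity, note that every point $p\neq x$ of $x*Q$ determines its base point $y\in\Sph(W)$ uniquely: $y$ is the normalized projection of $p$ along $x$ onto $W$. If $p=x$, the point belongs to both sides trivially, including the case $Q_1\cap Q_2=\varnothing$. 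The four sets involved are all in $\Ps(\Sph^n)$, so the valuation identity for $\lambda$ holds pointwise in $x$ and integrates to the identity for $T_{W,e}\lambda$. I expect this to be the most delicate step, together with checking that the lower-dimensional cases are consistent with simplicity.

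\emph{Membership in $\cV_{n-1}$.} For $\dim Q=n-1$ the splitting above gives
\[
 T_{W,e}\lambda(Q)=\tilde\nu(Q)+\sum_\alpha a_\alpha\bigl(J_{\Omega_+}\Phi_\alpha-J_{\Omega_-}\Phi_\alpha\bigr)(Q),
 \qquad
 \tilde\nu(Q)\coloneqq\frac1{\omega_{n+1}}\int\sgn\langle x,e\rangle\,\nu(x*Q)\dd\sigma_n(x),
\]
where $\tilde\nu$ is defined for all $Q\in\Ps(\Sph(W))$. By the argument of the previous paragraph, $\tilde\nu$ is a bounded valuation. It is continuous in $Q$ by dominated convergence, since $\nu$ is bounded and $Q\mapsto\nu(x*Q)$ is continuous for each fixed $x$ by \autoref{lem:coning-continuity}. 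The terms $J_{\Omega_\pm}\Phi_\alpha$ lie in $\mathcal M_{n-1}(\Sph(W))$ by \autoref{lem:kernelrec}. This yields a representation of the form \eqref{eq:regularity-class} with $2q$ monotone terms, so $T_{W,e}\lambda\in\cV_{n-1}$.

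\emph{Equivariance.} Given $h\in\Orth(W)$, extend it to $g\in\Orth(n+1)$ by setting $g|_W=h$ and $ge=(\det h)e$. Then $\det g=(\det h)^2=1$, so $g\in\SO(n+1)$. Moreover $g(y*Q)=(gy)*(hQ)$, since $g$ is linear and maps $Q^\vee$ to $(hQ)^\vee$. Substitute $x=gy$ in \eqref{eq:T} for $hQ$; this is legitimate because $\sigma_n$ is $g$-invariant and $g$ preserves $\Sph(W)$. By $\SO(n+1)$-invariance, $\lambda(gy*hQ)=\lambda(y*Q)$. For the sign factor,
\[
 \sgn\langle gy,e\rangle=\sgn\langle y,g^{-1}e\rangle=(\det h)\,\sgn\langle y,e\rangle .
\]
Combining these gives \eqref{eq:signed-coning-equivariance}. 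Taking $\det h=1$ yields $\SO(W)$-invariance.
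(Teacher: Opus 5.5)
There is one gap, at the step where you say ``by the argument of the previous paragraph, $\tilde\nu$ is a bounded valuation.'' Everything else follows the paper's proof: the hemisphere splitting, the two coning set identities, dominated convergence, \autoref{lem:kernelrec}, and the extension $g|_W=h$, $ge=(\det h)e$.

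That earlier argument worked for $\lambda$ only because $\lambda(x*\varnothing)=\lambda(\{x\})=0$ by simplicity. The continuous part $\nu$ in \eqref{eq:regularity-class} is neither simple nor assumed invariant. So your definition gives
\[
 \tilde\nu(\varnothing)=\frac1{\omega_{n+1}}\int_{\Sph^n\setminus\Sph(W)}\sgn\langle x,e\rangle\,\nu(\{x\})\dd\sigma_n(x),
\]
which we call $c$. It vanishes only if $x\mapsto\nu(\{x\})$ has equal integrals over the two open hemispheres, and nothing in the hypotheses forces this. Now take disjoint $Q_1,Q_2$ with convex union, such as $\{u\}$ and $\{-u\}$ in $\Sph(W)$. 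Integrating the pointwise identity gives $\tilde\nu(Q_1)+\tilde\nu(Q_2)=\tilde\nu(Q_1\cup Q_2)+c$. Thus your $\tilde\nu$ violates the normalization at $\varnothing$ required of the continuous term in $\cV_{n-1}$. Resetting its value at $\varnothing$ to $0$ instead breaks the inclusion--exclusion identity for such pairs whenever $c\neq0$. As written, your decomposition does not show $T_{W,e}\lambda\in\cV_{n-1}$.

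The fix is the paper's: integrate $\nu(x*Q)-\nu(\{x\})$ instead. This yields $\tilde\nu-c$, which still satisfies the inclusion--exclusion identity, because the constants cancel with two on each side. It vanishes at $\varnothing$, is bounded by $2\|\nu\|_\infty$, and is continuous. Then, for $\dim Q=n-1$,
\[
 T_{W,e}\lambda=(\tilde\nu-c)+c\one+\sum_{\alpha=1}^q a_\alpha\bigl(J_{\Omega_+}\Phi_\alpha-J_{\Omega_-}\Phi_\alpha\bigr).
\]
Here the constant is placed among the monotone terms, which is allowed since $\one\in\mathcal M_{n-1}(\Sph(W))$. It cannot be absorbed into the continuous valuation part. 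The function equal to $c$ on nonempty sets and $0$ at $\varnothing$ is not a valuation on $\Ps(\Sph(W))$, again because of antipodal pairs. It is harmless in the monotone part only because those terms need to be defined just on $(n-1)$-dimensional sets. With this change, the rest of your proposal is correct: well-definedness, simplicity, the bound $\|T_{W,e}\lambda\|_\infty\leq\|\lambda\|_\infty$, the valuation property of $T_{W,e}\lambda$, and the equivariance computation.
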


\begin{proof}
We first show that the transform is well defined, bounded and simple.
Fix a representation \eqref{eq:regularity-class} of $\lambda$.
For $\dim Q=n-1$, each $x\mapsto\Phi_\alpha(x*Q)$ is integrable
on both hemispheres by \autoref{lem:kernelrec}, while
$x\mapsto\nu(x*Q)$ is bounded and continuous by
\autoref{lem:coning-continuity}.
For $\dim Q<n-1$, including $Q=\varnothing$, \eqref{eq:coning-dimension}
and simplicity imply
\[
 \dim(x*Q)=\dim Q+1<n,\qquad
 \lambda(x*Q)=0,\qquad (T_{W,e}\lambda)(Q)=0.
\]
Thus \eqref{eq:T} is well defined and satisfies
\[
 |(T_{W,e}\lambda)(Q)|
 \leq\frac1{\omega_{n+1}}
       \int_{\Sph^n\setminus\Sph(W)}|\lambda(x*Q)|\dd\sigma_n(x)
 \leq\|\lambda\|_\infty.
\]

To represent $T_{W,e}\lambda$ as in \eqref{eq:regularity-class}, define
\[
 \widetilde\nu(Q)\coloneqq
 \frac1{\omega_{n+1}}\int_{\Sph^n\setminus\Sph(W)}
       \sgn\langle x,e\rangle\,[\nu(x*Q)-\nu(\{x\})]\dd\sigma_n(x).
\]
Subtracting $\nu(\{x\})$ ensures $\widetilde\nu(\varnothing)=0$,
since $x*\varnothing=\{x\}$.  Also,
$\|\widetilde\nu\|_\infty\leq2\|\nu\|_\infty$.
For $Q_1,Q_2,Q_1\cup Q_2\in\Ps(\Sph(W))$ and $x\notin\Sph(W)$,
uniqueness of the decomposition in $W\oplus\R x$ gives
\begin{equation}\label{eq:coning-set-identities}
 x*(Q_1\cup Q_2)=(x*Q_1)\cup(x*Q_2)\quad\text{and}\quad x*(Q_1\cap Q_2)=(x*Q_1)\cap(x*Q_2).
\end{equation}
If $Q_1\cap Q_2=\varnothing$, both sides of the second identity equal
$\{x\}$. Applying the valuation identities for $\lambda$ and $\nu$ to
$x*Q_1,x*Q_2$ and integrating yields
\[
 \eta(Q_1)+\eta(Q_2)=\eta(Q_1\cup Q_2)+\eta(Q_1\cap Q_2),
 \qquad \eta\in\{T_{W,e}\lambda,\widetilde\nu\}.
\]
For continuity of $\widetilde\nu$, suppose that $Q_i\to Q$ in $d_{\mathrm s}$.
Then $\nu(x*Q_i)\to\nu(x*Q)$ for every
$x\notin\Sph(W)$ by \autoref{lem:coning-continuity}. Since the differences are bounded by
$2\|\nu\|_\infty$, dominated convergence implies
\[
 |\widetilde\nu(Q_i)-\widetilde\nu(Q)|
 \leq\frac1{\omega_{n+1}}\int_{\Sph^n\setminus\Sph(W)}
       |\nu(x*Q_i)-\nu(x*Q)|\dd\sigma_n(x)\longrightarrow0.
\]
With
\[
 c\coloneqq\frac1{\omega_{n+1}}\int_{\Sph^n\setminus\Sph(W)}
       \sgn\langle x,e\rangle\,\nu(\{x\})\dd\sigma_n(x),
\]
splitting \eqref{eq:T} over $S_e^+$ and $S_e^-$ gives, for $\dim Q=n-1$,
\begin{equation}\label{eq:T-regularity}
 T_{W,e}\lambda
 =\widetilde\nu+c\one+
   \sum_{\alpha=1}^q a_\alpha
       (J_{S_e^+}\Phi_\alpha-J_{S_e^-}\Phi_\alpha).
\end{equation}
By \autoref{lem:kernelrec}, each $J_{S_e^\pm}\Phi_\alpha$ belongs to
$\mathcal M_{n-1}(\Sph(W))$, as does the constant function $\one$.
Since $\widetilde\nu$ is a bounded continuous valuation,
\eqref{eq:T-regularity} proves $T_{W,e}\lambda\in\cV_{n-1}$.

For equivariance \eqref{eq:signed-coning-equivariance}, take $h\in\Orth(W)$
and define $g|_W=h$ and $ge=(\det h)e$.
Then $\det g=(\det h)^2=1$, so $g\in\SO(n+1)$.
Substitution of $x=gy$, using invariance of $\sigma_n$ and $\lambda$,
yields
\begin{align*}
 (T_{W,e}\lambda)(hQ)
 &=\frac1{\omega_{n+1}}\int_{\Sph^n\setminus\Sph(W)}
   \sgn\langle gy,e\rangle\,\lambda((gy)*(hQ))\dd\sigma_n(y)\\
 &=\frac{\det h}{\omega_{n+1}}
   \int_{\Sph^n\setminus\Sph(W)}
       \sgn\langle y,e\rangle\,\lambda(g(y*Q))\dd\sigma_n(y)\\
 &=(\det h)(T_{W,e}\lambda)(Q).\qedhere
\end{align*}
\end{proof}

Finite cutting transfers relations between indicator functions to simple
valuations.
Both $\Ps(\Sph^n)$ and $\Psp(\Sph^n)$ are closed under intersection
with closed hemispheres.  For a simple valuation $\rho$ on either class,
a spherical polytope $P$ in its domain, and a great hypersphere $H$ with closed
hemispheres $H^\pm$, the valuation identity and simplicity give
\begin{equation}\label{eq:spherical-single-cut}
 \begin{aligned}
 \rho(P)&=\rho(P\cap H^+)+\rho(P\cap H^-)-\rho(P\cap H)\\
 &=\rho(P\cap H^+)+\rho(P\cap H^-).
 \end{aligned}
\end{equation}
If $\mathcal C(P)$ denotes the full-dimensional cells obtained from $P$
by finitely many such cuts, iterating~\eqref{eq:spherical-single-cut} yields
\cite[Lemma~2.5 and Equation~(2.1)]{WangWu}
\begin{equation}\label{eq:finite-cut-sum}
 \rho(P)=\sum_{C\in\mathcal C(P)}\rho(C).
\end{equation}
\begin{lemma}\label{lem:cutting}
For $n\geq1$, let $\rho:\mathcal A\to\R$ be a simple valuation, where
$\mathcal A$ is either $\Ps(\Sph^n)$ or $\Psp(\Sph^n)$.
If $P_1,\ldots,P_q\in\mathcal A$, $\alpha_1,\ldots,\alpha_q\in\R$,
and
\[
 \sum_{i=1}^q\alpha_i\one_{P_i}(x)=0
 \qquad(x\in\Sph^n\setminus\Sigma)
\]
for a finite union $\Sigma$ of great hyperspheres, then
$\sum_{i=1}^q\alpha_i\rho(P_i)=0$.
\end{lemma}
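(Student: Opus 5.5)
We cut every $P_i$ along all the great hyperspheres that matter and apply the finite cutting identity~\eqref{eq:finite-cut-sum}. Each $P_i$ is a spherical polytope, so $P_i^\vee$ is an intersection of finitely many closed halfspaces through $0$. We collect the following great hyperspheres: the boundary hyperspheres of these halfspaces for all $i$, and the finitely many great hyperspheres forming $\Sigma$. If some $P_i$ is lower-dimensional, then $\rho(P_i)=0$ by simplicity and $\one_{P_i}$ vanishes off a great hypersphere containing $P_i$. Such a $P_i$ can therefore be discarded after enlarging $\Sigma$ by that hypersphere, so we may assume every $P_i$ is full-dimensional. Let $H_1,\ldots,H_m$ be the resulting great hyperspheres, with closed hemispheres $H_k^\pm$. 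For each sign vector $\varepsilon\in\{+,-\}^m$, put $A_\varepsilon\coloneqq\bigcap_kH_k^{\varepsilon_k}$. This is a spherical polytope, possibly lower-dimensional or empty.

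Next we apply~\eqref{eq:finite-cut-sum} to each $P_i$ with cuts $H_1,\ldots,H_m$. This gives $\rho(P_i)=\sum_\varepsilon\rho(P_i\cap A_\varepsilon)$, where terms with $P_i\cap A_\varepsilon$ not full-dimensional vanish by simplicity. Each cut stays inside $\mathcal A$ because both classes are closed under intersection with closed hemispheres. We then need the key combinatorial fact. Take a full-dimensional cell $A_\varepsilon$ and a full-dimensional $P_i$. Since every facet hyperplane of $P_i$ is among the $H_k$, the interior of $A_\varepsilon$ lies in a single open side of each such hyperplane. Hence either $\operatorname{int}A_\varepsilon\subseteq P_i$, in which case $P_i\cap A_\varepsilon=A_\varepsilon$, or $\operatorname{int}A_\varepsilon\cap P_i=\varnothing$, in which case $P_i\cap A_\varepsilon$ is lower-dimensional. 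In the second case we use that $P_i$ is the intersection of the closed halfspaces determined by its facets. When $P_i=\Sph^n$, or $P_i$ is a closed hemisphere, the statement is trivial or immediate.

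With this fact we obtain
\[
 \sum_i\alpha_i\rho(P_i)=\sum_{\varepsilon:\ \dim A_\varepsilon=n}\rho(A_\varepsilon)\sum_{i:\ \operatorname{int}A_\varepsilon\subseteq P_i}\alpha_i .
\]
Each full-dimensional cell has nonempty interior, and this interior is not contained in the finite union $\Sigma$ of great hyperspheres, since $\Sigma$ is nowhere dense. Pick $x\in\operatorname{int}A_\varepsilon\setminus\Sigma$. Then the inner sum equals $\sum_i\alpha_i\one_{P_i}(x)=0$, so the total is zero.

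The main obstacle is the case $\mathcal A=\Psp(\Sph^n)$. Here a full-dimensional cell $A_\varepsilon$ need not be proper, so $\rho(A_\varepsilon)$ may be undefined. We avoid this by never evaluating $\rho$ on $A_\varepsilon$ itself. Instead we group the terms as $\rho(P_i\cap A_\varepsilon)$, and each of these is proper because it is contained in the proper set $P_i$. The terms with $\operatorname{int}A_\varepsilon\subseteq P_i$ all equal the same proper set $A_\varepsilon$, so whenever a cell actually appears in the sum, $\rho(A_\varepsilon)$ is defined. Cells that lie in no $P_i$ contribute nothing. The grouping argument therefore goes through verbatim. It remains to check that~\eqref{eq:finite-cut-sum}, applied with the same cut sequence to each $P_i$, produces exactly the cells $P_i\cap A_\varepsilon$.
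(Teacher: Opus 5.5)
Your proof is correct and takes essentially the paper's approach: cut along finitely many great hyperspheres containing $\Sigma$ and the facet hyperspheres of the $P_i$, expand each $\rho(P_i)$ over full-dimensional cells via \eqref{eq:finite-cut-sum}, and evaluate the indicator relation at an interior point of each cell. The only difference is in the case $\mathcal A=\Psp(\Sph^n)$: the paper adds the coordinate great hyperspheres so that every cell is proper, whereas you observe that only cells contained in some $P_i$ contribute, and your closing ``remains to check'' follows immediately from iterating \eqref{eq:spherical-single-cut}.
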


\begin{proof}
Choose finitely many great hyperspheres whose union contains $\Sigma$
and all $\partial P_i$, including the coordinate great hyperspheres.
The closures of the components of their complement form a family
$\mathcal C$ of full-dimensional proper cells.
For $x_C\in\operatorname{int}C$, constancy of $\one_{P_i}$ on
$\operatorname{int}C$ and closedness of $P_i$ imply
\[
 x_C\in P_i\ \Longrightarrow\ C\subseteq P_i,\qquad
 x_C\notin P_i\ \Longrightarrow\ \dim(C\cap P_i)<n.
\]
Thus simplicity and \eqref{eq:finite-cut-sum} give
\[
 \rho(P_i)=\sum_{C\in\mathcal C}\one_{P_i}(x_C)\rho(C),
 \qquad 1\leq i\leq q.
\]
Hence
\[
 \sum_i\alpha_i\rho(P_i)
 =\sum_{C\in\mathcal C}
   \left(\sum_i\alpha_i\one_{P_i}(x_C)\right)\rho(C)=0.
\]
The last equality holds because $x_C\notin\Sigma$.
\end{proof}

To pass from spherical simplices to proper spherical polytopes, we use a face-to-face
triangulation.  A finite family of spherical simplices is \emph{face-to-face} if the intersection of any two spherical  simplices in the family is empty or a face of both. In dimension one, a full-dimensional proper spherical polytope is itself a nondegenerate arc.

\begin{lemma}[{\cite[Lemma~8.1]{WangWu}}]\label{lem:triangulation}
Every full-dimensional proper spherical polytope in $\Sph^n$ with $n\geq1$
admits a finite face-to-face triangulation into nondegenerate proper
spherical $n$-simplices.
\end{lemma}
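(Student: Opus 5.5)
We would reduce the statement to the corresponding Euclidean fact via gnomonic projection. Let $P\subseteq\Sph^n$ be a full-dimensional proper spherical polytope. Since $P$ is proper, it lies in some open hemisphere $S_e^+$. Since $P=C\cap\Sph^n$ for a pointed polyhedral cone $C=P^\vee$, compactness lets us choose $e$ so that $P\subseteq S_e^+$ with $\langle e,x\rangle$ bounded below on $P$. By \cite[Section~3.2]{Schneider}, as recalled after the definition of $g_e$, the set $R\coloneqq g_e(P)$ is a compact convex polytope in $e^\perp\cong\R^n$. It is full-dimensional there, because $g_e$ is a homeomorphism $S_e^+\to e^\perp$ and $P$ has nonempty interior in $\Sph^n$.

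Next we triangulate $R$ face-to-face in the Euclidean sense, using no new vertices. We would proceed by induction on dimension, with a pulling or coning construction. Fix a total order on the vertex set of $R$ and let $v$ be the first vertex. Triangulate each facet $F$ of $R$ not containing $v$ by the inductive construction; the same vertex order is used throughout, so the triangulations of different facets agree on their common faces. Then cone each resulting $(n-1)$-simplex from $v$.

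Each coned simplex is nondegenerate, since $v\notin\operatorname{aff}F$. The coned simplices cover $R$, because every point of $R$ lies on a segment from $v$ to a point of some facet avoiding $v$. They meet face-to-face, because the facet triangulations are compatible and coning from a common apex preserves the face-to-face property. The main obstacle is making this compatibility across facets rigorous. The key claim is that the restriction of the pulling triangulation to any face is the pulling triangulation of that face with the induced order. This is proved by the same induction on faces rather than only on facets. As a fallback, one may invoke the standard existence of such triangulations from \cite[Chapter~1]{SchneiderConvexBodies} or the triangulation literature.

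Finally we transfer the triangulation back. For a Euclidean simplex $\Delta=\operatorname{conv}\{z_0,\dots,z_n\}\subseteq e^\perp$, its preimage is
\[
 g_e^{-1}(\Delta)=\pos\{e+z_0,\dots,e+z_n\}\cap\Sph^n .
\]
This holds because $\pos(e+\Delta)$ is exactly the cone over the points $e+\Delta$. Affine independence of the $z_i$ is equivalent to linear independence of the vectors $e+z_i$, so $g_e^{-1}(\Delta)$ is a nondegenerate spherical $n$-simplex. It is proper because it lies in $S_e^+$.

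The correspondence $\Delta\mapsto g_e^{-1}(\Delta)$ is induced by the bijection $g_e$, so it preserves unions and intersections. It also maps faces to faces: faces of $\operatorname{conv}Z$ correspond to faces of $\pos(e+Z)$, through supporting hyperplanes of $e+e^\perp$ versus supporting linear hyperplanes. Hence the pulled-back family is a finite face-to-face triangulation of $g_e^{-1}(R)=P$ into nondegenerate proper spherical $n$-simplices. For $n=1$ the construction simply returns the arc $P$ itself, or a subdivision of it.
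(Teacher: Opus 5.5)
Your proof is correct. The paper gives no proof of its own here; it imports the statement from \cite[Lemma~8.1]{WangWu}. Your route is the natural one given the identification, recorded after the definition of $g_e$, of proper spherical polytopes with Euclidean polytopes under gnomonic projection. You pass to the Euclidean polytope $g_e(P)$, triangulate it face-to-face, and pull back via $\Delta\mapsto\pos(e+\Delta)\cap\Sph^n$. This pull-back preserves nondegeneracy, since affine independence of the $z_i$ is equivalent to linear independence of the $e+z_i$, and it preserves intersections and faces.

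The one step needing care is the one you flag: compatibility of the pulling triangulations on faces shared by two facets. It does hold, because the restriction of a pulling triangulation to a face is the pulling triangulation of that face for the induced order. You can avoid it altogether with the construction the paper uses in the proof of \autoref{lem:smooth-simple-density}. Triangulate faces in order of increasing dimension, coning the already fixed triangulation of each face's boundary from a relative interior point. Each face is then triangulated exactly once, so compatibility is automatic. The extra vertices are harmless, since the statement does not require a triangulation without new vertices.

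A minor point: you do not need to choose $e$ so that $\langle e,\cdot\rangle$ is bounded below on $P$. This is automatic, since $P$ is compact and contained in $S_e^+$.
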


\section{Vanishing of simple valuations}

The induction step recovers simplex values from apex integrals and
then relates those integrals to signed coning in one lower dimension.
We begin with the oriented simplex identity underlying this recovery.

For $p_0,\ldots,p_k\in\Sph^n$, write
\[
 \Delta(p_0,\ldots,p_k)=\pos\{p_0,\ldots,p_k\}\cap\Sph^n,
\]
also when the generators are dependent.

Call $p_0,\ldots,p_{n+1}\in\Sph^n$ \emph{in general position} if every
$n+1$ of the vectors are linearly independent.
For vectors in $\R^{n+1}$, $\det(p_0,\ldots,p_n)$ denotes the determinant
of the matrix with columns $p_0,\ldots,p_n$ in the standard orientation.
Fix $n\geq1$ and a simple valuation $\rho:\Ps(\Sph^n)\to\R$ with
$\rho(\Sph^n)=0$.  Define its \emph{oriented simplex cochain} by
\begin{equation}\label{eq:oriented-cochain}
 c_\rho(p_0,\ldots,p_n)\coloneqq
 \sgn\det(p_0,\ldots,p_n)\,
 \rho(\Delta(p_0,\ldots,p_n)).
\end{equation}

For a general-position tuple $(p_0,\ldots,p_{n+1})$, put $\Delta_i=\Delta(p_0,\ldots,\widehat p_i,\ldots,p_{n+1})$ and $d_i=(-1)^i\det(p_0,\ldots,\widehat p_i,\ldots,p_{n+1})$. Wang and Wu \cite[Lemma~6.2, Equation~(6.1)]{WangWu} proved that 
\begin{equation}\label{eq:spherical-indicator-identity} 
\sum_{i=0}^{n+1}\sgn(d_i)\one_{\Delta_i}=\gamma\one_{\Sph^n}
\end{equation}
outside a finite union of great hyperspheres, where $\gamma=1$ if all $d_i>0$, $\gamma=-1$ if all $d_i<0$, and $\gamma=0$ otherwise. Their proof also applies when $n=1$. By \autoref{lem:cutting},
\[
 \sum_{i=0}^{n+1}\sgn(d_i)\rho(\Delta_i)
 =\gamma\rho(\Sph^n)=0.
\]
Since $(-1)^ic_\rho(p_0,\ldots,\widehat p_i,\ldots,p_{n+1})=\sgn(d_i)\rho(\Delta_i)$, this is the following identity.

\begin{lemma}\label{lem:gpcocycle}
For every general-position tuple $(p_0,\ldots,p_{n+1})$,
\[
 \sum_{i=0}^{n+1}(-1)^i
 c_\rho(p_0,\ldots,\widehat p_i,\ldots,p_{n+1})=0.
\]
\end{lemma}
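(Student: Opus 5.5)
The identity is essentially a relabelling of the Wang--Wu relation \eqref{eq:spherical-indicator-identity}, pushed through \autoref{lem:cutting}. I would first apply \autoref{lem:cutting} to the indicator relation and then identify each resulting term with the corresponding oriented cochain value.

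\emph{Step 1: transfer the indicator identity to $\rho$.} Fix a general-position tuple $(p_0,\ldots,p_{n+1})$. General position makes each $n+1$ of the vectors a basis of $\R^{n+1}$, so each $\Delta_i$ is a nondegenerate spherical $n$-simplex and lies in $\Ps(\Sph^n)$; moreover $d_i\neq0$. The set $\Sph^n$ itself is in $\Ps(\Sph^n)$ as the link of the polyhedral cone $\R^{n+1}$. Relation \eqref{eq:spherical-indicator-identity} holds off a finite union $\Sigma$ of great hyperspheres. I would therefore apply \autoref{lem:cutting} with the $q=n+3$ polytopes $\Delta_0,\ldots,\Delta_{n+1},\Sph^n$ and coefficients $\sgn(d_0),\ldots,\sgn(d_{n+1}),-\gamma$. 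This gives
\[
 \sum_{i=0}^{n+1}\sgn(d_i)\rho(\Delta_i)=\gamma\rho(\Sph^n)=0,
\]
where the last equality uses the standing hypothesis $\rho(\Sph^n)=0$.

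\emph{Step 2: identify the terms with cochain values.} By \eqref{eq:oriented-cochain},
\[
 (-1)^ic_\rho(p_0,\ldots,\widehat p_i,\ldots,p_{n+1})
 =(-1)^i\sgn\det(p_0,\ldots,\widehat p_i,\ldots,p_{n+1})\,\rho(\Delta_i)
 =\sgn(d_i)\rho(\Delta_i).
\]
The second equality holds because $(-1)^i=\sgn((-1)^i)$ and $\sgn$ is multiplicative. Summing over $i$ and using Step~1 gives the claimed identity.

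\emph{Main obstacle.} The only nontrivial input is the Wang--Wu indicator identity, which the paper already takes from \cite{WangWu}, including its validity for $n=1$. The one thing I would check carefully is the hypothesis of \autoref{lem:cutting}: the exceptional set $\Sigma$ must be a finite union of great hyperspheres. I would verify this by taking the hyperspheres spanned by each $n$-subset of the $p_i$; these contain all the boundaries of the $\Delta_i$, off which the indicator relation is stated.
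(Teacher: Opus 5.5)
Your proposal is correct and follows the paper's own argument. You apply \autoref{lem:cutting} to the Wang--Wu identity \eqref{eq:spherical-indicator-identity}, with $\Sph^n$ carrying coefficient $-\gamma$, use $\rho(\Sph^n)=0$, and identify $\sgn(d_i)\rho(\Delta_i)$ with $(-1)^ic_\rho(p_0,\ldots,\widehat p_i,\ldots,p_{n+1})$; your extra check on $\Sigma$ is harmless but unnecessary, since the cited identity already holds off a finite union of great hyperspheres.
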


We use the averaging construction of Wang and Wu
\cite[Equation~(6.3) and Lemma~6.4]{WangWu}.
The next lemma uses apex measurability and boundedness in place of
their continuity assumption.

\begin{lemma}\label{lem:averaged-primitive}
For $n\geq1$, suppose that $\lambda\in\cV_n$ satisfies $\lambda(\Sph^n)=0$.
For every $p_0,\ldots,p_{n-1}\in\Sph^n$, the integral
\[
 b_\lambda(p_0,\ldots,p_{n-1})\coloneqq
 \frac1{\omega_{n+1}}\int_{\Sph^n}
 c_\lambda(x,p_0,\ldots,p_{n-1})\dd\sigma_n(x)
\]
converges absolutely.  For linearly independent $p_0,\ldots,p_n$,
\begin{equation}\label{eq:cdb}
 c_\lambda(p_0,\ldots,p_n)
 =\sum_{i=0}^n(-1)^i
    b_\lambda(p_0,\ldots,\widehat p_i,\ldots,p_n).
\end{equation}
\end{lemma}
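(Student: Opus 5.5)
The plan has two parts: first show that $b_\lambda$ is well defined, then integrate the cocycle identity of \autoref{lem:gpcocycle} over the free vertex.

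\emph{Absolute convergence.} Fix $p_0,\ldots,p_{n-1}$. If they are linearly dependent, then $\det(x,p_0,\ldots,p_{n-1})=0$ for every $x$, so the integrand vanishes identically. Otherwise let $U=\operatorname{span}\{p_0,\ldots,p_{n-1}\}$ and $Q=\Delta(p_0,\ldots,p_{n-1})$, which is an $(n-1)$-dimensional polytope in $\Sph(U)$. For $x\notin\Sph(U)$ we have $\Delta(x,p_0,\ldots,p_{n-1})=x*Q$, and $\sgn\det(x,p_0,\ldots,p_{n-1})$ is constant on each of the two components $\Omega_\pm$ of $\Sph^n\setminus\Sph(U)$. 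The set $\Sph(U)$ is $\sigma_n$-null.

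Now use a representation \eqref{eq:regularity-class} of $\lambda$. Since $x*Q$ is full-dimensional, $\lambda(x*Q)=\nu(x*Q)+\sum_\alpha a_\alpha\Phi_\alpha(x*Q)$. The term $x\mapsto\nu(x*Q)$ is continuous by \autoref{lem:coning-continuity}, and each $x\mapsto\Phi_\alpha(x*Q)$ is measurable on $\Omega_\pm$ by \autoref{lem:apex}. That lemma is stated for full-dimensional $Q$ in $\Sph(U)$, which holds here since $\dim U=n$. Boundedness of $\lambda$ then gives absolute convergence, with $|b_\lambda|\le\|\lambda\|_\infty$.

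\emph{The identity \eqref{eq:cdb}.} Fix linearly independent $p_0,\ldots,p_n$. For $\sigma_n$-almost every $x$, the tuple $(x,p_0,\ldots,p_n)$ is in general position, since we only need to exclude the $n+1$ great hyperspheres spanned by $n$ of the $p_i$. For such $x$, \autoref{lem:gpcocycle} applied to the tuple $(x,p_0,\ldots,p_n)$ gives
\[
 c_\lambda(p_0,\ldots,p_n)=\sum_{i=0}^n(-1)^i c_\lambda(x,p_0,\ldots,\widehat p_i,\ldots,p_n).
\]
To get this form, note that the $i=0$ term of the cocycle sum is $c_\lambda(p_0,\ldots,p_n)$. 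Removing $p_i$ sits at position $i+1$, which produces the sign $(-1)^{i+1}$; moving everything except the $i=0$ term to the other side flips this to $(-1)^i$.

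Here \autoref{lem:gpcocycle} requires $\rho=\lambda$ to be a simple valuation on $\Ps(\Sph^n)$ with $\rho(\Sph^n)=0$, and both hold by hypothesis. The left-hand side does not depend on $x$. Integrating against $\omega_{n+1}^{-1}\sigma_n$, a probability measure, and using the integrability established in the first part, linearity of the integral yields \eqref{eq:cdb}.

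\emph{Main obstacle.} The delicate step is measurability of $x\mapsto c_\lambda(x,\dots)$ without continuity of $\lambda$. This is exactly where the decomposition in $\cV_n$, together with \autoref{lem:apex} applied separately on each hemisphere $\Omega_\pm$, is needed. Each resulting term is bounded, so dominated integrability follows immediately.
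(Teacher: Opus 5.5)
Your proposal is correct and follows the paper's proof essentially step for step. Absolute convergence comes from the $\cV_n$ decomposition: continuity of $x\mapsto\nu(x*Q)$ by \autoref{lem:coning-continuity}, measurability of the $\Phi_\alpha$-terms on each hemisphere by \autoref{lem:apex}, and the bound $\|\lambda\|_\infty$. The identity then comes from integrating \autoref{lem:gpcocycle} for $(x,p_0,\ldots,p_n)$ off the $\sigma_n$-null union of the $n+1$ great hyperspheres $\Sph(W_i)$, and your sign bookkeeping for that step is right.
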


\begin{proof}
We first check absolute convergence.
For a linearly dependent $(p_0,\ldots,p_{n-1})$, the determinant and hence
the integrand vanish.  Otherwise put
$W=\operatorname{span}\{p_0,\ldots,p_{n-1}\}$ and
$Q=\Delta(p_0,\ldots,p_{n-1})$.
For $x\notin\Sph(W)$, substituting \eqref{eq:regularity-class} into
\eqref{eq:oriented-cochain} yields
\[
 c_\lambda(x,p_0,\ldots,p_{n-1})
 =\sgn\det(x,p_0,\ldots,p_{n-1})
   \left(\nu(x*Q)+\sum_{\alpha=1}^q a_\alpha\Phi_\alpha(x*Q)\right).
\]
The sign is constant on each component, the $\nu$-term is continuous by
\autoref{lem:coning-continuity}, and the $\Phi_\alpha$-terms are measurable
by \autoref{lem:apex}.
The integrand is zero on $\Sph(W)$ and bounded in absolute value by
$\|\lambda\|_\infty$, which proves absolute integrability.

For the identity \eqref{eq:cdb}, fix linearly independent $p_0,\ldots,p_n$ and set
$W_i=\operatorname{span}\{p_0,\ldots,\widehat p_i,\ldots,p_n\}$.
The tuple $(x,p_0,\ldots,p_n)$ is in general position outside
\[
\Sigma=\bigcup_{i=0}^n\Sph(W_i).
\]
Each $\Sph(W_i)$ is a great hypersphere, so $\sigma_n(\Sigma)=0$.
By \autoref{lem:gpcocycle},
\[
 c_\lambda(p_0,\ldots,p_n)
 =\sum_{i=0}^n(-1)^i
 c_\lambda(x,p_0,\ldots,\widehat p_i,\ldots,p_n)
 \qquad(x\notin\Sigma).
\]
Integrating the absolutely integrable terms against
$\omega_{n+1}^{-1}\sigma_n$ proves \eqref{eq:cdb}.
\end{proof}

The averaged simplex identity \eqref{eq:cdb} connects the original
valuation to signed coning.  By \autoref{prop:T}, the transforms remain
in the class defined by \eqref{eq:regularity-class} in one lower dimension,
so the vanishing argument proceeds by
induction without a continuity assumption.  This adapts the argument of
\cite[Corollary~7.3, Proposition~7.4, and Proposition~8.2]{WangWu}.

\begin{proposition}\label{prop:vanishing}
For $n\geq1$, let $\lambda\in\cV_n$ be $\SO(n+1)$-invariant.
If $\lambda(\Sph^n)=0$, then $\lambda=0$.
\end{proposition}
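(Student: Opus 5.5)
Argue by induction on $n$, following \cite[Corollary~7.3, Proposition~7.4, Proposition~8.2]{WangWu}, with \autoref{prop:T} and \autoref{lem:averaged-primitive} replacing their continuity-based transform and averaging statements.

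\emph{Step 1: relate $b_\lambda$ to signed coning.} Let $p_0,\ldots,p_{n-1}$ be linearly independent, $W=\operatorname{span}\{p_0,\ldots,p_{n-1}\}$, and let $e$ be the unit normal to $W$ with $\det(e,p_0,\ldots,p_{n-1})>0$. For $x\notin\Sph(W)$ we have $\sgn\det(x,p_0,\ldots,p_{n-1})=\sgn\langle x,e\rangle$ and $\Delta(x,p_0,\ldots,p_{n-1})=x*Q$ with $Q=\Delta(p_0,\ldots,p_{n-1})$. Hence
\[
 b_\lambda(p_0,\ldots,p_{n-1})=(T_{W,e}\lambda)(Q).
\]
By \autoref{prop:T}, $\mu\coloneqq T_{W,e}\lambda$ is an $\SO(W)$-invariant simple valuation in $\cV_{n-1}$ satisfying \eqref{eq:signed-coning-equivariance}.

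\emph{Step 2: show $\mu=0$.}
\begin{itemize}
\item First compute $\mu(\Sph(W))$. Reflecting $x\mapsto$ its mirror image in $W$ sends the coning sets over $\Sph(W)$ in the two hemispheres to each other. Equivalently, apply \eqref{eq:signed-coning-equivariance} with $h=-\mathrm{id}$ in suitable parity, or use $\SO(n+1)$-invariance with $g$ fixing $W$ pointwise up to a reflection. Then $x*\Sph(W)$ is a closed hemisphere, and $\lambda$ takes the same value on both hemispheres. So the signed integral vanishes, $\mu(\Sph(W))=0$.
\item For $n-1\geq1$, the induction hypothesis (applied on $\Sph(W)\cong\Sph^{n-1}$, with $\SO(W)$-invariance) gives $\mu=0$.
\item Base case $n=1$: then $W$ is a line and $\Sph(W)=\{\pm p\}$. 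The valuation $\mu$ on $\Ps(\Sph^0)$ is determined by $\mu(\{p\})$. Apply \eqref{eq:signed-coning-equivariance} with $h=-\mathrm{id}_W$, $\det h=-1$: this gives $\mu(\{-p\})=-\mu(\{p\})$. Simplicity for $n-1=0$ is vacuous, so I would rather argue directly. By rotation invariance in $\Sph^1$, $\lambda$ on arcs is a function $f$ of length, additive by simplicity, bounded, and zero at length $2\pi$. So $f(t)=ct$ by \autoref{lem:cauchy}, and $c=0$. This settles $n=1$ without Step 1.
\end{itemize}

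\emph{Step 3: conclude.} With $b_\lambda\equiv0$, \eqref{eq:cdb} gives $\lambda(\Delta(p_0,\ldots,p_n))=0$ for every nondegenerate simplex. For a full-dimensional proper polytope, \autoref{lem:triangulation} and simplicity (inclusion–exclusion along the face-to-face triangulation, where lower-dimensional intersections vanish) give $\lambda=0$. Nonproper polytopes are cut by finitely many great hyperspheres into proper full-dimensional cells, so \eqref{eq:finite-cut-sum} gives $\lambda=0$ on $\Ps(\Sph^n)$. Lower-dimensional sets vanish by simplicity.

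\emph{Main obstacle.} The delicate point is Step 2's verification $\mu(\Sph(W))=0$ together with matching the invariance hypotheses of the induction: \autoref{prop:T} yields only $\SO(W)$-invariance. I would first try choosing $g\in\SO(n+1)$ that fixes $e$-direction sign appropriately, so as to compare the two hemispheres directly.
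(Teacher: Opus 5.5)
Your proposal is correct and follows the paper's proof. The common steps are the Cauchy-equation base case for $n=1$, the identification $b_\lambda(p_0,\ldots,p_{n-1})=(T_{W,e}\lambda)(\Delta(p_0,\ldots,p_{n-1}))$, induction through \autoref{prop:T}, and recovery of $\lambda$ via \eqref{eq:cdb}, \autoref{lem:triangulation} and \autoref{lem:cutting}. The only variation is that you obtain $(T_{W,e}\lambda)(\Sph(W))=\tfrac12\bigl(\lambda(\overline S_e^+)-\lambda(\overline S_e^-)\bigr)=0$ directly from rotation invariance, whereas the paper uses \eqref{eq:signed-coning-equivariance} with some $h\in\Orth(W)$ of determinant $-1$. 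Your stated ``main obstacle'' is not one, since the induction hypothesis in dimension $n-1$ only requires $\SO(W)$-invariance.
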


\begin{proof}
We argue by induction on $n$.
For $n=1$, $\lambda$ vanishes on points by simplicity.
By rotation invariance, $f(t)=\lambda(I_t)$ depends only on the length
$t\in(0,\pi)$ of a proper closed arc.
The valuation identity on adjacent arcs reads
$f(s+t)=f(s)+f(t)$ whenever $s,t>0$ and $s+t<\pi$.
Boundedness and \autoref{lem:cauchy} yield $f(t)=at$.
Cutting $\Sph^1$ by the two coordinate lines and using simplicity, we obtain
\[
 0=\lambda(\Sph^1)=4f(\pi/2)=2\pi a.
\]
Hence $a=0$, and $\lambda$ vanishes on every proper spherical polytope.
Cutting an arbitrary spherical polytope into proper cells and applying
\eqref{eq:finite-cut-sum} extends this conclusion to all of $\Ps(\Sph^1)$.

Now assume $n\geq2$ and the result in dimension $n-1$.
We show that the signed-coning transforms vanish, then recover $\lambda$
from the averaged primitive.
For an $n$-dimensional subspace $W$ and a unit normal $e$,
$T_{W,e}\lambda$ is an $\SO(W)$-invariant member of $\cV_{n-1}$
by \autoref{prop:T}.
Choose $h\in\Orth(W)$ with $\det h=-1$.
Since $h\Sph(W)=\Sph(W)$, \eqref{eq:signed-coning-equivariance} implies
\[
 (T_{W,e}\lambda)(\Sph(W))
 =-(T_{W,e}\lambda)(\Sph(W))=0.
\]
Consequently, $T_{W,e}\lambda=0$ by induction.

To relate these transforms to $b_\lambda$, take independent
$p_0,\ldots,p_{n-1}$ and set
$W=\operatorname{span}\{p_0,\ldots,p_{n-1}\}$.
Choose the unit normal $e$ to $W$ with $\det(e,p_0,\ldots,p_{n-1})>0$.
The orthogonal decomposition $x=w+\langle x,e\rangle e$, $w\in W$, gives
\[
 \det(x,p_0,\ldots,p_{n-1})
 =\langle x,e\rangle\det(e,p_0,\ldots,p_{n-1}).
\]
Consequently,
\[
 \sgn\det(x,p_0,\ldots,p_{n-1})=\sgn\langle x,e\rangle\quad\text{and}\quad\Delta(x,p_0,\ldots,p_{n-1})=x*\Delta(p_0,\ldots,p_{n-1}),
\]
for $x\notin\Sph(W)$.
On $\Sph(W)$ the determinant is zero, so
\[
 b_\lambda(p_0,\ldots,p_{n-1})
 =(T_{W,e}\lambda)\bigl(\Delta(p_0,\ldots,p_{n-1})\bigr)=0.
\]
If $p_0,\ldots,p_{n-1}$ are dependent, then
$\det(x,p_0,\ldots,p_{n-1})=0$ for every $x$; hence
$c_\lambda(x,p_0,\ldots,p_{n-1})=0$ and
$b_\lambda(p_0,\ldots,p_{n-1})=0$.
Substitution in \eqref{eq:cdb} yields $c_\lambda=0$ on independent
$(n+1)$-tuples, hence $\lambda$ vanishes on nondegenerate proper
spherical $n$-simplices.  By simplicity, \autoref{lem:triangulation}, and
\autoref{lem:cutting}, it vanishes on all proper spherical polytopes.
For arbitrary $P\in\Ps(\Sph^n)$, let $\mathcal C(P)$ be the
full-dimensional proper cells contained in $P$ in the refinement
from the proof of \autoref{lem:cutting}.  Formula~\eqref{eq:finite-cut-sum} yields
\[
 \lambda(P)=\sum_{C\in\mathcal C(P)}\lambda(C)=0.
\]
\end{proof}

\section{Spherical classifications}\label{sec:spherical-classifications}
We first classify valuations on proper spherical polytopes and then pass to proper closed spherical convex sets.  Then the fixed-fan extension  determines the values on all closed spherical convex sets.
\subsection{Classification on proper spherical polytopes}
To apply \autoref{prop:vanishing}, we need an extension from proper to
arbitrary spherical polytopes.
Wang and Wu \cite[Lemmas~3.1--3.2]{WangWu} construct the unique
valuation extension from proper to all closed spherical convex sets
using a fixed finite fan.  Their proof uses only finite cuts and
orthogonal images, which preserve spherical polytopes.

Fix an orthonormal basis $e_1,\ldots,e_{n+1}$.  For
$\tau\in\{-1,0,1\}^{n+1}\setminus\{0\}$ define
\[
 R_\tau\coloneqq\{x\in\Sph^n:
  \tau_i\langle x,e_i\rangle\geq0\ \text{if }\tau_i\neq0,\quad
  \langle x,e_i\rangle=0\ \text{if }\tau_i=0\},
\]
and $z(\tau)\coloneqq\#\{i:\tau_i=0\}$.  Each $R_\tau$ is compactly
contained in an open hemisphere because
\[
 \left\langle x,\sum_{\tau_i\neq0}\tau_i e_i\right\rangle
 =\sum_{\tau_i\neq0}|\langle x,e_i\rangle|>0
 \qquad(x\in R_\tau).
\]

For a valuation $\mu$ on either $\Psp(\Sph^n)$ or $\Ksp(\Sph^n)$,
define $\overline\mu$ on $\Ps(\Sph^n)$ or $\Ks(\Sph^n)$, respectively, by
\begin{equation}\label{eq:fan}
 \overline\mu(K)\coloneqq
 \sum_{\tau\neq0}(-1)^{z(\tau)}\mu(K\cap R_\tau).
\end{equation}
\begin{lemma}[{\cite[Lemma~3.2]{WangWu}}]\label{lem:fan}
Formula~\eqref{eq:fan} defines the unique valuation extension of $\mu$.
The extension preserves $\SO(n+1)$-invariance and simplicity.
\end{lemma}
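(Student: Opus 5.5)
Write $\mathcal A$ for $\Psp(\Sph^n)$ or $\Ksp(\Sph^n)$ and $\overline{\mathcal A}$ for $\Ps(\Sph^n)$ or $\Ks(\Sph^n)$. The plan is to verify the three assertions of \autoref{lem:fan} in order: $\overline\mu$ restricts to $\mu$ and is a valuation, it is the only such extension, and it inherits invariance and simplicity.

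\textbf{Extension.} Take $K\in\mathcal A$ nonempty, so $K$ lies in an open hemisphere $S_u^+$. Every $K\cap R_\tau$ is a proper set, or empty. I would then show that the signed sum in \eqref{eq:fan} returns $\mu(K)$, by an inclusion--exclusion over the complex $\{R_\tau\}$ restricted to $K$. The indicator identity I would use is
\[
\sum_{\tau\ne0}(-1)^{z(\tau)}\one_{R_\tau}=\one_{\Sph^n}\cdot(\text{Euler characteristic of the link}).
\]
Pointwise, this sum equals the alternating count of cells containing $x$. Concretely, for a fixed $x$, the cells containing it are indexed by choosing, for each coordinate with $\langle x,e_i\rangle=0$, either $\tau_i=0$ or a sign. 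For $x\neq0$, the alternating count over that cube of choices gives $(-1)^{m}$ times a product of factors $(2-1)$, which equals $1$ after the sign conventions.

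To pass from this indicator identity to valuation values, I would use the inclusion--exclusion (Groemer-type) extension of $\mu$ on the lattice generated by $K\cap R_\tau$ and $K\cap R_{\tau'}$. This works because $R_\tau\cap R_{\tau'}=R_{\tau\wedge\tau'}$, which stays in $\mathcal A$. The fan sets are closed under intersection and their unions over cubes are unions of proper polyhedral pieces, so the identity on indicators transfers. The transfer is cleanest when done cone-by-cone via the valuation identity for two adjacent cells $R_\tau\cup R_{\tau'}$ that share a facet, iterating coordinate by coordinate. The valuation property of $\overline\mu$ follows termwise: $(K\cup L)\cap R_\tau=(K\cap R_\tau)\cup(L\cap R_\tau)$, and each such union lies in $\mathcal A$ whenever $K\cup L\in\overline{\mathcal A}$.

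\textbf{Uniqueness.} Let $\mu'$ be any valuation extension to $\overline{\mathcal A}$. Cutting $K$ successively by the coordinate hyperspheres, with the valuation identity at each step as in \eqref{eq:spherical-single-cut} but keeping the lower-dimensional term, expresses $\mu'(K)$ as exactly the signed sum \eqref{eq:fan} of values on proper pieces. Those values are prescribed by $\mu$, so $\mu'=\overline\mu$.

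\textbf{Invariance and simplicity.} Simplicity is immediate: if $\dim K<n$, then every $K\cap R_\tau$ has dimension less than $n$, so every term of \eqref{eq:fan} vanishes. For invariance, fix $g\in\SO(n+1)$ and compare $\overline\mu\circ g$ with $\overline\mu$. The map $\overline\mu\circ g$ is also a valuation extension of $\mu$: it is the fan extension of $\mu\circ g=\mu$ relative to the rotated basis. By uniqueness, which does not depend on the chosen basis, the two coincide.

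\textbf{Main obstacle.} The hardest step is the first one: proving that the signed sum restricted to a proper $K$ reproduces $\mu(K)$ for a merely additive $\mu$ with no continuity. I would handle it by induction on the number of coordinates, collapsing pairs $R_{(\dots,+,\dots)}$ and $R_{(\dots,-,\dots)}$ against $R_{(\dots,0,\dots)}$ via the two-set valuation identity.
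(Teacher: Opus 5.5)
Your proof is correct and follows the route the paper attributes to Wang and Wu: iterated two-set cuts along the coordinate great hyperspheres give uniqueness, and the same cuts applied to $\mu$ itself on a proper $K$, where every piece stays proper, give the extension property, while the termwise valuation identity, simplicity, and invariance via uniqueness applied to $\overline\mu\circ g$ work as you describe. The extension step is therefore no harder than uniqueness, and you should drop the detour through the indicator identity and a Groemer-type inclusion--exclusion: Groemer's extension theorem assumes continuity, which is not available for $\mu$ on $\Ksp(\Sph^n)$, and your pointwise count is garbled anyway (each zero coordinate contributes $1+1-1=1$).
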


The restriction to great hyperspheres and subtraction of intrinsic-volume
terms in the next proof follow Wang and Wu
\cite[proof of Proposition~4.2]{WangWu}.

\begin{proposition}\label{prop:polyhedral-classification}
For $n\geq1$, let $\mu:\Ksp(\Sph^n)\to\R$ be an
$\SO(n+1)$-invariant valuation that is monotone on nonempty sets.
Then there exist unique $c_0,\ldots,c_n\in\R$ such that
\[
 \mu=\sum_{j=0}^n c_jU_j
 \qquad\text{on }\Psp(\Sph^n).
\]
\end{proposition}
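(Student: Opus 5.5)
We argue by induction on $n$, with the inductive step reduced to \autoref{prop:vanishing}. The statement also holds for $n=0$: there $\Ksp(\Sph^0)$ consists of $\varnothing$ and points, and $\mu=c_0U_0$. Uniqueness is immediate. The functions $U_j$ restricted to $\Psp(\Sph^n)$ are linearly independent, because for each $j$ there is a proper $j$-dimensional simplex $P$ with $U_j(P)>0$ and $U_i(P)=0$ for $i>j$, by \eqref{eq:dimension-vanishing} and positivity of $v_{j+1}(P^\vee)$. So we concentrate on existence.

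By \autoref{lem:normalize} we may subtract $2\mu(\{p_0\})U_0$ and assume that $\mu$ vanishes on points and is monotone. For $n\geq1$, fix a great hypersphere $H=\Sph(W)$. The restriction of $\mu$ to proper sets in $H$ is an $\SO(W)$-invariant monotone valuation, since $\SO(W)$ embeds in $\SO(n+1)$ by fixing the normal. By the inductive hypothesis (in dimension $n-1$, with base case $n=1$ handled directly below) it equals $\sum_{j=0}^{n-1}c_jU_j$ on $\Psp(H)$. The $U_j$ are invariant under isometric enlargement, and every lower-dimensional proper polytope lies in some rotate of $H$. Hence $\lambda_0\coloneqq\mu-\sum_{j<n}c_jU_j$ is an $\SO(n+1)$-invariant valuation on $\Psp(\Sph^n)$ that vanishes on lower-dimensional polytopes, i.e.\ it is simple.

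For $n=1$ there is no real inductive input. Points give $c_0$, and arcs of length $t$ give an additive function $f(t)$ on $(0,\pi)$ which is bounded by monotonicity ($0\leq f(t)\leq\mu(I)$ for a longer arc $I$). \autoref{lem:cauchy} then yields $f(t)=ct$, and $U_1(I_t)=t/(2\pi)$.

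In dimension $n$, the next step is to remove the $U_n$ part and place the remainder in $\cV_n$. Let $\overline{\lambda_0}$ be its fan extension from \autoref{lem:fan}; it is simple and $\SO(n+1)$-invariant on $\Ps(\Sph^n)$. Set $c_n\coloneqq\overline{\lambda_0}(\Sph^n)$ and $\lambda\coloneqq\overline{\lambda_0}-c_n\overline{U_n}$, where $\overline{U_n}=v_n^{\mathrm s}$ is the normalized volume. Then $\lambda(\Sph^n)=0$, and it remains to verify $\lambda\in\cV_n$.

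Verifying $\lambda\in\cV_n$ is the main obstacle. On full-dimensional $P$, formula~\eqref{eq:fan} writes $\overline{\lambda_0}(P)$ as a finite signed sum of $\mu(P\cap R_\tau)$ and of the continuous terms $c_jU_j(P\cap R_\tau)$. The terms $P\cap R_\tau$ with $z(\tau)>0$ are lower-dimensional, so $\lambda_0$ vanishes on them by simplicity. Only $\mu(P\cap R_\tau)$ with $z(\tau)=0$ remain, and $P\mapsto\mu(P\cap R_\tau)$ is bounded and monotone under inclusion on $n$-dimensional polytopes, because $\mu$ is monotone and bounded by its value on the proper set $R_\tau$. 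These terms therefore serve as the $\Phi_\alpha$. The discontinuous pieces $U_j(P\cap R_\tau)$ need separate care. I would rather absorb them by writing $U_j(P\cap R_\tau)$ as monotone functions of $P$ with coefficient $-c_j$; this is legitimate because $U_j$ is monotone and bounded. The remaining term $-c_nv_n^{\mathrm s}$ is continuous and plays the role of $\nu$. Boundedness of $\lambda$ on all of $\Ps(\Sph^n)$ follows from the same fan formula.

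With $\lambda\in\cV_n$ established, \autoref{prop:vanishing} gives $\lambda=0$. Hence $\lambda_0=c_nU_n$ on $\Psp(\Sph^n)$, because the fan extension restricts to the original valuation and $\overline{U_n}=U_n$ on proper sets. This gives the stated representation $\mu=\sum_{j=0}^nc_jU_j$ on $\Psp(\Sph^n)$.
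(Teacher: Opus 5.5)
Your proof is correct and follows essentially the paper's route: normalize at points, induct by restricting to a great hypersphere, fan-extend the simple residual, subtract $\overline{\lambda_0}(\Sph^n)\,v_n^{\mathrm s}$, and verify membership in $\cV_n$ through the bounded monotone functions $P\mapsto\mu(P\cap R_\tau)$ before invoking \autoref{prop:vanishing}. You make two harmless variations: you treat the pieces $U_j(P\cap R_\tau)$ as extra signed monotone terms, where the paper recombines them into the continuous $\nu(P)$ via uniqueness of the fan extension, and you prove uniqueness by a triangular evaluation on $j$-dimensional proper simplices; the only slip is your unused remark that the statement holds for $n=0$, which is false because $\SO(1)$ is trivial and the two points of $\Sph^0$ may take different values.
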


\begin{proof}
Fix $p\in\Sph^n$ and put
$\mu^\circ=\mu-\mu(\{p\})\chi_{\mathrm s}$.
By \autoref{lem:normalize}, $\mu^\circ$ is invariant, monotone including
comparisons with the empty set, and zero on points.
Since $\chi_{\mathrm s}=2U_0$ and
$v_j^{\mathrm s}=U_j-U_{j+2}$ by \eqref{eq:U-v}, it suffices for existence to
express $\mu^\circ$ as a linear combination of the spherical intrinsic
volumes.  We proceed by induction on $n$.

For $n=1$, $f(t)\coloneqq\mu^\circ(I_t)$ depends only on the length
$t\in(0,\pi)$ of a proper closed arc $I_t$, by rotation invariance.
The valuation identity for adjacent arcs, whose common endpoint has value zero, is
$f(s+t)=f(s)+f(t)$ for $s,t>0$ with $s+t<\pi$.
Comparison of nested arcs and the empty set gives
\[
 0\leq f(t)\leq f(\pi/2)\qquad(0<t<\pi/2).
\]
Thus \autoref{lem:cauchy} implies $f(t)=bt$ for some $b\in\R$.
Since $v_1^{\mathrm s}(I_t)=t/(2\pi)$ and both valuations vanish on
points and the empty set, $\mu^\circ=2\pi b\,v_1^{\mathrm s}$ on
$\Psp(\Sph^1)$.

Assume $n\geq2$ and the result in dimension $n-1$.  Fix an
$n$-dimensional linear subspace $W\subsetneq\R^{n+1}$.
Every $h\in\SO(W)$ extends by the identity on $W^\perp$ to an element
of $\SO(n+1)$.  Hence, the restriction of $\mu^\circ$ to
$\Ksp(\Sph(W))$ satisfies the induction hypothesis.
By induction, \eqref{eq:U-v}, and invariance under enlargement of the
ambient space, there are coefficients $a_0,\ldots,a_{n-1}\in\R$ such that
\[
 \mu^\circ(Q)=\sum_{j=0}^{n-1}a_jv_j^{\mathrm s}(Q)
 \qquad(Q\in\Psp(\Sph(W))).
\]
Define $\nu=\sum_{j=0}^{n-1}a_jv_j^{\mathrm s}$ on $\Ks(\Sph^n)$ and
$\eta=\mu^\circ-\nu$ on $\Psp(\Sph^n)$.
The valuation $\nu$ is continuous and invariant.
For each nonempty $P\in\Psp(\Sph^n)$ with $\dim P<n$, choose
$g\in\SO(n+1)$ such that $gP\subseteq\Sph(W)$.  Then
\[
 \eta(P)=\mu^\circ(gP)-\nu(gP)=0.
\]
Thus, $\eta$ is simple and invariant, and its extension
$\overline\eta$ from \autoref{lem:fan} has the same properties.

To apply \autoref{prop:vanishing}, set $a_n=\overline\eta(\Sph^n)$ and
$\lambda=\overline\eta-a_n v_n^{\mathrm s}$. For $\dim P<n$, we have
$v_n^{\mathrm s}(P)=0$ by  \eqref{eq:dimension-vanishing}.
Also, $v_n^{\mathrm s}(\Sph^n)=1$.
Thus $\lambda$ is simple and invariant, and $\lambda(\Sph^n)=0$.
It remains to verify that $\lambda\in\cV_n$.
For full-dimensional $P\in\Ps(\Sph^n)$, define
$\Phi_\tau(P)\coloneqq\mu^\circ(P\cap R_\tau)$ for each $\tau\neq0$.
The spherical polytope $R_\tau$ is proper, and monotonicity of $\mu^\circ$ implies
\[
 0\leq\Phi_\tau(P)\leq\Phi_\tau(Q)\leq\mu^\circ(R_\tau)
 \qquad(P\subseteq Q,\ \dim P=\dim Q=n),
\]
including empty intersections.  Hence
$\Phi_\tau\in\mathcal M_n(\Sph^n)$.

By uniqueness in \autoref{lem:fan}, the extension of
$\nu|_{\Psp(\Sph^n)}$ is $\nu|_{\Ps(\Sph^n)}$.
Expanding \eqref{eq:fan} therefore yields
\begin{equation}\label{eq:residual-decomposition}
 \lambda(P)=-\sum_{j=0}^n a_jv_j^{\mathrm s}(P)
 +\sum_{\tau\neq0}(-1)^{z(\tau)}\Phi_\tau(P)
 \qquad(\dim P=n).
\end{equation}
The first term is a bounded continuous valuation.  Since $\lambda$ vanishes on
lower-dimensional spherical polytopes and $0\leq v_j^{\mathrm s}\leq1$,
\[
 \|\lambda\|_\infty
 \leq\sum_{j=0}^n|a_j|+\sum_{\tau\neq0}\|\Phi_\tau\|_\infty
 \leq\sum_{j=0}^n|a_j|+\sum_{\tau\neq0}\mu^\circ(R_\tau)<\infty.
\]
Thus \eqref{eq:residual-decomposition} verifies
\eqref{eq:regularity-class}, and $\lambda\in\cV_n$.
By \autoref{prop:vanishing}, $\lambda=0$.
Since $\overline\eta=\eta$ on proper spherical polytopes,
\[
 \mu^\circ(P)=\nu(P)+a_n v_n^{\mathrm s}(P)
 =\sum_{j=0}^n a_jv_j^{\mathrm s}(P)
 \qquad(P\in\Psp(\Sph^n)).
\]

For uniqueness, suppose that $\sum_{k=0}^n\alpha_kv_k^{\mathrm s}=0$
on $\Psp(\Sph^n)$.  By uniqueness in \autoref{lem:fan},
the same identity holds on $\Ps(\Sph^n)$.
For $0\leq j\leq n$ and a $(j+1)$-dimensional subspace $E\subseteq\R^{n+1}$,
\[
 0=\sum_{k=0}^n\alpha_kv_k^{\mathrm s}(\Sph(E))=\alpha_j.
\]
For the $U_j$-representation, \eqref{eq:U-v} and the point-value normalization imply
\[
 \mu=2\mu(\{p\})U_0+\sum_{j=0}^n a_j(U_j-U_{j+2})
 \qquad\text{on }\Psp(\Sph^n).
\]
The change of coordinates in \eqref{eq:U-v} is triangular with diagonal
entries one, so the resulting coefficients $c_0,\ldots,c_n$ are unique.
\end{proof}

\subsection{Approximation and coefficient signs}

The representing linear combination is continuous.  To extend equality
from spherical polytopes to proper closed spherical convex sets by
monotonicity, we construct approximations from both inside and outside.

For a nonempty compact convex set $L\subseteq\R^n$ and $\varepsilon>0$,
choose a finite $\varepsilon$-net $F_\varepsilon\subseteq L$ and put
$P_\varepsilon\coloneqq\operatorname{conv}F_\varepsilon$, as in
\cite[Theorem~1.8.16, p.~67]{SchneiderConvexBodies}.  Then
\[
 P_\varepsilon\subseteq L
 \subseteq F_\varepsilon+\varepsilon\overline B^n
 \subseteq P_\varepsilon+\varepsilon\overline B^n.
\]
Let $Q_\varepsilon$ be the convex hull of the vertices of all closed
coordinate grid cubes of side $\varepsilon$ meeting $L$.
Since $L$ is bounded, only finitely many such cubes meet $L$.
They cover $L$ and, being convex hulls of their vertices, lie in $Q_\varepsilon$.
For a vertex $v$ of any selected cube $C$, choose $y\in C\cap L$.
Since $\|v-y\|\leq\operatorname{diam}C=\sqrt n\,\varepsilon$ and
$L+\sqrt n\,\varepsilon\overline B^n$ is convex,
\[
 L\subseteq Q_\varepsilon\subseteq L+\sqrt n\,\varepsilon\overline B^n.
\]
Consequently, we have the inclusions and Euclidean Hausdorff bounds
\begin{equation}\label{eq:elementary-approximation}
 P_\varepsilon\subseteq L\subseteq Q_\varepsilon,
 \qquad
 d(P_\varepsilon,L)\leq\varepsilon,
 \qquad
 d(Q_\varepsilon,L)\leq\sqrt n\,\varepsilon.
\end{equation}
The construction also applies when $\dim L<n$.

For nonempty $K\in\Ksp(\Sph^n)$, choose $e\in\Sph^n$ with $K\subseteq S_e^+$ and put
$\widetilde K\coloneqq g_e(K)$.  Applying \eqref{eq:elementary-approximation}
with $\varepsilon_k=(k\max\{1,\sqrt n\})^{-1}$ gives nonempty Euclidean polytopes
$\widetilde P_k,\widetilde Q_k\subseteq e^\perp$ such that
\begin{equation}\label{eq:spherical-euclidean-approximation}
 \widetilde P_k\subseteq\widetilde K\subseteq\widetilde Q_k,
 \qquad
 \max\{d(\widetilde P_k,\widetilde K),d(\widetilde Q_k,\widetilde K)\}
  \leq k^{-1}.
\end{equation}
The proper spherical polytopes $P_k\coloneqq g_e^{-1}(\widetilde P_k)$ and
$Q_k\coloneqq g_e^{-1}(\widetilde Q_k)$ satisfy
$\varnothing\neq P_k\subseteq K\subseteq Q_k$.
Since
\[
 \widetilde Q_k\subseteq\widetilde K+k^{-1}\overline B_{e^\perp}
 \subseteq\widetilde K+\overline B_{e^\perp},
\]
the sets $\widetilde P_k,\widetilde K,\widetilde Q_k$ lie in a common
compact Euclidean ball $B\subseteq e^\perp$. Define
\[
 \omega_B(t)\coloneqq\sup\{\operatorname{dist}_{\mathrm s}
 (g_e^{-1}(z),g_e^{-1}(w)):z,w\in B,\ \|z-w\|\leq t\}.
\]
The bounds~\eqref{eq:spherical-euclidean-approximation} and uniform continuity of $g_e^{-1}|_B$
imply the spherical Hausdorff estimate
\begin{equation}\label{eq:proper-spherical-approximation}
 \max\{d_{\mathrm s}(P_k,K),d_{\mathrm s}(Q_k,K)\}
 \leq\omega_B(k^{-1})\longrightarrow0.
\end{equation}

\begin{lemma}\label{lem:monotone-approximation}
Let $\mu,\nu:\Ksp(\Sph^n)\to\R$ be functions, with $\mu$ monotone
on nonempty sets and $\nu$ continuous.  If $\mu=\nu$ on
$\Psp(\Sph^n)$, then $\mu=\nu$ on $\Ksp(\Sph^n)$.
\end{lemma}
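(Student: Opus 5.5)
The plan is a sandwich argument using the two-sided polytopal approximations built just before the statement. On the empty set both functions are fixed by the valuation convention ($\mu(\varnothing)=\nu(\varnothing)=0$), and in any case $\varnothing\in\Psp(\Sph^n)$, so equality there is part of the hypothesis. So take a nonempty $K\in\Ksp(\Sph^n)$. Choose $e$ with $K\subseteq S_e^+$. The construction preceding the lemma, namely \eqref{eq:spherical-euclidean-approximation} transported by $g_e^{-1}$, gives proper spherical polytopes $P_k,Q_k$ with
\[
 \varnothing\neq P_k\subseteq K\subseteq Q_k .
\]
These satisfy the Hausdorff estimate \eqref{eq:proper-spherical-approximation}, so $P_k\to K$ and $Q_k\to K$ in $d_{\mathrm s}$. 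Both $P_k$ and $Q_k$ lie in $g_e^{-1}(e^\perp)=S_e^+$, so they are proper, and they are polytopes because $g_e$ carries Euclidean polytopes to spherical polytopes. Hence $P_k,Q_k\in\Psp(\Sph^n)$.

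The key step is the chain obtained from monotonicity on nonempty sets. Since $P_k\neq\varnothing$, the comparisons are legitimate, and together with the equality on polytopes they give
\[
 \nu(P_k)=\mu(P_k)\leq\mu(K)\leq\mu(Q_k)=\nu(Q_k).
\]
Letting $k\to\infty$ and using continuity of $\nu$ at $K$ in the $d_{\mathrm s}$-topology on $\Ksp(\Sph^n)$, both outer terms tend to $\nu(K)$. Therefore $\mu(K)=\nu(K)$.

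There is no serious obstacle, because the approximation construction is already in place. The only points needing care are the following:
\begin{itemize}
\item the inner approximants must be nonempty, which holds since $\widetilde P_k$ is the convex hull of a nonempty $\varepsilon$-net, so the hypothesis of monotonicity only on nonempty sets applies;
\item the outer approximants must remain proper, which holds since they stay inside the open hemisphere $S_e^+$;
\item convergence must hold in the spherical Hausdorff metric rather than the Euclidean one, which is exactly what \eqref{eq:proper-spherical-approximation} provides through the uniform continuity of $g_e^{-1}$ on a compact ball.
\end{itemize}
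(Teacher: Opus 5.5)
Your proposal is correct and follows the paper's proof: you sandwich $K$ between the proper spherical polytopes $\varnothing\neq P_k\subseteq K\subseteq Q_k$ from \eqref{eq:proper-spherical-approximation}, apply monotonicity and the equality $\mu=\nu$ on $\Psp(\Sph^n)$, and pass to the limit using continuity of $\nu$. Your remark that $\varnothing\in\Psp(\Sph^n)$ correctly handles the empty set, since $\mu$ and $\nu$ are only assumed to be functions.
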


\begin{proof}
Equality at $\varnothing$ is part of the hypothesis.
For nonempty $K$, take the proper spherical polytopes
$\varnothing\neq P_k\subseteq K\subseteq Q_k$ satisfying
\eqref{eq:proper-spherical-approximation}. Monotonicity gives
\[
 \nu(P_k)=\mu(P_k)\leq\mu(K)\leq\mu(Q_k)=\nu(Q_k).
 \]
Continuity of $\nu$ implies
 \[
 \nu(P_k)\longrightarrow\nu(K),\qquad \nu(Q_k)\longrightarrow\nu(K).
 \]
Hence $\mu(K)=\nu(K)$.
\end{proof}

The representation is now established on proper closed spherical convex sets.
It remains to determine the coefficient conditions for monotonicity.

\begin{lemma}\label{lem:coefficient-signs}
For $c_0,\ldots,c_n\in\R$, the valuation
$\nu=\sum_{j=0}^n c_jU_j$ is monotone on nonempty members of
$\Ksp(\Sph^n)$ if and only if $c_1,\ldots,c_n\geq0$.  It is monotone on all
of $\Ksp(\Sph^n)$ if and only if $c_0,\ldots,c_n\geq0$.
\end{lemma}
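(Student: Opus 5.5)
Sufficiency is immediate from the facts recorded in the preliminaries. Each $U_j$ is monotone on nonempty proper sets by the Crofton formula \eqref{eq:Ucrofton}, and it is nonnegative with $U_j(\varnothing)=0$. Moreover $U_0=\tfrac12\chi_{\mathrm s}$ is constant on nonempty sets. Hence if $c_1,\ldots,c_n\geq0$, then $\nu-c_0U_0$ is a nonnegative combination of monotone functions, and adding the constant $c_0/2$ on nonempty sets does not affect comparisons between nonempty sets. If in addition $c_0\geq0$, then $\nu(L)=\sum_j c_jU_j(L)\geq0=\nu(\varnothing)$ for every $L$, so $\nu$ is monotone on all of $\Ksp(\Sph^n)$.

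For necessity, I will show $c_j\geq0$ for $1\leq j\leq n$ by induction along a chain of nested sets. Fix a flag of subspaces and take proper spherical sets $K\subseteq L$ with $\dim K=j-1$ and $\dim L=j$, chosen so that the difference $\nu(L)-\nu(K)$ isolates $c_j$ up to higher-order error. The natural choice is to let $K$ be a small $(j-1)$-dimensional cap (a spherical ball in a great $(j-1)$-subsphere) and $L$ a thin slab over it, or more simply to use the cone construction $L=x*K$ with $x$ very close to $K$. By \eqref{eq:dimension-vanishing}, $U_i(K)=0$ for $i\geq j$ and $U_i(L)=0$ for $i>j$. This gives
\[
 \nu(L)-\nu(K)=\sum_{i=1}^{j-1}c_i\bigl(U_i(L)-U_i(K)\bigr)+c_jU_j(L).
\]
The idea is to let $L$ shrink to $K$ in the Hausdorff metric, with $L\neq K$ and $\dim L=j$. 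By continuity of the $U_i$, the differences $U_i(L)-U_i(K)$ tend to $0$ for $i<j$, but so does $U_j(L)$, so I need a comparison of rates. This rate comparison is the main obstacle.

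I expect the cleanest way around it is to use scaling in a small neighbourhood, where spherical geometry is nearly Euclidean. Take $K$ to be a $(j-1)$-dimensional spherical cap of small radius $r$, and $L$ the $j$-dimensional set of points within spherical distance $\varepsilon$ of $K$ inside a great $j$-subsphere, with $\varepsilon\ll r$. Then $U_j(L)=v_j^{\mathrm s}(L)+v_{j+2}^{\mathrm s}(L)+\cdots=v_j^{\mathrm s}(L)$, which is comparable to $\varepsilon r^{j-1}$, the normalized $j$-volume. Each difference $U_i(L)-U_i(K)$ for $i<j$ is a sum of spherical intrinsic volumes $v_{i+2k}^{\mathrm s}$ with $i+2k\leq j$. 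By a Steiner-type estimate these differences are $O(\varepsilon r^{i+2k-1})$, and all such terms with $i+2k<j$ carry lower powers of $r$. That ordering in $r$ is the wrong direction, so I will instead fix $\varepsilon$ relative to $r$ differently.

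A simpler alternative avoids asymptotics altogether by exploiting monotonicity along many nested sets together with additivity. Choose $K\subseteq L$ with $\dim L=j$ such that $U_i(L)=U_i(K)$ for all $i<j$. This is impossible in general, so I would instead use the linear-growth argument. Take $L_m$ to be $m$ disjoint-interior congruent $j$-dimensional pieces glued along $(j-1)$-faces inside a fixed $j$-dimensional proper polytope $P$, with $K=P$. Then $0\leq\nu(P)-\nu(Q)$ for $Q\subseteq P$, and I would pick $Q$ to be $P$ with $m$ tiny thin slabs removed so that lower-order terms cancel by the valuation property. Concretely, I will try first the direct scaling computation in the tangent Euclidean model, where $U_i$ of tiny sets behave like Euclidean intrinsic volumes $V_i$ times constants plus higher-order corrections. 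Then the nested pair ``segment-thickened'' versus the segment, using a $j$-dimensional box $[0,a]^{j-1}\times[0,\varepsilon]$ versus the face $[0,a]^{j-1}$, with $a=\varepsilon^{1/2}$ small, makes $c_jV_j$ dominate after dividing. Finally, for the second statement, $c_0\geq0$ follows from $0\leq\nu(\{p\})=c_0/2$, using \eqref{eq:dimension-vanishing}.
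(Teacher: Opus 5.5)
Your sufficiency argument is correct and agrees with the paper. So is the deduction $c_0\geq0$ from $0=\nu(\varnothing)\leq\nu(\{p\})=c_0/2$, and point versus arc does give $c_1\geq0$. The gap is the necessity of $c_j\geq0$ for $2\leq j\leq n$. You rightly name the rate comparison as the main obstacle, but you never resolve it, and the construction you finally commit to fails. For the box $L=[0,a]^{j-1}\times[0,\varepsilon]$ over its face $K=[0,a]^{j-1}$, the Euclidean intrinsic volumes are elementary symmetric functions of the side lengths, so
\[
 V_i(L)-V_i(K)=\binom{j-1}{i-1}a^{i-1}\varepsilon\qquad(1\leq i\leq j).
\]
For $i<j$ the ratio of this increment to $V_j(L)=a^{j-1}\varepsilon$ is $\binom{j-1}{i-1}a^{i-j}$, which blows up as the box shrinks. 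With $a=\varepsilon^{1/2}$, the $V_1$-increment $\varepsilon$ dwarfs $V_j(L)=\varepsilon^{(j+1)/2}$, which is the opposite of what you claim. For example, on the sphere with $j=2$, $\nu(L)-\nu(K)$ equals $\frac{\varepsilon}{2\pi}\bigl(c_1+\frac12c_2a\bigr)$ to leading order. Its sign is that of $c_1$ whenever $c_1\neq0$, so these pairs say nothing about $c_2$. Any slab-type thickening of a small set has the same defect: the side walls contribute perimeter times thickness to the lower terms, and at small scales this dominates volume times thickness. Your remaining sketches (the $m$-piece construction, forcing $U_i(L)=U_i(K)$) are not carried out.

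The missing idea is to avoid small scales. The paper compares the caps $K_{k-1}(t)\subseteq K_k(t)$, where $K_k(t)=\{x\in\Sph(E_k):\langle x,e_0\rangle\geq t\}$ and $E_k=\operatorname{span}\{e_0,\ldots,e_k\}$. As $t\to0^+$ these converge to the closed hemispheres $K_k(0)$ of $\Sph(E_k)$. Every $(n+1-j)$-dimensional subspace with $j\leq k$ meets $E_k$ in at least a line, and one of that line's unit directions lies in $K_k(0)$. Hence \eqref{eq:Ucrofton} gives $U_j(K_k(0))=\frac12$ for $j\leq k$, while $U_j(K_k(0))=0$ for $j>k$. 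Continuity of the $U_j$ on $\Ks(\Sph^n)$ then yields $0\leq\lim_{t\to0^+}\bigl[\nu(K_k(t))-\nu(K_{k-1}(t))\bigr]=\frac12c_k$. This is exactly the equality of the lower $U_i$ that you dismissed as impossible, attained in a non-proper limit.

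Alternatively, the cone $L=x*K$ that you mention and then drop does work, provided $K$ is a fixed $(j-1)$-dimensional proper polytope and $x$ tends along the normal to a relative interior point of $K$. In gnomonic coordinates $x*K$ is then a flat pyramid over $K$, and $U_j(x*K)$ is of exact order $\operatorname{dist}_{\mathrm s}(x,K)$. The Crofton increments of $U_i$ for $1\leq i<j$ are $o(\operatorname{dist}_{\mathrm s}(x,K))$, because for almost every direction the projected apex eventually lies inside the projection of $K$; this is the projection argument of \autoref{lem:hb-facet-perturbation}. That estimate, however, is precisely the rate comparison your proposal leaves unproved.
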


\begin{proof}
We first prove sufficiency on nonempty proper sets.
Each $U_j$ is monotone by \eqref{eq:Ucrofton}, and $U_0=1/2$
on nonempty sets. Thus, if
$c_1,\ldots,c_n\geq0$ and $\varnothing\neq K\subseteq L$, then
\[
 \nu(L)-\nu(K)=\sum_{j=1}^n c_j\bigl(U_j(L)-U_j(K)\bigr)\geq0.
\]

Conversely, suppose that $\nu$ is monotone on nonempty sets.
We isolate each $c_k$, $k\geq1$, by comparing nested caps that
approach hemispheres.
For an orthonormal basis $e_0,\ldots,e_n$ of $\R^{n+1}$, set
\[
 E_k=\operatorname{span}\{e_0,\ldots,e_k\},\qquad
 K_k(t)=\{x\in\Sph(E_k):\langle x,e_0\rangle\geq t\}.
\]
Here $0\leq k\leq n$ and $0\leq t<1$. For $t>0$, these are nonempty
proper spherical convex sets, since
\[
 g_{e_0}(K_k(t))
 =\{y\in E_k\cap e_0^\perp:\|y\|\leq\sqrt{t^{-2}-1}\}.
\]
Moreover, $K_0(t)=\{e_0\}$ and $K_{k-1}(t)\subseteq K_k(t)$ for
$1\leq k\leq n$.
For $k\geq1$, every $x\in K_k(0)$ has the form
$x=(\cos\theta)e_0+(\sin\theta)v$, with $0\leq\theta\leq\pi/2$ and
$v\in\Sph(E_k\cap e_0^\perp)$.
With $\theta_t=\min\{\theta,\arccos t\}$, the point
$y=(\cos\theta_t)e_0+(\sin\theta_t)v$ belongs to $K_k(t)$ and satisfies
$\operatorname{dist}_{\mathrm s}(x,y)=\theta-\theta_t\leq\arcsin t$.
Since $K_k(t)\subseteq K_k(0)$, and $K_0(t)=K_0(0)=\{e_0\}$,
\[
 d_{\mathrm s}(K_k(t),K_k(0))\leq\arcsin t\longrightarrow0.
\]

For $j\leq k$ and $L\in\Gr(\R^{n+1},n+1-j)$,
\[
 \dim(L\cap E_k)\geq k+1-j\geq1.
\]
One of the two unit directions of a line in $L\cap E_k$ lies in
$K_k(0)$. The cone $K_k(0)^\vee$ contains $e_0$ but not $-e_0$,
so it is not a linear subspace. By \eqref{eq:Ucrofton},
\[
 U_j(K_k(0))
 =\frac12\int_{\Gr(\R^{n+1},n+1-j)}1\,\dd\nu_{n+1-j}
 =\frac12\qquad(j\leq k).
\]
For $j>k$, we deduce $U_j(K_k(0))=0$ via \eqref{eq:dimension-vanishing}. Monotonicity on $K_{k-1}(t)\subseteq K_k(t)$, $t>0$, and continuity of the $U_j$ on $\Ks(\Sph^n)$ therefore yield
\[
 0\leq\lim_{t\to0^+}
 \bigl[\nu(K_k(t))-\nu(K_{k-1}(t))\bigr]
 =\frac12c_k\qquad(1\leq k\leq n).
\]
Under nonempty-set monotonicity, $\nu(K)\geq\nu(\{x\})=c_0/2$
for $x\in K\neq\varnothing$.  This lower bound is attained at points.
Since $\nu(\varnothing)=0$, comparisons with the empty set hold exactly
when $c_0\geq0$.
\end{proof}

\begin{proof}[Proof of \autoref{thm:main}\textup{(i)}]
Suppose that $\mu$ is monotone on nonempty sets.  By
\autoref{prop:polyhedral-classification}, there are unique
$c_0,\ldots,c_n\in\R$ such that
$\mu=\nu\coloneqq\sum_{j=0}^n c_jU_j$ on $\Psp(\Sph^n)$.
Each $U_j$ is continuous and $\Orth(n+1)$-invariant by \eqref{eq:Udef},
so $\nu$ has the same properties.  By \autoref{lem:monotone-approximation},
$\mu=\nu$ on $\Ksp(\Sph^n)$.
The coefficient conditions and the converse follow from
\autoref{lem:coefficient-signs}: such a sum is monotone
on nonempty sets exactly when $c_1,\ldots,c_n\geq0$, and on
$\Ksp(\Sph^n)$ exactly when $c_0,\ldots,c_n\geq0$.
\end{proof}

\subsection{Closed spherical convex sets}

The extension in \autoref{lem:fan} determines the values on nonproper
sets.  We now examine the additional monotonicity constraints imposed
by great subspheres.

\begin{lemma}\label{lem:all-closed-signs}
For $a_0,\ldots,a_n\in\R$, set
$\nu\coloneqq\sum_{j=0}^n a_jv_j^{\mathrm s}$ on $\Ks(\Sph^n)$.
The following are equivalent.
\begin{enumerate}
\renewcommand{\labelenumi}{\textup{(\roman{enumi})}}
\item $\nu$ is monotone on nonempty sets.
\item $\nu$ is monotone on $\Ks(\Sph^n)$.
\item $0\leq a_0\leq a_1\leq\cdots\leq a_n$.
\end{enumerate}
\end{lemma}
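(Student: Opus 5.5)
The proof goes around the cycle (ii)$\Rightarrow$(i)$\Rightarrow$(iii)$\Rightarrow$(ii). The implication (ii)$\Rightarrow$(i) is trivial.

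\emph{From (i) to (iii).} Restrict $\nu$ to $\Ksp(\Sph^n)$ and pass to the $U_j$-coordinates. By \eqref{eq:U-v},
\[
 \nu=\sum_j a_j(U_j-U_{j+2})=\sum_j c_jU_j,
 \qquad c_j=a_j-a_{j-2},
\]
with $a_{-1}=a_{-2}=0$. Then \autoref{lem:coefficient-signs} gives $c_1\ge0$, that is $a_1\ge0$, and $a_j\ge a_{j-2}$ for $j\ge2$. This only yields monotonicity in steps of two, so the proper sets alone are not enough; the adjacent inequalities $a_{j-1}\le a_j$ must come from nonproper sets.

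For $0\le j\le n$, take $E=\operatorname{span}\{e_0,\dots,e_j\}$. Its great subsphere $\Sph(E)$ has $\nu(\Sph(E))=a_j$. Compare it with the closed half-subsphere $K=\{x\in\Sph(\operatorname{span}\{e_0,\dots,e_{j+1}\}):\langle x,e_{j+1}\rangle\ge0\}$, which contains $\Sph(E)$. Its cone is the product of the $(j+1)$-dimensional subspace $E$ and a ray. The product formula then gives $v_{j+2}(K^\vee)=v_{j+1}(K^\vee)=\tfrac12$, so
\[
 \nu(K)=\tfrac12(a_j+a_{j+1}).
\]
Monotonicity on the nonempty pair $\Sph(E)\subseteq K$ yields $a_j\le a_{j+1}$ for $0\le j\le n-1$. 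For $a_0\ge0$, use the computation in the introduction: for $p\in\Sph^n$, the valuation identity on $\{p\}\cup\{-p\}$ together with nonempty monotonicity $\nu(\{-p\})\le\nu(\{p,-p\})$ gives $\nu(\{p\})=\nu(\{p,-p\})-\nu(\{-p\})\ge0$. Since $\nu(\{p\})=a_0\,v_1(\R_{\ge0}p)=a_0/2$, this gives $a_0\ge0$.

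\emph{From (iii) to (ii).} Write $\nu$ as a positive combination of monotone pieces by Abel summation:
\[
 \nu=\sum_{k=0}^n(a_k-a_{k-1})\sum_{j\ge k}v_j^{\mathrm s},
 \qquad a_{-1}=0 .
\]
It then suffices to show that each tail $T_k(C)=\sum_{m\ge k+1}v_m(C)$ is monotone in the cone $C$ and nonnegative. Nonnegativity is clear, since every $v_m\ge0$. For monotonicity, write $T_k=1-\sum_{m\le k}v_m$ by \eqref{eq:conic-total-mass}. The tail of conic intrinsic volumes is monotone under inclusion of cones; this is the conic kinematic/Crofton fact that $\sum_{m\ge k+1}v_m(C)$ equals half the probability that a uniformly random subspace of codimension $k$ meets $C$ nontrivially, plus half the next tail. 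Equivalently, $T_k=U_k+U_{k+1}$, and the Crofton formula \eqref{eq:Ucrofton} is monotone in $C$ for cones that are not subspaces.

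\emph{Main obstacle.} The delicate point is monotonicity of $U_j$ when the larger set $L$ has a subspace cone, where \eqref{eq:Ucrofton} fails. Here I would argue by continuity. Approximate $L\supseteq K$ by the sets $L_\varepsilon=L\cap\{\langle x,u\rangle\ge-\varepsilon\}$. These are non-subspace cones for a suitable generic $u$, they contain $K$ after a small rotation, and they converge to $L$. Alternatively, use the explicit value of $U_j$ on great subspheres, $U_j(\Sph(E))=\sum_k v_{j+1+2k}$, which is $1$ or $0$ according to parity, and use $T_k$ directly rather than the individual $U_j$, since $T_k(\Sph(E))=1$ when $\dim E>k$ and $0$ otherwise. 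Comparing this with Crofton values $\le1$ for proper $K\subseteq\Sph(E)$ gives monotonicity of each tail.
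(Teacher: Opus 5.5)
\textbf{The argument is incomplete: your proof that each tail $T_k$ is monotone on all closed convex cones misses one case.}

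The rest of your proof is sound. Directions (ii)$\Rightarrow$(i) and (i)$\Rightarrow$(iii) are correct. The half-subsphere value $\nu(K)=\tfrac12(a_j+a_{j+1})$ and the two-point argument for $a_0\geq0$ both work. The paper gets the whole chain more directly from $\{p\}\subseteq\Sph(L_1)\subsetneq\cdots\subsetneq\Sph(L_{n+1})$, and your detour through \autoref{lem:coefficient-signs} is unnecessary. Your Abel-summation reduction of (iii)$\Rightarrow$(ii) to monotonicity of the tails $T_k=\sum_{m\geq k+1}v_m=U_k+U_{k+1}$ is valid. It is a cleaner organization than the paper's case analysis with $c_j=a_j-a_{j-2}$.

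\textbf{The missing case.} You cover $C\subseteq D$ with neither a subspace, via \eqref{eq:Ucrofton}. You also cover $D$ a subspace, where $T_k(D)\in\{0,1\}$. Total mass handles the value $1$; for the value $0$ you need \eqref{eq:dimension-vanishing}, which you do not state. You never treat $C$ a subspace and $D$ not, and there \eqref{eq:Ucrofton} is unavailable for $C$. What must be shown is that $T_k(D)=1$ whenever $D$ contains a subspace $C$ of dimension $c\geq k+1$. The paper supplies the idea:
\begin{itemize}
\item Take $x\in D\setminus C$ and $u=x-\pi_Cx$. This lies in $(D\cap C^\perp)\setminus\{0\}$ because $-\pi_Cx\in C\subseteq D$.
\item Insert the hemisphere cone $C+\R_{\geq0}u$ between $C$ and $D$.
\item Your own product-formula computation gives $v_c=v_{c+1}=\tfrac12$ on this cone, so $T_k=1$ there for $k\leq c-1$.
\item Crofton monotonicity between these two non-subspace cones then gives $T_k(D)\geq1=T_k(C)$.
\end{itemize}
Splitting off the lineality space of $D$ would also work.

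\textbf{Discard the first route in your ``main obstacle'' paragraph.} It aims at monotonicity of an individual $U_j$ when the larger cone is a subspace, which is false. For $n=1$ and a closed semicircle $S$, $U_0(S)=v_1(S^\vee)=\tfrac12$, whereas $U_0(\Sph^1)=v_1(\R^2)=0$. The approximation also cannot be repaired. The sets $L\cap\{\langle x,u\rangle\geq-\varepsilon\}$ are in general not spherically convex. Moreover, no sequence of non-subspace closed convex cones converges to a subspace: \eqref{eq:Ucrofton} with $j=0$ gives $U_0=\tfrac12$ on such cones, while $U_0\in\{0,1\}$ on subspaces and $U_0$ is continuous.

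Finally, your verbal description ``half the hitting probability plus half the next tail'' is not $T_k$. The correct identity is the one you then write, $T_k=U_k+U_{k+1}$. For non-subspace cones this is half the hitting probability in codimension $k$ plus half that in codimension $k+1$.
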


\begin{proof}
Condition~\textup{(ii)} implies~\textup{(i)} by restriction to nonempty sets.
For $\textup{(i)}\Rightarrow\textup{(iii)}$, choose nested linear subspaces
\[
 L_1\subsetneq L_2\subsetneq\cdots\subsetneq L_{n+1}=\R^{n+1},
 \qquad \dim L_k=k.
\]
For $p\in\Sph(L_1)$, the spherical intrinsic volumes of points and great subspheres give
\[
 \nu(\{p\})=\frac12a_0,\qquad
 \nu(\Sph(L_k))=a_{k-1}\quad(1\leq k\leq n+1).
\]
Monotonicity along
$\{p\}\subseteq\Sph(L_1)\subsetneq\cdots\subsetneq\Sph(L_{n+1})$ implies
\[
 \frac12a_0\leq a_0\leq a_1\leq\cdots\leq a_n,
\]
which is~\textup{(iii)}.

To prove $\textup{(iii)}\Rightarrow\textup{(ii)}$, assume~\textup{(iii)}.
Since $a_j,v_j^{\mathrm s}\geq0$,
$\nu(A)\geq0=\nu(\varnothing)$ for every nonempty $A$.
It remains to compare $\varnothing\neq A\subseteq B$.
Write $C=A^\vee$, $D=B^\vee$, and
$c_j=a_j-a_{j-2}\geq0$, where $a_{-2}=a_{-1}=0$.
If neither $C$ nor $D$ is a linear subspace, \eqref{eq:U-v} and
\eqref{eq:Ucrofton} imply
\[
 \nu(B)-\nu(A)
 =\sum_{j=0}^n c_j\bigl(U_j(B)-U_j(A)\bigr)\geq0,
\]
because each hitting indicator for $C$ is bounded by that for $D$.

If $D$ is a $k$-dimensional linear subspace, computing the intrinsic
volumes of $C\subseteq D$ within $D$ by ambient-space invariance gives
$v_r(C)=0$ for $r>k$. Nonnegativity and~\eqref{eq:conic-total-mass} yield
\[
 \nu(A)=\sum_{r=1}^k a_{r-1}v_r(C)
 \leq a_{k-1}\sum_{r=1}^k v_r(C)
 \leq a_{k-1}=\nu(B).
\]

Finally, suppose that $C$ is a $k$-dimensional linear subspace
and $D$ is not. Then $1\leq k\leq n$.
For $x\in D\setminus C$, put $u=x-\pi_Cx$.
Since $-\pi_Cx\in C\subseteq D$, we have
$u\in(D\cap C^\perp)\setminus\{0\}$, and hence
\[
 A\subseteq H\coloneqq(C+\R_{\geq0}u)\cap\Sph^n\subseteq B.
\]
The set $H$ is a closed hemisphere of $\Sph(C\oplus\R u)$.
The computation in the proof of \autoref{lem:coefficient-signs} gives
\[
 U_j(H)=
 \begin{cases}
 1/2,&0\leq j\leq k,\\
 0,&k<j\leq n.
 \end{cases}
\]
Neither $H^\vee$ nor $D$ is a linear subspace, so the first case
applies to $H\subseteq B$. Therefore,
\[
 \nu(A)=a_{k-1}
 \leq\frac{a_{k-1}+a_k}{2}
 =\frac12\sum_{j=0}^k c_j
 =\nu(H)\leq\nu(B).\qedhere
\]
\end{proof}

\begin{proof}[Proof of \autoref{thm:main}\textup{(ii)}]
Assume that $\mu$ is monotone on $\Ks(\Sph^n)$.
By \autoref{thm:main}\textup{(i)}, there are unique $c_0,\ldots,c_n\in\R$ such that the
restriction of $\mu$ to $\Ksp(\Sph^n)$ equals
$\sum_{j=0}^n c_jU_j$.  Accordingly, $\mu$ and $\sum_{j=0}^n c_jU_j$ agree on $\Ks(\Sph^n)$ by
uniqueness in \autoref{lem:fan}, since both extend this restriction.  Relation~\eqref{eq:U-v} expresses
this valuation uniquely as $\sum_{j=0}^n a_jv_j^{\mathrm s}$.  Explicitly,
\[
 a_j=\sum_{i=0}^{\lfloor j/2\rfloor}c_{j-2i},
 \qquad c_j=a_j-a_{j-2},
 \qquad 0\leq j\leq n,
\]
where $a_{-2}=a_{-1}=0$.  The inequalities $0\leq a_0\leq\cdots\leq a_n$
follow from \autoref{lem:all-closed-signs}.

Conversely, \autoref{lem:all-closed-signs} shows that every valuation with
these coefficient inequalities is monotone on $\Ks(\Sph^n)$.  Continuity and $\Orth(n+1)$-invariance follow from the
corresponding properties of the conic intrinsic volumes.
\end{proof}
\section{Conic and elliptic classifications}\label{sec:conic-elliptic}
The conic and elliptic classifications follow from the spherical results via the link map and the antipodal quotient, respectively.

\subsection{Valuations on convex cones}\label{subsec:conic-formulation}

For $n\geq1$, write $\mathcal C^{n+1}=\mathcal C(\R^{n+1})$ for all
closed convex cones and $\mathcal C_p^{n+1}$ for the pointed members.
Both families include $\{0\}$ and carry the $d_{\mathrm c}$-topology.

For $\mathcal A\in\{\mathcal C^{n+1},\mathcal C_p^{n+1}\}$, a \emph{normalized conic valuation}
is a map $\varphi:\mathcal A\to\R$ satisfying $\varphi(\{0\})=0$ and
\[
 \varphi(C)+\varphi(D)=\varphi(C\cup D)+\varphi(C\cap D),
\]
whenever $C,D,C\cup D\in\mathcal A$.
We say that $\varphi$ is \emph{monotone on nonzero cones} if
$\varphi(C)\leq\varphi(D)$ whenever
$\{0\}\neq C\subseteq D$ with $C,D\in\mathcal A$, and
simply \emph{monotone} if $\varphi(C)\leq\varphi(D)$ is required for all
$C,D\in\mathcal A$ with $C\subseteq D$.

By \autoref{lem:link-homeomorphism}, the link map is a homeomorphism
with inverse $K\mapsto K^\vee$; nonzero pointed cones correspond to proper
spherical convex sets, with $\{0\}$ corresponding to $\varnothing$.
Since $\operatorname{link}C=C\cap\Sph^n$, for $C,D,C\cup D\in\mathcal A$,
\[
 \operatorname{link}(C\cup D)=\operatorname{link}C\cup\operatorname{link}D,\qquad \operatorname{link}(C\cap D)=\operatorname{link}C\cap\operatorname{link}D.
\]
Thus the mutually inverse assignments
\[
 \mu(K)=\varphi(K^\vee),\qquad
 \varphi(C)=\mu(\operatorname{link}C)
\]
identify normalized conic valuations with spherical valuations.
They preserve continuity and the corresponding monotonicity properties.
Invariance is preserved as well, since
$\operatorname{link}(gC)=g\operatorname{link}C$ for $g\in\Orth(n+1)$.

\begin{corollary}\label{cor:conic}
Let $n\geq1$.
\begin{enumerate}
\renewcommand{\labelenumi}{\textup{(\roman{enumi})}}
\item An $\SO(n+1)$-invariant normalized conic valuation
$\varphi:\mathcal C_p^{n+1}\to\R$ is monotone
on nonzero cones if and only if it has a unique representation
\[
 \varphi=\sum_{j=0}^{n}c_jU_j,
 \qquad c_0\in\R,\quad c_1,\ldots,c_n\geq0.
\]
A valuation with this representation is monotone on $\mathcal C_p^{n+1}$
if and only if $c_0\geq0$.
\item An $\SO(n+1)$-invariant normalized conic valuation
$\varphi:\mathcal C^{n+1}\to\R$ is monotone if and only if it has a
unique representation
\[
 \varphi=\sum_{j=1}^{n+1}a_jv_j,
 \qquad 0\leq a_1\leq\cdots\leq a_{n+1}.
\]
Monotonicity of $\varphi$ on nonzero cones is equivalent to
monotonicity on $\mathcal C^{n+1}$.
\end{enumerate}
Every such valuation is continuous and $\Orth(n+1)$-invariant.
\end{corollary}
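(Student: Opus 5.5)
The plan is to transport \autoref{thm:main} through the link correspondence described just before the statement. Given an $\SO(n+1)$-invariant normalized conic valuation $\varphi$ on $\mathcal C_p^{n+1}$ or $\mathcal C^{n+1}$, define $\mu(K)\coloneqq\varphi(K^\vee)$ on $\Ksp(\Sph^n)$ or $\Ks(\Sph^n)$, respectively. Since $\varnothing^\vee=\{0\}$, we have $\mu(\varnothing)=\varphi(\{0\})=0$. The set identities $\operatorname{link}(C\cup D)=\operatorname{link}C\cup\operatorname{link}D$ and $\operatorname{link}(C\cap D)=\operatorname{link}C\cap\operatorname{link}D$, together with the bijectivity of the link map, show that $\mu$ is a valuation. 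Moreover, $(K\cup L)^\vee\in\mathcal A$ exactly when $K\cup L$ lies in the corresponding spherical family; pointed cones correspond to proper sets. The link map is inclusion-preserving in both directions and $\Orth(n+1)$-equivariant. Hence $\mu$ is $\SO(n+1)$-invariant, and it is monotone on nonempty sets (respectively, monotone) exactly when $\varphi$ is monotone on nonzero cones (respectively, monotone). Continuity transfers through \autoref{lem:link-homeomorphism}.

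For (i), apply \autoref{thm:main}(i) to $\mu$. This gives $\mu=\sum c_jU_j$ with $c_1,\ldots,c_n\geq0$, uniquely. Since $U_j(K)=U_j(K^\vee)$ by \eqref{eq:Udef}, and $U_j(\{0\})=0$ matches $U_j(\varnothing)=0$, we obtain $\varphi(C)=\mu(\operatorname{link}C)=\sum c_jU_j(C)$. Conversely, such a combination on cones pulls back to the spherical combination, so the converse and the criterion $c_0\geq0$ for full monotonicity both transfer. Uniqueness follows because the correspondence $\varphi\leftrightarrow\mu$ is bijective and linear.

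For (ii), apply \autoref{thm:main}(ii): $\mu=\sum_{j=0}^n a'_jv_j^{\mathrm s}$ with $0\leq a'_0\leq\cdots\leq a'_n$. By \eqref{eq:spherical-intrinsic-def}, $v_j^{\mathrm s}(K)=v_{j+1}(K^\vee)$, so reindexing $a_{j+1}\coloneqq a'_j$ gives $\varphi=\sum_{j=1}^{n+1}a_jv_j$ on nonzero cones. At $\{0\}$ both sides vanish, since $v_j(\{0\})=0$ for $j\geq1$. The equivalence of monotonicity on nonzero cones with full monotonicity is \autoref{lem:all-closed-signs}, (i)$\Leftrightarrow$(ii), transported through the correspondence. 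Note that, unlike (i), the hypothesis here only needs monotonicity on nonzero cones, which corresponds to monotonicity of $\mu$ on nonempty sets. \autoref{thm:main}(ii) assumes full monotonicity, so we first invoke the remark after \autoref{thm:main}: for $p\in K$, $0\leq\mu(\{p,-p\})-\mu(\{-p\})=\mu(\{p\})\leq\mu(K)$, so monotonicity on nonempty sets upgrades to full monotonicity.

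Finally, continuity and $\Orth(n+1)$-invariance follow from those of $U_j$ and $v_j$ on cones, which is exactly the continuity stated in \autoref{thm:main}. There is no real obstacle. The only point needing care is the bookkeeping that $\varphi(\{0\})=0$ matches the spherical empty-set convention and that the index shift $v_j^{\mathrm s}\leftrightarrow v_{j+1}$ turns the coefficient chain $a'_0\leq\cdots\leq a'_n$ into $a_1\leq\cdots\leq a_{n+1}$.
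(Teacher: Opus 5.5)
Your proposal is correct and follows the paper's proof. Both transfer \autoref{thm:main} through the link correspondence using $U_j(\operatorname{link}C)=U_j(C)$ and $v_{j-1}^{\mathrm s}(\operatorname{link}C)=v_j(C)$. Your upgrade $0\leq\mu(\{p,-p\})-\mu(\{-p\})=\mu(\{p\})\leq\mu(K)$ is the spherical image of the paper's ray/line argument $0\leq\varphi(R\cup(-R))-\varphi(-R)=\varphi(R)\leq\varphi(C)$; citing \autoref{lem:all-closed-signs} alone for that equivalence would be circular, since the lemma presupposes the representation, but your direct upgrade already settles it.
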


\begin{proof}
We transfer the two classifications through the link correspondence.
For a nonzero cone $C$, $(\operatorname{link}C)^\vee=C$.
Using \eqref{eq:Udef} and~\eqref{eq:spherical-intrinsic-def}, we obtain
\[
 U_j(\operatorname{link}C)=U_j(C)\;(0\leq j\leq n),\qquad v_{j-1}^{\mathrm s}(\operatorname{link}C)=v_j(C)\; (1\leq j\leq n+1).
\]
The case $C=\{0\}$ follows from $\operatorname{link}\{0\}=\varnothing$,
$v_j(\{0\})=0$ for $j\geq1$, and \eqref{eq:Udef}.
Hence \autoref{thm:main}\textup{(i)} and \textup{(ii)} transfer to the respective conic families, with the same coefficient conditions, uniqueness, continuity and orthogonal invariance. It remains to compare the two monotonicity conditions on $\mathcal C^{n+1}$.  Suppose that $\varphi$ is monotone on nonzero cones, and choose a ray $R\subseteq C\neq\{0\}$. Since $R\cup(-R)$ is a line and $R\cap(-R)=\{0\}$, the valuation identity and monotonicity imply
\[
 \varphi(\{0\})=0\leq\varphi(R\cup(-R))-\varphi(-R)=\varphi(R)\leq\varphi(C).\qedhere
\]
\end{proof}

See also \cite[Corollary~1.2 and Theorem~1.1]{LotzMonotone} for the conic
classifications.
For $n=0$, the group $\SO(1)$ is trivial and does not force equal values
on the two rays.  Thus the assumption $n\geq1$ cannot be omitted.

\subsection{Valuations on real elliptic space}\label{subsec:elliptic-corollaries}
The antipodal quotient is
\[
 q:\Sph^n\longrightarrow\mathbb{RP}^n,\qquad q(x)=[x],
\]
where $[x]$ denotes the antipodal class $\{x,-x\}$. The elliptic distance is
\[
 \operatorname{dist}_{\mathrm e}([x],[y]) =\min\{\operatorname{dist}_{\mathrm s}(x,y), \operatorname{dist}_{\mathrm s}(x,-y)\},
\]
and the isometry group is \(\Orth(n+1)/\{\pm I\}\).
We call a set \emph{elliptically convex} if it equals \(q(K)\) for
some \(K\in\Ksp(\Sph^n)\).  Thus, \(\K(\mathbb{RP}^n)\) consists of
the compact projectively convex sets contained in an affine chart,
together with \(\varnothing\).
On its nonempty members we use the
Hausdorff metric \(d_{\mathrm e}\) induced by
\(\operatorname{dist}_{\mathrm e}\). Set \(d_{\mathrm e}(\varnothing,\varnothing)=0\)
and \(d_{\mathrm e}(\varnothing,A)=d_{\mathrm e}(A,\varnothing)=\pi\)
for \(A\neq\varnothing\), so that the empty set is isolated.
A function $\nu:\Ksp(\Sph^n)\to\R$ is \emph{even} if
$\nu(-K)=\nu(K)$ for every $K\in\Ksp(\Sph^n)$.
A valuation with this symmetry is called an \emph{even valuation}.

For $A=q(K)\neq\varnothing$, the sets $K$ and $-K$ are disjoint by
properness and connected by gnomonic convexity.  Being compact, they are
the two connected components of
\[
 q^{-1}(A)=K\cup(-K).
\]
Every proper lift of $A$ is connected and maps onto $A$, so it equals
$K$ or $-K$.  More generally, if $A\subseteq q(M)$ with $M$ proper,
one of these lifts lies in $M$, and it is $M\cap q^{-1}(A)$.
Thus, for $A,B,A\cup B\in\K(\mathbb{RP}^n)$ and a lift $M$ of $A\cup B$,
the compatible lifts
\[
 K=M\cap q^{-1}(A),\qquad L=M\cap q^{-1}(B)
\]
satisfy, by injectivity of $q$ on $M$,
\[
 K\cup L=M,\qquad q(K\cap L)=A\cap B.
\]
These identities also hold for empty sets.  In particular,
$A\cap B\in\K(\mathbb{RP}^n)$.

\begin{lemma}\label{lem:elliptic-lift}
The map
\[
 \mu\longmapsto q^*\mu,\qquad
 (q^*\mu)(K)=\mu(q(K))\quad(K\in\Ksp(\Sph^n)),
\]
is a bijection from valuations on $\K(\mathbb{RP}^n)$ to even valuations
on $\Ksp(\Sph^n)$.  For $\nu=q^*\mu$,
\[
 \begin{aligned}
 \mu\text{ is continuous}
 &\Longleftrightarrow\nu\text{ is continuous},\\
 \mu\text{ is monotone on nonempty sets}
 &\Longleftrightarrow\nu\text{ is monotone on nonempty sets},\\
 \mu\text{ is monotone}
 &\Longleftrightarrow\nu\text{ is monotone},\\
 \mu\text{ is isometry-invariant}
 &\Longleftrightarrow\nu\text{ is }\Orth(n+1)\text{-invariant}.
 \end{aligned}
\]
\end{lemma}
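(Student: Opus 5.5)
First I check that $q^*\mu$ is a well-defined even valuation. Evenness holds because $q(-K)=q(K)$. We also have $q^*\mu(\varnothing)=\mu(\varnothing)=0$. For the valuation identity, take $K,L,K\cup L\in\Ksp(\Sph^n)$. Then $q(K\cup L)=q(K)\cup q(L)$. Since $q$ is injective on the proper set $M=K\cup L$, also $q(K\cap L)=q(K)\cap q(L)$. Hence the identity for $\mu$ on $q(K),q(L)$ transfers directly.

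For bijectivity, injectivity is immediate: $q$ maps $\Ksp(\Sph^n)$ onto $\K(\mathbb{RP}^n)$ by definition of elliptic convexity. For surjectivity, given an even valuation $\nu$, define $\mu(A)\coloneqq\nu(K)$ for any proper lift $K$ of $A$. This is well defined because the only proper lifts are $K$ and $-K$, as shown before the lemma, and $\nu$ is even. To check the valuation identity for $A,B,A\cup B\in\K(\mathbb{RP}^n)$, use the compatible lifts $K=M\cap q^{-1}(A)$ and $L=M\cap q^{-1}(B)$ inside a lift $M$ of $A\cup B$. The identities $K\cup L=M$ and $q(K\cap L)=A\cap B$ give $\mu(A)+\mu(B)=\nu(K)+\nu(L)=\nu(M)+\nu(K\cap L)=\mu(A\cup B)+\mu(A\cap B)$. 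Here $K\cap L$ is proper or empty, and it is a lift of $A\cap B$.

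The two monotonicity statements reduce to lifts. If $A\subseteq B$ with $A\neq\varnothing$, pick a proper lift $M$ of $B$; then $M\cap q^{-1}(A)$ is a lift of $A$ contained in $M$. Conversely, $K\subseteq L$ implies $q(K)\subseteq q(L)$. The empty-set comparisons match trivially. For invariance, every elliptic isometry is $[g]$ with $g\in\Orth(n+1)$, and $q(gK)=[g]q(K)$. So $\mu\circ[g]=\mu$ is equivalent to $\nu\circ g=\nu$.

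The main obstacle is continuity, which amounts to comparing $d_{\mathrm e}$ with $d_{\mathrm s}$. One direction is easy. Since $\operatorname{dist}_{\mathrm e}([x],[y])\leq\operatorname{dist}_{\mathrm s}(x,y)$, we get $d_{\mathrm e}(q(K),q(L))\leq d_{\mathrm s}(K,L)$. Hence $q$ is $1$-Lipschitz, and continuity of $\mu$ implies continuity of $\nu=\mu\circ q$.

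For the converse, I need the following claim: if $A_i\to A\neq\varnothing$ in $d_{\mathrm e}$ and $K$ is a proper lift of $A$, then for large $i$ there are proper lifts $K_i$ of $A_i$ with $K_i\to K$ in $d_{\mathrm s}$. Since $K$ is compact and proper, $\delta\coloneqq\operatorname{dist}_{\mathrm s}(K,-K)>0$. For $d_{\mathrm e}(A_i,A)<\delta/4$, every point of $q^{-1}(A_i)$ lies within $\delta/4$ of $K\cup(-K)$. These two neighborhoods are disjoint, because points of $K$ and $-K$ are more than $\delta$ apart.

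The lift $K_i$ is connected, so it lies entirely in one neighborhood; replacing $K_i$ by $-K_i$ if necessary, it lies within $\delta/4$ of $K$. Now let $x\in K$. Some point of $A_i$ is elliptically close to $[x]$, so some point $y\in K_i\cup(-K_i)$ is spherically close to $x$. The point $y$ must lie in $K_i$, since $-K_i$ is far from $K$. Together these give $d_{\mathrm s}(K_i,K)\leq d_{\mathrm e}(A_i,A)$ once $d_{\mathrm e}(A_i,A)<\delta/4$.

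Continuity of $\nu$ then gives $\mu(A_i)=\nu(K_i)\to\nu(K)=\mu(A)$. Continuity at the isolated empty set is trivial.
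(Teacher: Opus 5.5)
Your proof is correct. Everything except the converse continuity step is the paper's argument: the bijection via compatible lifts inside a proper lift of $A\cup B$, the two monotonicity equivalences, the invariance equivalence, and the easy direction via $d_{\mathrm e}(q(E),q(F))\leq d_{\mathrm s}(E,F)$.

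The two arguments differ in how they show that continuity of $\nu$ implies continuity of $\mu$:
\begin{itemize}
\item The paper takes arbitrary proper lifts $K_r$ of $A_r$. It uses compactness of the nonempty compact subsets of $\Sph^n$ under $d_{\mathrm s}$ to extract subsequences $K_{r_i}\to L$. It then argues that $L$ is connected as a Hausdorff limit of connected compacta and that $q(L)=A$, so $L=\pm K$. It finishes with evenness and a subsequence-of-subsequences argument.
\item You instead use the gap $\delta=\operatorname{dist}_{\mathrm s}(K,-K)>0$ and connectedness of $K_i$ to pick the correct lift directly. This gives the explicit estimate $d_{\mathrm s}(K_i,K)\leq d_{\mathrm e}(A_i,A)$ once $d_{\mathrm e}(A_i,A)<\delta/4$. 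Together with the Lipschitz bound this is an equality, so $q$ preserves Hausdorff distance to $K$ for lifts chosen near $K$.
\end{itemize}
Your route is self-contained and quantitative. The paper's is shorter but rests on the external compactness theorem and on connectedness of Hausdorff limits.

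Two small points. First, points of $K$ and $-K$ are at least (not more than) $\delta$ apart; this still makes the $\delta/4$-neighbourhoods disjoint. Second, the half $\sup_{z\in K_i}\operatorname{dist}_{\mathrm s}(z,K)\leq d_{\mathrm e}(A_i,A)$ of your final estimate needs that disjointness once more. A point of $K_i$ is within $d_{\mathrm e}(A_i,A)$ of $K\cup(-K)$ and within $\delta/4$ of $K$, so the nearby point lies in $K$. Your phrase ``together these give'' leaves this step implicit.
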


\begin{proof}
We first establish the valuation bijection.
Since $q(-K)=q(K)$, the function $q^*\mu$ is even.
An even function $\nu$ determines a unique function $\mu$ by
$\mu(q(K))=\nu(K)$, including $\mu(\varnothing)=\nu(\varnothing)$.
This assignment is inverse to $q^*$.  Suppose that $\nu$ is a valuation.
For $A,B,A\cup B\in\K(\mathbb{RP}^n)$, choose a proper lift $M$ of
$A\cup B$ and set $K=M\cap q^{-1}(A)$, $L=M\cap q^{-1}(B)$.  Then
\[
 \mu(A)+\mu(B)
 =\nu(K)+\nu(L)
 =\nu(M)+\nu(K\cap L)
 =\mu(A\cup B)+\mu(A\cap B).
\]
Conversely, if $K\cup L$ is proper, $q$ is injective there, so
$q(K)\cap q(L)=q(K\cap L)$ and $q^*\mu$ is a valuation.
For monotonicity, take $A\subseteq B=q(L)$.  The lift $K=L\cap q^{-1}(A)$ satisfies
$K\subseteq L$.  Together with preservation of inclusions by $q$,
this proves both monotonicity equivalences.

For continuity, compare the elliptic and spherical Hausdorff distances:
for nonempty compact
$E,F\subseteq\Sph^n$,
\begin{equation}\label{eq:elliptic-hausdorff-comparison}
 d_{\mathrm e}(q(E),q(F))\leq d_{\mathrm s}(E,F).
\end{equation}
Hence $q^*\mu$ is continuous whenever $\mu$ is.
Conversely, assume that $\nu$ is continuous and
$A_r\to A=q(K)\neq\varnothing$ in $d_{\mathrm e}$.
Omit the finitely many empty terms and choose proper lifts $K_r$ of $A_r$.
The nonempty compact subsets of $\Sph^n$ form a compact space under the
spherical Hausdorff metric $d_{\mathrm s}$
\cite[Lemma~I.5.31]{BridsonHaefliger}, so every subsequence has a
further subsequence $K_{r_i}\to L$.
The limit $L$ is connected, as a Hausdorff limit of connected compact sets,
and \eqref{eq:elliptic-hausdorff-comparison} gives $q(L)=A$.  Thus
$L\subseteq K\cup(-K)$ lies in one of the two components and maps onto $A$;
consequently, $L=K$ or $L=-K$.  By continuity and evenness,
\[
 \mu(A_{r_i})=\nu(K_{r_i})\longrightarrow\nu(L)=\nu(K)=\mu(A).
\]
Thus every subsequence of $\mu(A_r)$ has a further subsequence converging to $\mu(A)$, so $\mu(A_r)\to\mu(A)$ and $\mu$ is continuous at $A$. At $\varnothing$, continuity holds on both sides because it is isolated. For invariance, use the identity
\[
 (q^*\mu)(gK)=\mu([g]q(K))\qquad(g\in\Orth(n+1)).
\]
Since every elliptic isometry is induced by such a $g$, the invariance conditions are equivalent.
\end{proof}

Define
\[
 v_j^{\mathrm e}(q(K))\coloneqq v_j^{\mathrm s}(K),\qquad
 U_j^{\mathrm e}(q(K))\coloneqq U_j(K),\qquad 0\leq j\leq n.
\]
The definitions are independent of the lift by evenness of $v_j^{\mathrm s}$
and $U_j$.  By \autoref{lem:elliptic-lift}, these are continuous
isometry-invariant valuations on $\K(\mathbb{RP}^n)$.
Only the classifications on proper spherical convex sets apply here.
The great subspheres used to compare adjacent coefficients in
\autoref{lem:all-closed-signs} are not proper and hence are excluded
from the lifting correspondence.

\begin{corollary}\label{cor:elliptic-continuous}\label{cor:elliptic-monotone}
Let $n\geq1$ and $\mu:\K(\mathbb{RP}^n)\to\R$ be an
isometry-invariant valuation.
\begin{enumerate}
\renewcommand{\labelenumi}{\textup{(\roman{enumi})}}
\item $\mu$ is continuous if and only if it has a unique representation
\[
 \mu=\sum_{j=0}^n a_jv_j^{\mathrm e},\qquad a_0,\ldots,a_n\in\R.
\]
\item $\mu$ is monotone on nonempty sets if and only if it has a
unique representation
\[
 \mu=\sum_{j=0}^n c_jU_j^{\mathrm e},\qquad
 c_0\in\R,\quad c_1,\ldots,c_n\geq0.
\]
Every valuation with this representation is continuous.
It is monotone on $\K(\mathbb{RP}^n)$ if and only if $c_0\geq0$.
\end{enumerate}
\end{corollary}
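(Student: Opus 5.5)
The plan is to transfer everything through \autoref{lem:elliptic-lift}. An isometry-invariant valuation $\mu$ on $\K(\mathbb{RP}^n)$ corresponds to the even, $\Orth(n+1)$-invariant valuation $\nu=q^*\mu$ on $\Ksp(\Sph^n)$. This correspondence preserves continuity, monotonicity on nonempty sets, and monotonicity. We will therefore classify $\nu$ using the spherical results and push the answer back down to $\K(\mathbb{RP}^n)$. For this we need that the basis functions are even: $v_j^{\mathrm s}(-K)=v_j^{\mathrm s}(K)$ and $U_j(-K)=U_j(K)$. Both follow from $\Orth(n+1)$-invariance applied to $-I$. Hence $q^*v_j^{\mathrm e}=v_j^{\mathrm s}$ and $q^*U_j^{\mathrm e}=U_j$ on $\Ksp(\Sph^n)$, and the elliptic functions are well-defined continuous invariant valuations.

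For (i), suppose $\mu$ is continuous. Then $\nu$ is continuous and $\Orth(n+1)$-invariant on $\Ksp(\Sph^n)$. By the continuous classification of Wang and Wu on proper closed spherical convex sets \cite[Corollary~8.3]{WangWu}, $\nu=\sum_j a_jv_j^{\mathrm s}$, and applying $q^*$ in reverse gives $\mu=\sum_j a_jv_j^{\mathrm e}$. The converse holds because each $v_j^{\mathrm e}$ is continuous.

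Uniqueness in (i) needs an argument, since great subspheres are not available in the elliptic setting. If $\sum_j a_jv_j^{\mathrm e}=0$, then $\sum_j a_jv_j^{\mathrm s}=0$ on $\Ksp(\Sph^n)$, hence on $\Psp(\Sph^n)$. The uniqueness part of \autoref{lem:fan} extends this identity to $\Ps(\Sph^n)$. Evaluating on $\Sph(E)$ with $\dim E=j+1$ then gives $a_j=0$, exactly as in the uniqueness step of \autoref{prop:polyhedral-classification}.

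For (ii), suppose $\mu$ is monotone on nonempty sets. Then $\nu$ is $\SO(n+1)$-invariant and monotone on nonempty sets. \autoref{thm:main}(i) gives $\nu=\sum_j c_jU_j$ with $c_1,\ldots,c_n\geq0$, uniquely, and this yields $\mu=\sum_j c_jU_j^{\mathrm e}$. Conversely, any such combination pulls back to a valuation that is monotone on nonempty sets by \autoref{lem:coefficient-signs}, and therefore $\mu$ is as well. Continuity of $\mu$ follows from continuity of $\nu$ via \autoref{lem:elliptic-lift}. Uniqueness of the coefficients transfers because $q^*$ is injective. Finally, $\mu$ is monotone on all of $\K(\mathbb{RP}^n)$ iff $\nu$ is monotone on $\Ksp(\Sph^n)$, which by \autoref{thm:main}(i) holds iff $c_0\geq0$.

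The main obstacle is mild: \autoref{thm:main}(i) and Wang–Wu's result must be applicable to valuations that are only known to be even and invariant. This is fine, since these results need only $\SO(n+1)$-invariance, which is implied by $\Orth(n+1)$-invariance. The only other point to watch is that uniqueness in (i) must go through the fan extension, because the great subspheres used there are not proper.
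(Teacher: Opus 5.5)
Your proposal is correct and follows the paper's proof. Both pull back via \autoref{lem:elliptic-lift}, then invoke Wang and Wu's continuous classification \cite[Corollary~8.3]{WangWu} for (i) and \autoref{thm:main}\textup{(i)} for (ii), transferring uniqueness, continuity, and the condition $c_0\geq0$. The only difference is that you verify uniqueness in (i) explicitly, via the fan extension and great subspheres as in \autoref{prop:polyhedral-classification}, whereas the paper takes it directly from the spherical classification.
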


\begin{proof}
By \autoref{lem:elliptic-lift}, $\nu=q^*\mu$ is an $\Orth(n+1)$-invariant
valuation on $\Ksp(\Sph^n)$ with the same continuity and monotonicity
properties as $\mu$.  The pullback identities
$q^*v_j^{\mathrm e}=v_j^{\mathrm s}$ and $q^*U_j^{\mathrm e}=U_j$
imply, for real coefficients $a_j,c_j$,
\[
 \nu=\sum_{j=0}^n a_jv_j^{\mathrm s}\Longleftrightarrow\mu=\sum_{j=0}^n a_jv_j^{\mathrm e},\quad \nu=\sum_{j=0}^n c_jU_j\Longleftrightarrow\mu=\sum_{j=0}^n c_jU_j^{\mathrm e},
\]
since every elliptic convex set has the form $q(K)$.
For \textup{(i)}, apply Wang and Wu's continuous classification on
$\Ksp(\Sph^n)$ \cite[Corollary~8.3]{WangWu} to the first equivalence.
For \textup{(ii)}, apply \autoref{thm:main}\textup{(i)} to the second.
The respective spherical classifications give uniqueness of the coefficients,
continuity in \textup{(ii)}, and the additional condition $c_0\geq0$
for monotonicity including $\varnothing$.
\end{proof}

The elliptic classification in \autoref{cor:elliptic-monotone}\textup{(ii)} follows from \autoref{thm:main}\textup{(i)}, since elliptic convex sets lift to proper spherical convex sets. The coefficient ordering in \autoref{thm:main}\textup{(ii)} is not necessary in elliptic space, as the following computation and Euler example show. Set $U_{n+1}^{\mathrm e}=U_{n+2}^{\mathrm e}=0$. By \eqref{eq:U-v}, $v_j^{\mathrm e}=U_j^{\mathrm e}-U_{j+2}^{\mathrm e}$ for $0\leq j\leq n$.  Thus, for $\mu=\sum_{j=0}^n a_jv_j^{\mathrm e}$ and $a_{-2}=a_{-1}=0$,
\begin{equation}\label{eq:elliptic-basis-change}
 \mu=\sum_{j=0}^n a_j\bigl(U_j^{\mathrm e}-U_{j+2}^{\mathrm e}\bigr) =\sum_{j=0}^n(a_j-a_{j-2})U_j^{\mathrm e}.
\end{equation}
Consequently, \autoref{cor:elliptic-monotone}\textup{(ii)} gives
\[
 \mu\text{ is monotone on nonempty sets} \quad\Longleftrightarrow\quad a_j-a_{j-2}\geq0\quad(1\leq j\leq n).
\]
Thus the even and odd coefficients form separate nondecreasing sequences with $a_1\geq0$ and $a_0$ unrestricted; monotonicity including $\varnothing$ additionally requires $a_0\geq0$. For example, the Euler valuation
\[
 2U_0^{\mathrm e}(A)=2\sum_{k=0}^{\lfloor n/2\rfloor}v_{2k}^{\mathrm e}(A)=\begin{cases}1,&A\neq\varnothing,\\0,&A=\varnothing,\end{cases}
\]
is monotone, but its coefficients satisfy $a_{2k}=2$ and $a_{2k+1}=0$, so $a_0>a_1$.

\section{Hyperbolic geometry and Crofton valuations}
\label{sec:hyperbolic}
We now turn to hyperbolic convexity, working throughout the remaining sections in $\mathbb H^n$ with $n\geq1$ and curvature $-1$.  This section establishes approximation in the Klein model, and the basic properties and restriction formulas of Crofton valuations.
\subsection{The Klein model and approximation}\label{subsec:hb-klein}
Denote by $\K(\mathbb H^n)$ the family of compact geodesically convex
sets, including $\varnothing$, and by $\mathcal P(\mathbb H^n)$
the family consisting of $\varnothing$ and the finite geodesic convex hulls
\[
 \operatorname{conv}_{\mathbb H}\{p_1,\ldots,p_m\},
 \qquad p_1,\ldots,p_m\in\mathbb H^n,\quad m\geq1.
\]
Members of $\mathcal P(\mathbb H^n)$ are called \emph{hyperbolic polytopes}.
Valuations are real-valued and vanish at $\varnothing$.
A valuation on $\mathcal P(\mathbb H^n)$ is \emph{simple} if it vanishes
on every hyperbolic polytope of dimension less than $n$.
Invariance always refers to the full group
$\operatorname{Isom}(\mathbb H^n)$, including reflections.

We denote hyperbolic distance by $\operatorname{dist}_{\mathbb H}$ and
hyperbolic Riemannian volume by $\operatorname{vol}_n$
\cite{doCarmoRiemannian}.  For Euclidean volume, we use
$\operatorname{vol}^{\mathrm E}_k$ for $k$-dimensional Lebesgue measure.
We equip the nonempty members of $\K(\mathbb H^n)$ with the induced
Hausdorff metric $d_{\mathbb H}$ and adjoin $\varnothing$ as an isolated
point.  A compact hyperbolic ball means a closed ball of finite radius
for $\operatorname{dist}_{\mathbb H}$.

In the Klein model $\mathbb H^n=B^n$, geodesic segments are Euclidean
segments and complete totally geodesic $k$-subspaces have the form
$B^n\cap A$, where $A$ is an affine $k$-plane
\cite[Chapter~I.6]{BridsonHaefliger}.
Thus the nonempty members of $\mathcal P(\mathbb H^n)$ are precisely
the Euclidean polytopes contained in $B^n$.
Orthogonal projections and complements are Euclidean.
The hyperbolic metric tensor and volume density are \cite[Equation~(5.1), Lemma~5.1, and Remark~5.2]{BetkenHugThale}
\begin{equation}\label{eq:hb-klein-metric-density}
 (g_{ij}(x))=\frac{I}{1-\|x\|^2}
       +\frac{xx^{\mathsf T}}{(1-\|x\|^2)^2},\qquad
 \dd\operatorname{vol}_n(x)=J(x)\dd x,\quad
  J(x)=(1-\|x\|^2)^{-(n+1)/2}.
\end{equation}

Compactness supplies a uniform bound for continuous valuations on sets
contained in a fixed compact hyperbolic ball.  Two-sided approximation extends the
representation on hyperbolic polytopes in the monotone case.

\begin{lemma}\label{lem:hb-klein-compactness}
Let $B\subseteq\mathbb H^n$ be a compact hyperbolic ball.
The family of nonempty members of $\K(\mathbb H^n)$ contained in $B$
is compact with respect to $d_{\mathbb H}$.
For every full-dimensional $K\in\K(\mathbb H^n)$, there are hyperbolic polytopes
$R_i^-,R_i^+\in\mathcal P(\mathbb H^n)$ in a common compact hyperbolic ball such that
\[
 R_i^-\subseteq K\subseteq R_i^+,\qquad
 \max\{d_{\mathbb H}(R_i^-,K),d_{\mathbb H}(R_i^+,K)\}\longrightarrow0.
\]
\end{lemma}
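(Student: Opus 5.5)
Both claims will be reduced to Euclidean facts in the Klein model $\mathbb H^n=B^n$. There geodesic convexity coincides with Euclidean convexity, and the identity map $B^n\to B^n$ is a homeomorphism between the hyperbolic and Euclidean metrics. The key observation is that a compact hyperbolic ball $B$ is a compact subset of the open ball $B^n$. Hence $B\subseteq r\overline B^n$ for some $r<1$, and on $r'\overline B^n$ with $r<r'<1$ the two metrics are bi-Lipschitz equivalent, by the explicit metric tensor \eqref{eq:hb-klein-metric-density} whose eigenvalues are bounded above and below there.

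For compactness, I would first note that by an isometry I may assume $B$ is centered at the origin, so $B$ is a Euclidean ball $r\overline B^n$ (after isometry the claim is invariant anyway). By the Blaschke selection theorem \cite[Theorem~1.8.7]{SchneiderConvexBodies}, nonempty compact Euclidean convex subsets of $r\overline B^n$ form a compact family under $d$. Limits stay inside $r\overline B^n$ and stay convex, and Euclidean convexity equals hyperbolic convexity. The bi-Lipschitz comparison of point distances transfers to the Hausdorff distances $d$ and $d_{\mathbb H}$ on subsets of $r\overline B^n$, so the two topologies agree there, and compactness follows.

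For the approximation, given full-dimensional $K$ I would pick $r<1$ with $K\subseteq r\overline B^n$ and $r'\in(r,1)$. I apply \eqref{eq:elementary-approximation} to $L=K$ with $\varepsilon_i\to0$ small enough that $\sqrt n\,\varepsilon_i<r'-r$. This gives Euclidean polytopes $P_{\varepsilon_i}\subseteq K\subseteq Q_{\varepsilon_i}\subseteq K+\sqrt n\,\varepsilon_i\overline B^n\subseteq r'\overline B^n$ with Euclidean Hausdorff distance to $K$ tending to zero. Since Euclidean polytopes in $B^n$ are exactly the nonempty hyperbolic polytopes, I set $R_i^-=P_{\varepsilon_i}$ and $R_i^+=Q_{\varepsilon_i}$. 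All of these lie in $r'\overline B^n$, which is contained in a compact hyperbolic ball centered at $0$ because its hyperbolic radius $\operatorname{artanh} r'$ is finite. The bi-Lipschitz comparison then converts $d\to0$ into $d_{\mathbb H}\to0$.

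The only step needing care is the metric comparison. I expect it to be routine: on $r'\overline B^n$ the Riemannian distance is bounded by the length of the Euclidean segment, which stays in the convex set $r'\overline B^n$, giving an upper Lipschitz constant $\sup\|g\|^{1/2}$. The reverse inequality holds because $g\geq I$ everywhere on $B^n$, so hyperbolic lengths dominate Euclidean lengths; this bound is global, so no issue arises from geodesics leaving the ball. Full-dimensionality of $K$ is not actually needed here.
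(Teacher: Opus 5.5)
Your proposal is correct and follows the paper's proof essentially step for step: the Klein-model comparison $d\le d_{\mathbb H}\le(1-r^2)^{-1}d$ on $r\overline B^n$, Blaschke selection for compactness, and the inner and outer Euclidean polytopes from \eqref{eq:elementary-approximation} for the approximation. The differences are cosmetic: you recentre $B$ by an isometry and fix $r'$ in advance, whereas the paper uses closedness of $B$ and discards finitely many terms. You are also right that full-dimensionality of $K$ is not used.
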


\begin{proof}
We first prove $d_{\mathbb H}$-compactness by comparison with the
Euclidean Hausdorff distance $d$.
For $0<r<1$, \eqref{eq:hb-klein-metric-density} and the Cauchy--Schwarz inequality imply
\[
 \|v\|^2\leq v^{\mathsf T}(g_{ij}(x))v
 \leq(1-r^2)^{-2}\|v\|^2
 \qquad(x\in r\overline B^n,\ v\in\R^n).
\]
For $a,b\in r\overline B^n$, the segment $[a,b]\subseteq r\overline B^n$
is a minimizing hyperbolic geodesic.  With $x_t=(1-t)a+tb$, its length satisfies
\[
 \|a-b\|\leq\operatorname{dist}_{\mathbb H}(a,b)
 =\int_0^1\sqrt{(b-a)^{\mathsf T}(g_{ij}(x_t))(b-a)}\dd t
 \leq(1-r^2)^{-1}\|a-b\|.
\]
Taking the infimum over one set and the supremum over the other,
in both directions, yields
\begin{equation}\label{eq:hb-klein-metric-comparison}
 d(A,D)\leq d_{\mathbb H}(A,D)\leq(1-r^2)^{-1}d(A,D)
 \qquad(\varnothing\neq A,D\subseteq r\overline B^n\text{ compact}).
\end{equation}
Choose $0<r<1$ with $B\subseteq r\overline B^n$.
By the Blaschke selection theorem
\cite[Theorem~1.8.7]{SchneiderConvexBodies}, any sequence of nonempty
compact Euclidean convex sets $K_i\subseteq B$ has a subsequence $K_{i_j}$ converging in $d$
to a nonempty compact Euclidean convex set $K$.  For $x\in K$,
\[
 \inf_{y\in B}\|x-y\|\leq d(K,K_{i_j})\longrightarrow0.
\]
Since $B$ is closed, $K\subseteq B$; then
\eqref{eq:hb-klein-metric-comparison} implies
$d_{\mathbb H}(K_{i_j},K)\to0$, proving compactness.

For the approximation, choose $0<r<1$ with $K\subseteq rB^n$.
Apply \eqref{eq:elementary-approximation} with
$\varepsilon=(i\sqrt n)^{-1}$ to obtain Euclidean polytopes satisfying
\[
 R_i^-\subseteq K\subseteq R_i^+,\qquad
 \max\{d(R_i^-,K),d(R_i^+,K)\}\leq i^{-1}.
\]
For all sufficiently large $i$,
\[
 \max_{x\in R_i^+}\|x\|\leq\max_{x\in K}\|x\|+i^{-1}<r.
\]
Thus $R_i^-$ and $R_i^+$ lie in $r\overline B^n$, the compact hyperbolic ball
of radius $\operatorname{arctanh}r$ about $0$, and are hyperbolic
polytopes because convex hulls agree in the Klein model.
After discarding finitely many terms,
\eqref{eq:hb-klein-metric-comparison} completes the proof:
\[
 \max\{d_{\mathbb H}(R_i^-,K),d_{\mathbb H}(R_i^+,K)\}
 \leq(1-r^2)^{-1}i^{-1}\longrightarrow0.\qedhere
\]
\end{proof}

\subsection{Crofton valuations and restriction}\label{subsec:hb-crofton}

Set \(\chi(K)\coloneqq1\) for every nonempty
\(K\in\K(\mathbb H^n)\), and set \(\chi(\varnothing)\coloneqq0\).
For \(1\leq k<n\), \(\operatorname{AGr}_k(\mathbb H^n)\) denotes the
space of complete \(k\)-dimensional totally geodesic subspaces of
\(\mathbb H^n\).  Every \(L\in\operatorname{AGr}_k(\mathbb H^n)\) has a
unique Klein
representation
\[
 (u+L_0)\cap B^n,
 \qquad L_0\in\Gr(\R^n,k),\quad u\in L_0^\perp,\quad \|u\|<1.
\]
Write \(\nu_k\) for the invariant probability measure on
\(\Gr(\R^n,k)\), and \(\dd u\) for Euclidean Lebesgue measure on
\(L_0^\perp\).  This space of totally geodesic subspaces admits a
nonzero isometry-invariant Radon measure
\cite[Section~17.3, pp.~304--306]{Santalo}.  Since the isometry group
acts transitively, this measure is unique up to a positive factor
\cite[Theorem~13.3.1]{SchneiderWeil}.  We normalize it, denoting it by
\(m_k^{(n)}\), so that
the associated quermassintegrals have the Andrews--Wei normalization
\cite[Equations~(1.3)--(1.6)]{AndrewsWei}.
In these coordinates, the measure takes the form \cite[Lemma~5.3]{BetkenHugThale}:
\begin{equation}\label{eq:hb-plane-density}
 \dd m_k^{(n)}(L)=\rho_k(L_0,u)\dd u\,\dd\nu_k(L_0),\qquad
 \rho_k(L_0,u)=\beta_{n,k}(1-\|u\|^2)^{-(n+1)/2},\quad\beta_{n,k}>0.
\end{equation}
By \cite[Equation~(6)]{HeroldHugThale}, this measure is proportional to
\[
 \cosh^k\operatorname{dist}_{\mathbb H}(0,u)\,
 \dd\operatorname{vol}_{n-k}(u)\,\dd\nu_k(L_0).
\]
In the Klein model, \cite[Theorem~6.1.1]{Ratcliffe} and
\eqref{eq:hb-klein-metric-density} give the two density factors
$(1-\|u\|^2)^{-k/2}$ and
$(1-\|u\|^2)^{-(n-k+1)/2}$, respectively.
Their product is the density in \eqref{eq:hb-plane-density}, which is
positive and smooth for $\|u\|<1$.
Define
\(W_0^{(n)}\coloneqq\operatorname{vol}_n\) and, for \(1\leq k<n\),
\begin{equation}\label{eq:hb-crofton}
 W_k^{(n)}(K)
 \coloneqq\int_{\operatorname{AGr}_k(\mathbb H^n)}
   \one_{\{L\cap K\neq\varnothing\}}\,\dd m_k^{(n)}(L),
 \qquad K\in\K(\mathbb H^n),
\end{equation}
where \(W_k^{(n)}(\varnothing)\coloneqq0\).  With this scale,
\[
 W_1^{(n)}(K)=\frac1n\operatorname{area}_{n-1}(\partial K)
 \quad(n\geq2)
\]
for every full-dimensional \(K\).  This normalization agrees with
Andrews and Wei after replacing their
ambient dimension \(n+1\) by \(n\).
The Crofton formula \eqref{eq:hb-crofton} also defines \(W_k^{(n)}\) on lower-dimensional
compact hyperbolic convex sets.

\begin{lemma}\label{lem:hb-crofton-properties}
The valuations $\chi,W_0^{(n)},\ldots,W_{n-1}^{(n)}$ on
$\K(\mathbb H^n)$ are continuous, isometry-invariant and monotone,
and their restrictions to $\mathcal P(\mathbb H^n)$ are linearly independent.
Moreover, $W_k^{(n)}(\{x\})=0$ for every $x\in\mathbb H^n$ and $0\leq k<n$.
For $n\geq2$ and every complete totally geodesic hyperplane
$H\cong\mathbb H^{n-1}$,
\begin{equation}\label{eq:hb-restriction}
 \left.W_0^{(n)}\right|_{\K(H)}=0,\qquad
 \left.W_k^{(n)}\right|_{\K(H)}
 =\alpha_{n,k}W_{k-1}^{(n-1)}\qquad(1\leq k<n),
\end{equation}
where $\alpha_{n,k}>0$ depends only on $n$ and $k$.
\end{lemma}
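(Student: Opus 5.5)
I will verify the listed properties one at a time. Most follow directly from the Crofton representation \eqref{eq:hb-crofton} together with the explicit Klein-model densities \eqref{eq:hb-klein-metric-density} and \eqref{eq:hb-plane-density}.

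\emph{Invariance and the valuation property.} Isometry invariance of $\chi$ is trivial. For $W_0^{(n)}=\operatorname{vol}_n$ it holds because the Riemannian volume is invariant, and for $W_k^{(n)}$ with $k\geq1$ it follows from invariance of $m_k^{(n)}$. For the valuation property, let $K,L,K\cup L\in\K(\mathbb H^n)$ and let $E$ be a totally geodesic subspace. If $E$ meets both $K$ and $L$, then $E\cap K$ and $E\cap L$ are convex, their union $E\cap(K\cup L)$ is convex, and hence they intersect. This gives the pointwise identity
\[
\one_{\{E\cap K\neq\varnothing\}}+\one_{\{E\cap L\neq\varnothing\}}=\one_{\{E\cap(K\cup L)\neq\varnothing\}}+\one_{\{E\cap K\cap L\neq\varnothing\}},
\]
and integrating against $m_k^{(n)}$ yields the valuation property for $W_k^{(n)}$. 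The same argument with $E$ a point covers $\chi$ and volume. Monotonicity is immediate from inclusion of the hitting indicators, and all values are nonnegative. For $k\geq1$ we have $W_k^{(n)}(\{x\})=0$, since the $k$-planes through a fixed point form an $m_k^{(n)}$-null set: in the coordinates of \eqref{eq:hb-plane-density}, for each $L_0$ only the single vector $u=\pi_{L_0^\perp}x$ qualifies, and this is Lebesgue-null in $L_0^\perp$ because $\dim L_0^\perp\geq1$.

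\emph{Continuity.} This is the step that needs the most care. Suppose $K_i\to K$ in $d_{\mathbb H}$ with $K$ nonempty. All sets then lie in a fixed Euclidean ball $r\overline B^n$. I would use dominated convergence with respect to the finite measure obtained by restricting $m_k^{(n)}$ to the planes meeting that ball, which is finite because $\|u\|\leq r$. For every plane $E$ that is not a supporting plane of $K$, the indicator $\one_{\{E\cap K_i\neq\varnothing\}}$ eventually equals $\one_{\{E\cap K\neq\varnothing\}}$. Indeed, if $E$ meets the relative interior of $K$ in a suitable sense, or misses $K$ at positive distance, the indicator is stable under small perturbations. The planes that touch $K$ without crossing it are null: for fixed $L_0$, the projection of $K$ to $L_0^\perp$ is convex, these planes correspond to boundary points of that projection, and the boundary of a convex body is Lebesgue-null. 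For volume, continuity follows from the same dominated convergence applied to indicators of points, using that the boundary of a convex set has Lebesgue measure zero. This settles $k=0$ as well. Continuity of $\chi$ holds because the empty set is isolated.

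\emph{Restriction to a hyperplane.} Using invariance, I would place $H=B^n\cap\{x_n=0\}$. Since $H$ is null, $W_0^{(n)}$ vanishes on $\K(H)$. For $k\geq1$, the functional $K\mapsto W_k^{(n)}(K)$ restricted to $\K(H)$ is a continuous, $\operatorname{Isom}(H)$-invariant valuation. The key computation is that a generic $k$-plane $E$ meets $H$ in a $(k-1)$-plane of $H$, namely when $E\not\subseteq H$ and $E\cap H\neq\varnothing$. The pushforward of $m_k^{(n)}$, restricted to planes meeting $H$, under the map $E\mapsto E\cap H$ is a measure on $\operatorname{AGr}_{k-1}(H)$. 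This measure is invariant under $\operatorname{Isom}(H)$, because every isometry of $H$ extends to $\mathbb H^n$ preserving $H$, and it is Radon, since planes meeting a compact subset of $H$ form a set of finite $m_k^{(n)}$-measure. It is nonzero. By uniqueness of invariant measures \cite[Theorem~13.3.1]{SchneiderWeil}, it equals $\alpha_{n,k}m_{k-1}^{(n-1)}$ with $\alpha_{n,k}>0$. When $k=1$, the target is the space of points of $H$, so the pushforward is a multiple of $\operatorname{vol}_{n-1}$ and the identity reads $W_1^{(n)}|_{\K(H)}=\alpha_{n,1}W_0^{(n-1)}$. Since $E\cap K\neq\varnothing$ if and only if $(E\cap H)\cap K\neq\varnothing$ for $K\subseteq H$, this gives \eqref{eq:hb-restriction}.

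\emph{Linear independence.} I would argue by induction on $n$ using \eqref{eq:hb-restriction}. Suppose $a\chi+\sum_k b_kW_k^{(n)}=0$ on $\mathcal P(\mathbb H^n)$. Evaluating on a point gives $a=0$. Restricting to $H$ gives $\sum_{k\geq1}b_k\alpha_{n,k}W_{k-1}^{(n-1)}=0$ on $\mathcal P(H)$, so by the inductive hypothesis $b_k=0$ for all $k\geq1$. Finally, evaluating on a full-dimensional simplex, which has positive volume, gives $b_0=0$. For the base case $n=1$, where the valuations are $\chi$ and length, points and segments separate them directly.
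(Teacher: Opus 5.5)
Your proposal is correct and follows essentially the same route as the paper: hitting-indicator identities for the valuation property, dominated convergence using Lebesgue-null boundaries of the projections of $K$ for continuity, the pushforward of $m_k^{(n)}$ under $E\mapsto E\cap H$ together with uniqueness of invariant Radon measures for the restriction formula, and induction by restriction for linear independence. The paper spells out three points you leave implicit: the indicators converge at every plane $(u+L_0)\cap B^n$ with $u\notin\partial(\pi_{L_0^\perp}K)$, by a separation argument using convexity of the $\pi_{L_0^\perp}K_i$; planes contained in $H$ are $m_k^{(n)}$-null; and for $\chi$ the relevant instance of your intersection argument is $E=\mathbb H^n$, not $E$ a point.
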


\begin{proof}
We begin with the valuation, invariance and monotonicity assertions.
Regard points as $0$-planes and set $m_0^{(n)}=\operatorname{vol}_n$
in each dimension.  Then \eqref{eq:hb-plane-density} and
\eqref{eq:hb-crofton} also hold for $k=0$, with $E=\{0\}$,
$\nu_0$ the unit mass at $\{0\}$, and $\beta_{n,0}=1$.
For $0\leq k<n$ and $L=(u+E)\cap B^n$, put
\[
 h_K(L)\coloneqq\one_{\{L\cap K\neq\varnothing\}}
 =\one_{\pi_{E^\perp}K}(u)
\]
and $h_\varnothing=0$.  For $K\neq\varnothing$, joint Borel measurability
follows from continuity of
$(E,u)\mapsto\min_{x\in K}\|u-\pi_{E^\perp}x\|$.
The family $\mathcal A_r=\{L:L\cap r\overline B^n\neq\varnothing\}$
is compact for $0<r<1$: its parameters satisfy
$E\in\Gr(\R^n,k)$, $u\in E^\perp$ and $\|u\|\leq r$.
Hence, for $K\subseteq r\overline B^n$,
\[
 0\leq W_k^{(n)}(K)\leq m_k^{(n)}(\mathcal A_r)<\infty.
\]
If $K\cup K'$ is convex, the closed sets $K\cap L$ and $K'\cap L$
have connected union and therefore intersect whenever both are nonempty.
Consequently,
\[
 h_K+h_{K'}=h_{K\cup K'}+h_{K\cap K'}.
\]
Integration proves that $W_k^{(n)}$ is a valuation.  Invariance of
$m_k^{(n)}$ gives, for every isometry $g$,
\[
 W_k^{(n)}(gK)
 =\int_{\operatorname{AGr}_k(\mathbb H^n)}h_{gK}(gL)\dd m_k^{(n)}(L)
 =W_k^{(n)}(K).
\]
Monotonicity follows by integrating $h_K\leq h_{K'}$ for $K\subseteq K'$.

For continuity, suppose that $K_i\to K$ in $d_{\mathbb H}$,
with all sets nonempty.
All these sets lie in $r\overline B^n$ for some $r<1$ after discarding
finitely many terms.  For fixed $E$, write
$D_i=\pi_{E^\perp}K_i$ and $D=\pi_{E^\perp}K$.  By
\eqref{eq:hb-klein-metric-comparison},
\[
 \delta_i\coloneqq d(D_i,D)\leq d(K_i,K)\leq d_{\mathbb H}(K_i,K)\to0.
\]
If $u\notin D$, then $\operatorname{dist}(u,D)>\delta_i$ eventually,
so $u\notin D_i$.  If $u+\varepsilon\overline B_{E^\perp}\subseteq D$
for some $\varepsilon>0$, then, for $\delta_i<\varepsilon$ and every unit
$v\in E^\perp$,
\[
 \max_{w\in D_i}\langle w,v\rangle
 \geq\max_{w\in D}\langle w,v\rangle-\delta_i
 \geq\langle u,v\rangle+\varepsilon-\delta_i
 >\langle u,v\rangle.
\]
Separation therefore implies $u\in D_i$.
Thus $\one_{D_i}\to\one_D$ off $\partial D$, with boundary taken in
$E^\perp$.  This boundary is Lebesgue-null: if $D$ has empty interior,
it lies in a proper affine subspace; otherwise, for $a\in\operatorname{int}D$
and $0<t<1$, the inclusion $a+t(D-a)\subseteq\operatorname{int}D$ gives
\[
 \operatorname{vol}^{\mathrm E}_{n-k}(\partial D)
 \leq(1-t^{n-k})\operatorname{vol}^{\mathrm E}_{n-k}(D)
 \longrightarrow0\qquad(t\uparrow1).
\]
Fubini and \eqref{eq:hb-plane-density} now imply $h_{K_i}\to h_K$
almost everywhere for $m_k^{(n)}$.
Since $|h_{K_i}-h_K|\leq\one_{\mathcal A_r}$ and
$m_k^{(n)}(\mathcal A_r)<\infty$, dominated convergence yields
\[
 |W_k^{(n)}(K_i)-W_k^{(n)}(K)|
 \leq\int_{\operatorname{AGr}_k(\mathbb H^n)}
       |h_{K_i}(L)-h_K(L)|\dd m_k^{(n)}(L)\longrightarrow0.
\]
Continuity at $\varnothing$ follows because it is isolated.

For $K=\{x\}$, every projection $\pi_{E^\perp}K$ is a singleton in the positive-dimensional space $E^\perp$, so \eqref{eq:hb-plane-density} gives $W_k^{(n)}(\{x\})=0$. For the Euler valuation $\chi$, consider nonempty $K,K'$ with convex union. Connectedness implies $K\cap K'\neq\varnothing$, so the valuation identity for $\chi$ reads $1+1=1+1$; if either set is empty, the identity is immediate.
The values $\chi(K)=1$ for $K\neq\varnothing$ and $\chi(\varnothing)=0$, with $\varnothing$ isolated, also prove invariance, monotonicity and continuity.

For the restriction formula \eqref{eq:hb-restriction}, fix $1\leq k<n$. Invariance reduces us to $H=B^n\cap e_n^\perp$.  The exceptional directions satisfy
\begin{equation}\label{eq:hb-exceptional-directions}
 \nu_k\{E\in\Gr(\R^n,k):E\subseteq e_n^\perp\}=0.
\end{equation}
Indeed, the span of $k$ independent standard Gaussian vectors has law
$\nu_k$, and containment in this hyperplane requires every vector to
have zero last coordinate.
By \eqref{eq:hb-plane-density}, the planes contained in $H$ are
therefore $m_k^{(n)}$-null.
Define
\[
 \mathcal T_H=\{L:L\cap H\neq\varnothing,\ L\not\subseteq H\},\qquad
 r_H:\mathcal T_H\to\operatorname{AGr}_{k-1}(H),\quad L\mapsto L\cap H.
\]
For $v=\pi_Ee_n\neq0$, the affine intersection $(u+E)\cap e_n^\perp$
has direction $F=E\cap e_n^\perp$ and orthogonal offset $z$, where
\begin{equation}\label{eq:hb-intersection-coordinates}
 z=u-\frac{\langle u,e_n\rangle}{\|v\|^2}v,\qquad
  \pi_F=\pi_E-\frac{vv^{\mathsf T}}{\|v\|^2}.
\end{equation}
Indeed, $z\in(u+E)\cap e_n^\perp$, $z\perp F$ and $E=F\oplus\R v$.
Thus $\mathcal T_H$ is the open set specified by $v\neq0$ and $\|z\|<1$,
and \eqref{eq:hb-intersection-coordinates} shows that $r_H(L)=(z+F)\cap B^n$ is continuous.
Taking $u=0$ and $e_n\in E$ shows that $\mathcal T_H\neq\varnothing$.

The positive density in \eqref{eq:hb-plane-density} therefore makes
$\eta=(r_H)_*(m_k^{(n)}|_{\mathcal T_H})$ a nonzero Borel measure.
For a nonempty compact family $C\subseteq\operatorname{AGr}_{k-1}(H)$,
the norm of the orthogonal offset is continuous and attains a maximum
less than $1$.  Thus, for some $0<r<1$, every member of $C$ meets the
compact hyperbolic ball $B_r=H\cap r\overline B^n$ in $H$, and
\[
 \eta(C)\leq m_k^{(n)}\{L:L\cap B_r\neq\varnothing\}
          =W_k^{(n)}(B_r)<\infty.
\]
Hence $\eta$ is Radon on the locally compact, second-countable space
$\operatorname{AGr}_{k-1}(H)$.

In the hyperboloid model, an isometry $h$ of $H$ is a Lorentz transformation \cite[Theorem~3.2.3]{Ratcliffe};
extending it by the identity on the normal line gives an ambient isometry $g$.
Since $r_H(gL)=h\,r_H(L)$, for every Borel $A\subseteq\operatorname{AGr}_{k-1}(H)$,
\[
 \eta(hA)=m_k^{(n)}(r_H^{-1}(hA))
 =m_k^{(n)}(g\,r_H^{-1}(A))=\eta(A).
\]
Since $\operatorname{Isom}(H)$ acts transitively on
$\operatorname{AGr}_{k-1}(H)$, uniqueness of invariant Radon measures
\cite[Theorem~13.3.1]{SchneiderWeil} implies
$\eta=\alpha_{n,k}m_{k-1}^{(n-1)}$ for some $\alpha_{n,k}>0$.
For $K\in\K(H)$, planes outside $\mathcal T_H$ either miss $K$ or lie in
the null family of planes contained in $H$.  Therefore
\[
 W_k^{(n)}(K)
 =\int_{\operatorname{AGr}_{k-1}(H)}
       \one_{\{M\cap K\neq\varnothing\}}\dd\eta(M)
 =\alpha_{n,k}W_{k-1}^{(n-1)}(K).
\]
Also, $W_0^{(n)}(K)=0$ on $H$.  Ambient isometries between complete totally geodesic hyperplanes
show that $\alpha_{n,k}$ is independent of $H$.

For linear independence, suppose that $a\chi+\sum_{k=0}^{n-1}b_kW_k^{(n)}=0$ on
$\mathcal P(\mathbb H^n)$.
At a point this identity reduces to $a=0$.
In dimension $1$, a nondegenerate hyperbolic segment $I$ gives
$b_0\operatorname{vol}_1(I)=0$, hence $b_0=0$.
For $n\geq2$, restriction to a complete totally geodesic hyperplane and induction using
\eqref{eq:hb-restriction} yield
\[
 \sum_{k=1}^{n-1}\alpha_{n,k}b_kW_{k-1}^{(n-1)}=0
 \quad\Longrightarrow\quad b_1=\cdots=b_{n-1}=0.
\]
Evaluation on a full-dimensional hyperbolic polytope, whose hyperbolic volume is positive, forces $b_0=0$.
\end{proof}

\section{Simple hyperbolic valuations}
\label{sec:hyperbolic-simple}
We first establish finite cutting, local boundedness and apex measurability, then use reflection averaging to identify simple invariant valuations with multiples of hyperbolic volume.

\subsection{Cutting and apex measurability}

For a simple valuation $\lambda:\mathcal P(\mathbb H^n)\to\R$,
$Q\in\mathcal P(\mathbb H^n)$, and a complete totally geodesic
hyperplane $H$ with closed halfspaces $H^\pm$, the valuation identity reduces to
\begin{equation}\label{eq:hb-single-cut}
 \lambda(Q)=\lambda(Q\cap H^+)+\lambda(Q\cap H^-),
\end{equation}
since $Q\cap H^\pm$ are hyperbolic polytopes and
$\lambda(Q\cap H)=0$ by simplicity.

\begin{lemma}\label{lem:hb-finite-cuts}
Let $\lambda:\mathcal P(\mathbb H^n)\to\R$ be a simple valuation, $Q\in\mathcal P(\mathbb H^n)$, and $H_1,\ldots,H_q$ complete totally geodesic hyperplanes, where $q\geq0$. For the closed halfspaces $H_i^\pm$ bounded by $H_i$, set $Q_\varepsilon=Q\cap\bigcap_{i=1}^qH_i^{\varepsilon_i}$, where $\varepsilon\in\{+,-\}^q$. Then
\[
 \lambda(Q)=
 \sum_{\varepsilon\in\{+,-\}^q,\,\dim Q_\varepsilon=n}
 \lambda(Q_\varepsilon).
\]
\end{lemma}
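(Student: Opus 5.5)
The plan is induction on $q$, using the single-cut identity \eqref{eq:hb-single-cut} at each step. For $q=0$ there is one empty sign vector and $Q_\varepsilon=Q$. The sum is then $\lambda(Q)$ if $\dim Q=n$. If $\dim Q<n$, the sum is empty and equals $0=\lambda(Q)$ by simplicity; this covers $Q=\varnothing$ too, since $\lambda(\varnothing)=0$. So the base case holds.

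For the inductive step, assume the formula for $q-1$ hyperplanes. Write $Q'_{\varepsilon'}=Q\cap\bigcap_{i<q}H_i^{\varepsilon_i}$ for $\varepsilon'\in\{+,-\}^{q-1}$. By the induction hypothesis,
\[
 \lambda(Q)=\sum_{\dim Q'_{\varepsilon'}=n}\lambda(Q'_{\varepsilon'}).
\]
Each $Q'_{\varepsilon'}$ is a hyperbolic polytope: in the Klein model, intersecting a Euclidean polytope in $B^n$ with a closed affine halfspace gives again a Euclidean polytope in $B^n$, or the empty set. Applying \eqref{eq:hb-single-cut} with $H=H_q$ to each term yields
\[
 \lambda(Q'_{\varepsilon'})=\lambda(Q_{(\varepsilon',+)})+\lambda(Q_{(\varepsilon',-)}).
\]

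Next I drop the pieces with $\dim Q_{(\varepsilon',\pm)}<n$, since $\lambda$ vanishes on them by simplicity. After that, the remaining terms must be matched with the target index set. If $\dim Q_\varepsilon=n$, then $\dim Q'_{\varepsilon'}=n$ as well, because $Q_\varepsilon\subseteq Q'_{\varepsilon'}$. Hence the $n$-dimensional cells arising from $n$-dimensional parents are exactly all $\varepsilon$ with $\dim Q_\varepsilon=n$. On the other side, parents of dimension less than $n$ were already omitted at the previous stage, and their children have dimension less than $n$ anyway. This bookkeeping finishes the induction.

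The one point needing care, and the main obstacle, is that \eqref{eq:hb-single-cut} genuinely applies to each $Q'_{\varepsilon'}$. This requires $Q'_{\varepsilon'}\cap H_q^\pm$ and $Q'_{\varepsilon'}\cap H_q$ to lie in $\mathcal P(\mathbb H^n)$, and the valuation identity to be usable for the pair $Q'\cap H_q^+$, $Q'\cap H_q^-$. Both pieces, their union $Q'$, and their intersection $Q'\cap H_q$ are hyperbolic polytopes by the Klein-model description of hyperbolic polytopes as Euclidean polytopes in $B^n$. The boundary case where some piece is empty is harmless, since $\lambda(\varnothing)=0$.
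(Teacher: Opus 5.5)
Your proposal is correct and follows the paper's argument: iterate the single-cut identity \eqref{eq:hb-single-cut} over $H_1,\ldots,H_q$, discard the lower-dimensional cells using simplicity and $\lambda(\varnothing)=0$, and treat $q=0$ separately. The only difference is cosmetic: you discard degenerate pieces at each inductive step, whereas the paper expands over all $2^q$ sign vectors and discards them once at the end.
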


\begin{proof}
Iterating~\eqref{eq:hb-single-cut} over $H_1,\ldots,H_q$ yields
\[
 \lambda(Q)
 =\sum_{\varepsilon\in\{+,-\}^q}\lambda(Q_\varepsilon)
 =\sum_{\varepsilon\in\{+,-\}^q,\,\dim Q_\varepsilon=n}
       \lambda(Q_\varepsilon),
\]
where the last equality uses simplicity and $\lambda(\varnothing)=0$.
For $q=0$, the only sign tuple is empty and $Q_\varepsilon=Q$,
so the same equalities hold.
\end{proof}

We do not assume monotonicity of the simple residual.  Instead, the
following lemma derives its local boundedness and apex measurability
from the original valuation and the continuous term subtracted from it.
Write $[Q,x]=\operatorname{conv}_{\mathbb H}(Q\cup\{x\})$.

\begin{lemma}\label{lem:hb-apex}
Let $\mu,\nu:\K(\mathbb H^n)\to\R$ be valuations.
Assume that $\mu$ is isometry-invariant and either continuous or
monotone on nonempty sets, that $\nu$ is continuous, and that
$\lambda=\mu-\nu$ is simple on hyperbolic polytopes.
Then, for every compact hyperbolic ball $B\subseteq\mathbb H^n$,
\begin{equation}\label{eq:hb-local-bound}
 \sup_{R\in\mathcal P(\mathbb H^n),\,R\subseteq B}
 |\lambda(R)|<\infty.
\end{equation}
For every nondegenerate hyperbolic $(n-1)$-simplex $Q$, the function
$x\mapsto\lambda([Q,x])$ is measurable with respect to completed hyperbolic volume.
\end{lemma}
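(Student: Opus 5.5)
The plan has two parts, treated separately: the local bound and the apex measurability. For each I will split into the case where $\mu$ is continuous and the case where $\mu$ is monotone on nonempty sets. The continuous term $\nu$ causes no difficulty in either part. By \autoref{lem:hb-klein-compactness}, the nonempty members of $\K(\mathbb H^n)$ contained in $B$ form a $d_{\mathbb H}$-compact family, so $\nu$ is bounded there. Also $x\mapsto[Q,x]$ is continuous in $d_{\mathbb H}$: in the Klein model it is a Euclidean convex hull of a polytope and a moving point, and \eqref{eq:hb-klein-metric-comparison} applies on a ball $r\overline B^n$. Hence $x\mapsto\nu([Q,x])$ is continuous and therefore Borel. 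It thus suffices to prove both assertions with $\mu$ in place of $\lambda$.

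\emph{Local bound.} If $\mu$ is continuous, the same compactness argument bounds $\mu$ on the sets contained in $B$. If $\mu$ is monotone on nonempty sets and $\varnothing\neq R\subseteq B$, then $\mu(\{x\})\leq\mu(R)\leq\mu(B)$. By isometry invariance, $\mu(\{x\})$ is a constant independent of $x$. This bounds $\mu$ on such $R$; the empty set contributes $0$. Combining with the bound on $\nu$ gives \eqref{eq:hb-local-bound}.

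\emph{Apex measurability.} If $\mu$ is continuous, $x\mapsto\mu([Q,x])$ is continuous and we are done. In the monotone case I will mimic \autoref{lem:apex}. Let $H$ be the hyperplane spanned by $Q$, and work on each open halfspace separately; $H$ itself is a volume-null set. In the Klein model, fix a point $c\in\operatorname{relint}Q$ and consider the convex cone
\[
 C_x=\{y:[Q,y]\subseteq[Q,x]\}.
\]
Monotonicity gives $f(y)\leq f(x)$ whenever $y\in[Q,x]$, where $f(x)=\mu([Q,x])$. The difficulty is that the natural order is not a translation-invariant cone order, unlike in the gnomonic picture, where apexes are moved in the direction of the cone $Q^\vee$. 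To fix this I will use a projective change of coordinates that sends $H$ to the hyperplane at infinity. Such a map preserves convex hulls and segments, since Klein geodesics are straight lines. Choose it so that the open halfspace maps into an affine chart in which $Q$ becomes a set of directions at infinity. Then $[Q,x]$ becomes $x+\widetilde C$ for a fixed convex cone $\widetilde C$ with nonempty interior, the cone spanned by the directions corresponding to $Q$. It has nonempty interior because $Q$ is nondegenerate of dimension $n-1$. In these coordinates $[Q,x+h]\subseteq[Q,x]$ for $h\in\widetilde C$, so $f(x+h)\leq f(x)$ on an open domain $D$, namely the image of the halfspace intersected with $B^n$. \autoref{lem:bbl} then gives Lebesgue measurability on $D$.

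Finally I will transfer measurability back. The projective map is a diffeomorphism between open sets, so it preserves Lebesgue null sets. Hyperbolic volume has the positive density $J$ from \eqref{eq:hb-klein-metric-density}, so by \eqref{eq:positive-density-nullsets} Lebesgue measurability is equivalent to measurability for completed hyperbolic volume, exactly as at the end of \autoref{lem:apex}. I expect the main obstacle to be setting up this projective coordinate change carefully. The construction is the same as the gnomonic picture on the sphere: regard $\mathbb R^n$ as an affine chart $\{t=1\}$ in $\mathbb R^{n+1}$ and recenter the chart so that $\operatorname{aff}Q$ lies at infinity. One has to check that convex hulls, the domain, and null sets are all handled correctly.
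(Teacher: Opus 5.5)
Your proposal is correct and takes essentially the paper's route. The local bound comes from compactness in the continuous case and from the sandwich $\mu(\{x\})\leq\mu(R)\leq\mu(B)$ in the monotone case. Apex measurability comes from a projective chart sending the hyperplane of $Q$ to infinity, so that $[Q,x]$ becomes a translate of a fixed cone with nonempty interior, followed by \autoref{lem:bbl} and a positive-density change of variables. The paper writes your chart explicitly as $\phi_\varepsilon(v,u)=(v/u,\varepsilon/u)$ on $D=\{(v,u):u>\sqrt{1+\|v\|^2}\}$, with cone $\{s(y,1):s\geq0,\ y\in Q'\}$, which is exactly the construction you anticipated.
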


\begin{proof}
For the local bound \eqref{eq:hb-local-bound}, fix a compact hyperbolic ball $B$.  The family
$\mathcal K_B=\{\varnothing\neq K\in\K(\mathbb H^n):K\subseteq B\}$
is compact by \autoref{lem:hb-klein-compactness}.
If $\mu$ is continuous, then $\lambda=\mu-\nu$ is continuous and bounded
on $\mathcal K_B$.
If $\mu$ is monotone on nonempty sets, isometry invariance makes
$a=\mu(\{p\})$ independent of $p$, and
\[
 a\leq\mu(R)\leq\mu(B),\qquad
 |\lambda(R)|\leq\max\{|a|,|\mu(B)|\}
       +\sup_{K\in\mathcal K_B}|\nu(K)|<\infty
 \quad(R\in\mathcal K_B).
\]
The supremum is finite by continuity of $\nu$ on $\mathcal K_B$.
Since $\lambda(\varnothing)=0$, both cases prove \eqref{eq:hb-local-bound}.

For apex measurability, fix a nondegenerate hyperbolic $(n-1)$-simplex $Q$.
Klein convex hulls satisfy
\[
 d([Q,x],[Q,y])\leq\|x-y\|:
\]
the point $(1-s)z+sx$, with $z\in Q$ and $0\leq s\leq1$, is within
Euclidean distance $s\|x-y\|$ of $(1-s)z+sy$, and conversely.
Together with \eqref{eq:hb-klein-metric-comparison}, this proves continuity
of $x\mapsto[Q,x]$ in $d_{\mathbb H}$.
Hence $x\mapsto\nu([Q,x])$ is continuous; if $\mu$ is continuous,
$x\mapsto\lambda([Q,x])$ is continuous as well.

It remains to prove measurability when $\mu$ is monotone.
Choose Klein coordinates in which $Q=Q'\times\{0\}$ and
$H=B^n\cap\{t=0\}$ contains $Q$.
For either $\varepsilon\in\{-1,1\}$, write
$\Omega_\varepsilon=B^n\cap\{\varepsilon t>0\}$.  The map
\[
 \phi_\varepsilon:D\to\Omega_\varepsilon,\qquad
 \phi_\varepsilon(v,u)=(v/u,\varepsilon/u),\qquad
 D=\{(v,u)\in\R^{n-1}\times\R:u>\sqrt{1+\|v\|^2}\},
\]
is a diffeomorphism with inverse
$(z,t)\mapsto(\varepsilon z/t,\varepsilon/t)$.
The convex cone $C_Q=\{s(y,1):s\geq0,\ y\in Q'\}$ contains the nonempty
open set $\{(v,u):u>0,\ v/u\in\operatorname{int}Q'\}$.
Put $f_\varepsilon(w)=\mu([Q,\phi_\varepsilon(w)])$.
For $w=(v,u)$ and $w'=w+s(y,1)$ in $D$, with $s\geq0$ and $y\in Q'$,
\[
 \phi_\varepsilon(w')
 =\frac{u}{u+s}\phi_\varepsilon(w)+\frac{s}{u+s}(y,0).
\]
Since these coefficients are nonnegative and sum to $1$,
\[
 [Q,\phi_\varepsilon(w')]\subseteq[Q,\phi_\varepsilon(w)],\qquad
 f_\varepsilon(w')\leq f_\varepsilon(w).
\]
Thus $-f_\varepsilon$ is $C_Q$-monotone on the open set $D$,
and \autoref{lem:bbl} implies that $f_\varepsilon$ is Lebesgue measurable.

To transfer this measurability to $\Omega_\varepsilon$, compute
\[
 D\phi_\varepsilon(v,u)
 =\begin{pmatrix}u^{-1}I_{n-1}&-u^{-2}v\\0&-\varepsilon u^{-2}\end{pmatrix},
 \qquad |\det D\phi_\varepsilon(v,u)|=u^{-(n+1)}.
\]
Since $1-\|\phi_\varepsilon(v,u)\|^2=(u^2-\|v\|^2-1)/u^2$,
change of variables gives, for every Borel $A\subseteq D$,
\[
 \begin{aligned}
 \operatorname{vol}_n(\phi_\varepsilon(A))
 &=\int_A J(\phi_\varepsilon(v,u))
           |\det D\phi_\varepsilon(v,u)|\dd v\,\dd u\\
 &=\int_A(u^2-\|v\|^2-1)^{-(n+1)/2}\dd v\,\dd u.
 \end{aligned}
\]
The pullback measure $A\mapsto\operatorname{vol}_n(\phi_\varepsilon(A))$
therefore has strictly positive density on $D$.  By
\eqref{eq:positive-density-nullsets}, its completion has the same
measurable sets as Lebesgue measure.
Since $\phi_\varepsilon$ is a homeomorphism, for every $c\in\R$ the identity
\[
 \phi_\varepsilon^{-1}
 \{x\in\Omega_\varepsilon:\mu([Q,x])>c\}
 =\{w\in D:f_\varepsilon(w)>c\}
\]
proves measurability of $x\mapsto\mu([Q,x])$ for completed hyperbolic
volume on $\Omega_\varepsilon$.
Subtract $x\mapsto\nu([Q,x])$, already proved continuous.
The two choices of $\varepsilon$ cover $\mathbb H^n\setminus H$,
and $\lambda([Q,x])=0$ on $H$ by simplicity, completing the proof.
\end{proof}

\subsection{Simple valuations and hyperbolic volume}

Fix an orientation of $\mathbb H^n$.  For a nondegenerate ordered hyperbolic simplex
$[x_0,\ldots,x_n]$, write $\varepsilon(x_0,\ldots,x_n)$ for its orientation
sign.  The signed simplex function of a simple valuation $\lambda$ is
\begin{equation}\label{eq:hb-simplex-cocycle}
 f(x_0,\ldots,x_n)=
 \begin{cases}
  \varepsilon(x_0,\ldots,x_n)\lambda([x_0,\ldots,x_n]),
     &\dim[x_0,\ldots,x_n]=n,\\
  0,&\dim[x_0,\ldots,x_n]<n.
 \end{cases}
\end{equation}

\begin{lemma}\label{lem:hb-cocycle}
For a simple valuation $\lambda$ on $\mathcal P(\mathbb H^n)$, the
function $f$ in \eqref{eq:hb-simplex-cocycle} is alternating and satisfies
\begin{equation}\label{eq:hb-cocycle-identity}
 \sum_{i=0}^{n+1}(-1)^i
 f(x_0,\ldots,\widehat{x_i},\ldots,x_{n+1})=0
\end{equation}
for all $x_0,\ldots,x_{n+1}\in\mathbb H^n$.  If $\lambda$ is
isometry-invariant, then
\begin{equation}\label{eq:hb-sign-equivariance}
 f(gx_0,\ldots,gx_n)=\sgn(g)f(x_0,\ldots,x_n)
\end{equation}
for every isometry $g$, where $\sgn(g)=1$ if $g$ preserves orientation
and $\sgn(g)=-1$ otherwise.
\end{lemma}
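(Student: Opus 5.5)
The plan is to work in the Klein model, where hyperbolic simplices are Euclidean simplices in $B^n$. The statement then reduces to the Euclidean oriented simplex identity, transported through \autoref{lem:hb-finite-cuts}. The orientation sign $\varepsilon(x_0,\ldots,x_n)$ is the sign of $\det(x_1-x_0,\ldots,x_n-x_0)$ for the fixed orientation.

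\emph{Alternation.} Permuting the vertices leaves the set $[x_0,\ldots,x_n]$ unchanged and multiplies the determinant by the sign of the permutation. Degeneracy is also permutation invariant. Hence $f$ is alternating, and in particular $f$ vanishes whenever two arguments coincide.

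\emph{Equivariance.} An isometry $g$ maps $[x_0,\ldots,x_n]$ onto $[gx_0,\ldots,gx_n]$ and preserves dimension. Invariance gives $\lambda([gx_0,\ldots,gx_n])=\lambda([x_0,\ldots,x_n])$. The orientation sign is multiplied by $\sgn(g)$, since $g$ is a diffeomorphism that preserves or reverses the fixed orientation, and it does so at every nondegenerate simplex by continuity of $\det Dg$ on the connected $\mathbb H^n$. This proves \eqref{eq:hb-sign-equivariance}.

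\emph{The identity \eqref{eq:hb-cocycle-identity}.} First treat the case where every $n+1$ of the points $x_0,\ldots,x_{n+1}$ are affinely independent in the Klein model. Put $\Delta_i=[x_0,\ldots,\widehat{x_i},\ldots,x_{n+1}]$ and $s_i=(-1)^i\varepsilon(x_0,\ldots,\widehat{x_i},\ldots,x_{n+1})$. The Euclidean fact I want is
\[
\sum_i s_i\one_{\Delta_i}=0
\]
off the union $\Sigma$ of the finitely many hyperplanes spanned by $n$ of the points. To see it, take $y\notin\Sigma$ and consider the affine dependence $\sum_i c_ix_i=0$ with $\sum_i c_i=0$. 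Then $y\in\Delta_i$ exactly when the barycentric coordinates of $y$ with respect to $\Delta_i$ are nonnegative. Writing $y$ as a point moving along the one-parameter family of barycentric representations, the simplices containing $y$ come in pairs with opposite signs $s_i$. Equivalently, the signed sum is the degree of the boundary chain of the $(n+1)$-simplex $[x_0,\ldots,x_{n+1}]$ in $\R^{n+1}$ pushed to $\R^n$, and the boundary of a chain has degree $0$ at generic points. I expect to cite this standard fact, the Euclidean analogue of \eqref{eq:spherical-indicator-identity}, rather than reprove it in detail; this is the main step.

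Given that identity, transfer it to $\lambda$ by the hyperbolic analogue of \autoref{lem:cutting}. Cut the convex hull $P=[x_0,\ldots,x_{n+1}]$ by all the hyperplanes in $\Sigma$; these are totally geodesic in the Klein model. By \autoref{lem:hb-finite-cuts} and simplicity, each $\lambda(\Delta_i)$ equals the sum of $\lambda(C)$ over the full-dimensional cells $C\subseteq\Delta_i$. A cell meeting $\Delta_i$ in its interior is contained in $\Delta_i$, because the facets of $\Delta_i$ lie in $\Sigma$. Therefore
\[
\sum_i s_i\lambda(\Delta_i)=\sum_C\Bigl(\sum_i s_i\one_{\Delta_i}(x_C)\Bigr)\lambda(C)=0,
\]
where $x_C$ is an interior point of the cell $C$. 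Degenerate $\Delta_i$ contribute $0$ to both sides by the definition of $f$ and by simplicity.

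\emph{Non-general position.} Instead of approximating, which would need continuity that is not assumed, I will run the same argument directly. The indicator identity should hold off a null set whenever at least one $\Delta_i$ is nondegenerate, with degenerate simplices assigned weight zero. If all the points lie in a hyperplane, every term is zero and there is nothing to prove. Verifying the indicator identity in the degenerate configurations is the main obstacle. My first attempt will be the chain/degree formulation: the affine map $\R^{n+1}\to\R^n$ sending the standard simplex to the $x_i$ makes $\sum_i s_i\one_{\Delta_i}$ the local degree of the image of the boundary sphere, which is $0$ at regular values regardless of degeneracy. This handles all cases uniformly.
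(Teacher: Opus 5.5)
Your proposal is correct, and its outline matches the paper's: alternation, equivariance, an indicator identity off finitely many hyperplanes, and transfer to $\lambda$ by cutting with \autoref{lem:hb-finite-cuts}. The difference is in how the indicator identity is proved and how special position is handled.

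The paper never separates general from special position. It assumes only that the matrix $B$ with columns $(1,x_i)$ has rank $n+1$. Cofactor expansion then gives $\ker B=\R\alpha$ with $\alpha_i=(-1)^i\det B_{\widehat i}$, so $\sgn\alpha_i$ is exactly your $s_i$ and vanishes for degenerate $\Delta_i$. The paper then makes your one-parameter barycentric sketch rigorous. Choose $\Sigma$ to be a union of hyperplanes containing the affine spans of all subsets of at most $n$ points. For $z\notin\Sigma$, the barycentric representations of $z$ form a segment parallel to $\alpha$, with exactly one zero coordinate at each endpoint. A zero coordinate $t_i=0$ forces $\alpha_i\neq0$, because degenerate $\Delta_i$ lie in $\Sigma$. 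Orienting the segment along $\alpha$ yields one $+1$ and one $-1$. So the degenerate case you single out as the main obstacle disappears once $\Sigma$ is chosen this way. Note that in special position the spans of $n$ points need not be hyperplanes, so $\Sigma$ must consist of hyperplanes containing them.

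Your general-position sketch omits exactly this cofactor step linking the signs of the dependence coefficients $c_i$ to the $s_i$. Without it, ``pairs with opposite signs'' is unjustified.

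Your degree formulation is a valid replacement covering all cases. For the affine map $A$ of the standard $(n+1)$-simplex, the pushforward of the boundary sphere's fundamental class to $H_n(\R^n,\R^n\setminus\{y\})$ factors through $H_n(\R^n)=0$. For $y\notin\Sigma$, all preimages lie in open facets of nondegenerate $\Delta_i$, where $A$ is injective. Hence the local degree equals $\sum_i s_i\one_{\Delta_i}(y)$, once you check that the boundary orientation $(-1)^i$ of the $i$th facet, composed with $A$, gives $s_i$.

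That route is conceptual and disposes of degenerate facets automatically, since $A$ collapses them. Its cost is importing degree theory for piecewise-affine maps, or currents, together with the orientation bookkeeping. The paper's argument is self-contained linear algebra and produces the signs explicitly.
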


\begin{proof}
Permuting the vertices preserves the simplex and multiplies its orientation
by the sign of the permutation, so $f$ is alternating.
If $\lambda$ is isometry-invariant, \eqref{eq:hb-sign-equivariance} holds
because $g$ multiplies the orientation by $\sgn(g)$.
To prove the cocycle identity \eqref{eq:hb-cocycle-identity}, we first
derive an indicator identity and then apply finite cutting.
In oriented Klein coordinates, set
\[
 B=\begin{pmatrix}1&\cdots&1\\x_0&\cdots&x_{n+1}\end{pmatrix},
 \qquad S_i=[x_0,\ldots,\widehat{x_i},\ldots,x_{n+1}].
\]
If $\operatorname{rank}B<n+1$, all $S_i$ are degenerate and
\eqref{eq:hb-cocycle-identity} is immediate.
Assume $\operatorname{rank}B=n+1$ and put
$\alpha_i=(-1)^i\det B_{\widehat i}$, where $B_{\widehat i}$ is obtained
by deleting the column indexed by $i$.  Cofactor expansion and
\eqref{eq:hb-simplex-cocycle} yield
\[
 \ker B=\R\alpha,\qquad \sum_{i=0}^{n+1}\alpha_i=0,\qquad
 (-1)^if(x_0,\ldots,\widehat{x_i},\ldots,x_{n+1})
 =\sgn(\alpha_i)\lambda(S_i).
\]

Choose a finite union $\Sigma$ of affine hyperplanes containing the affine
spans of all subsets of at most $n$ vertices.  For $z\notin\Sigma$, consider
its set of barycentric representations
\[
 I_z=\{t\in\R_{\geq0}^{n+2}:Bt=(1,z)^{\mathsf T}\}.
\]
Since $\ker B=\R\alpha$ and $\sum_i t_i=1$ on $I_z$, a nonempty $I_z$
is a compact segment parallel to $\alpha$.
Every $t\in I_z$ has at most one zero coordinate: two zero coordinates
express $z$ using at most $n$ vertices, contrary to $z\notin\Sigma$.
If $t_i=0$, then $z\in S_i$.  Since the affine hull of every degenerate
$S_i$ is contained in $\Sigma$, this implies $\alpha_i\neq0$.
For sufficiently small nonzero $s$, $t+s\alpha\in I_z$ holds in both
directions if all coordinates of $t$ are positive; if $t_i=0$, it holds
precisely when $s\alpha_i>0$.
Thus a nonempty $I_z$ is nondegenerate, with exactly one zero coordinate
at each endpoint and none in its interior.

When $I_z\neq\varnothing$, orient it in the direction $\alpha$.
The zero coordinate at the initial endpoint has $\alpha_i>0$, and
that at the terminal endpoint has $\alpha_i<0$.
Since $z\in S_i$ precisely when some $t\in I_z$ has $t_i=0$, the
nonzero terms in the following sum are $+1$ and $-1$ if $I_z\neq\varnothing$,
and there are none if $I_z=\varnothing$:
\begin{equation}\label{eq:hb-face-indicator-identity}
 \sum_{i=0}^{n+1}\sgn(\alpha_i)\one_{S_i}(z)=0
 \qquad(z\notin\Sigma).
\end{equation}

Cut $\operatorname{conv}\{x_0,\ldots,x_{n+1}\}$ by the affine hyperplanes
forming $\Sigma$, whose nonempty intersections with $B^n$ are complete totally
geodesic hyperplanes.  Denote the full-dimensional cells by $\mathscr C$.
Each facet hyperplane of a nondegenerate $S_i$ is among these hyperplanes,
since it is the affine hull of $n$ of the vertices.  By \autoref{lem:hb-finite-cuts},
$\lambda(S_i)=\sum_{C\in\mathscr C,\,C\subseteq S_i}\lambda(C)$.
For degenerate $S_i$, both sides vanish by simplicity.
Evaluating \eqref{eq:hb-face-indicator-identity} at an interior point
of each cell verifies \eqref{eq:hb-cocycle-identity}:
\[
\sum_{i=0}^{n+1}(-1)^if(x_0,\ldots,\widehat{x_i},\ldots,x_{n+1})=\sum_{C\in\mathscr C}\Big(\sum_{i:\,C\subseteq S_i}\sgn(\alpha_i)\Big)\lambda(C)=0.\qedhere
\]
\end{proof}

The Klein model lets us apply the following Euclidean statement once
the signed simplex function is smooth.  For an open set $D\subseteq\R^n$,
$C^\infty(D)$ denotes the space of real-valued functions on $D$ whose
partial derivatives of every order exist and are continuous.

\begin{lemma}\label{lem:smooth-simple-density}
For an open convex set $D\subseteq\R^n$, $n\geq1$, let $\lambda$
be a simple valuation on the compact convex Euclidean polytopes contained in $D$.
If its signed simplex function
\[
 f(x_0,\ldots,x_n)
 =\operatorname{sgn}\det(x_1-x_0,\ldots,x_n-x_0)
   \lambda([x_0,\ldots,x_n]),
\]
with value zero on degenerate tuples, is smooth on $D^{n+1}$, then
there is $h\in C^\infty(D)$ such that
\[
 \lambda(P)=\int_P h(x)\dd x
 \qquad(P\subseteq D\text{ a Euclidean polytope}).
\]
\end{lemma}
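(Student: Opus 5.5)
The plan is to show that the cochain $f$ is a coboundary of an explicit smooth $(n-1)$-cochain, and then to read off the density $h$ from a local differential identity. The starting point is that $f$ is alternating and satisfies the cocycle identity
\[
 \sum_{i=0}^{n+1}(-1)^if(x_0,\ldots,\widehat{x_i},\ldots,x_{n+1})=0
\]
for all tuples in $D$. This is proved exactly as in \autoref{lem:hb-cocycle}: the indicator identity there uses only affine geometry and finite cutting in $D$, and convexity of $D$ keeps every cell inside $D$. I then fix a base point $a\in D$ and apply the identity to $(a,x_0,\ldots,x_n)$, which gives
\[
 f(x_0,\ldots,x_n)=\sum_{i=0}^n(-1)^if(a,x_0,\ldots,\widehat{x_i},\ldots,x_n).
\]
So $\lambda$ on a simplex is an alternating sum of values of $f$ on simplices coned from $a$.

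Next I identify $h$. Let $\mathrm{vol}^{\mathrm E}_n$ denote Lebesgue measure and $\varepsilon=\operatorname{sgn}\det(x_1-x_0,\ldots,x_n-x_0)$. I will show that
\[
 f(x_0,\ldots,x_n)=\int_{[x_0,\ldots,x_n]}h\,\varepsilon\dd x
\]
for a smooth $h$. The natural candidate is
\[
 h(x)=\lim_{t\to0^+}\frac{\lambda(x+tP_0)}{\mathrm{vol}^{\mathrm E}_n(tP_0)}
\]
for a fixed reference simplex $P_0\ni0$. Smoothness of $f$ gives this limit directly: put $g(t)=f(x,x+te_1,\ldots,x+te_n)$. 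Simplicity and the cocycle identity, applied to a dilated simplex subdivided into $k^n$ translates and reflected translates, show that $g(t)=O(t^n)$. I expect to prove this via additivity under the standard edgewise subdivision together with smoothness; it is cleaner to use that $f$ is smooth and vanishes on degenerate tuples. In fact the cleanest route is the following. Because $f$ is smooth and vanishes whenever two arguments coincide, or more generally whenever the points are affinely dependent, Taylor expansion shows that $f(x_0,\ldots,x_n)$ is divisible by $\det(x_1-x_0,\ldots,x_n-x_0)$ with a smooth quotient $q$. I would verify this divisibility by Hadamard's lemma on the smooth hypersurface of degenerate tuples, where the determinant has nonvanishing differential away from the lower-rank locus, and then extend to that locus by continuity of $q$. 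This gives $\lambda(S)=n!\,q\,\mathrm{vol}^{\mathrm E}_n(S)$, and I set $h(x)=n!\,q(x,\ldots,x)$.

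To finish, define $\lambda'(P)=\int_Ph\dd x$, which is a simple valuation with smooth signed simplex function $f'$. The difference $\rho=\lambda-\lambda'$ is simple, and its signed simplex function $f-f'$ is $o(\mathrm{vol}^{\mathrm E}_n)$ on small simplices, uniformly on compact subsets of $D$, since $q(x_0,\ldots,x_n)-q(x,\ldots,x)\to0$. For any simplex $S\subseteq D$, I subdivide it by finite cuts (\autoref{lem:hb-finite-cuts} in its Euclidean form) into $N$ simplices of diameter at most $\delta$. Additivity then gives $|\rho(S)|\leq o(1)\sum_{\text{pieces}}\mathrm{vol}^{\mathrm E}_n=o(1)\,\mathrm{vol}^{\mathrm E}_n(S)$, so $\rho(S)=0$. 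Triangulating arbitrary polytopes and using simplicity then gives $\lambda=\lambda'$.

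The main obstacle is the divisibility step: showing that $f/\det$ extends smoothly, or at least continuously, near tuples of affine rank well below $n$, including the diagonal. If Hadamard's lemma there proves awkward, I will fall back on a weaker claim. It suffices that $|f(S)|\leq C\,\mathrm{vol}^{\mathrm E}_n(S)$ and that $f(S)/\mathrm{vol}^{\mathrm E}_n(S)\to h(x)$ as $S$ shrinks to $x$ within a fixed shape class. The subdivision argument only needs the pieces to be translates or homothets of a few reference shapes, so it requires no uniformity over degenerate shapes; smoothness of $h$ then follows from differentiating $h(x)=\lim_t f(x,x+tv_1,\ldots)/(t^n\det v)$ via its Taylor coefficient, which is a fixed derivative of $f$ at the diagonal.
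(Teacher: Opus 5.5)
\textbf{Gap in the main route: the divisibility step.} Your primary argument depends on this step, and it is not established. Hadamard's lemma gives a smooth quotient $q=f/\det(x_1-x_0,\ldots,x_n-x_0)$ only near tuples of affine rank exactly $n-1$, where the gradient of the determinant is nonzero. For $n\geq2$, all $(n-1)$-minors vanish on the locus of rank at most $n-2$. At the diagonal, which is the only place where you later evaluate $q$, the determinant vanishes to order $n$. Extending $q$ by continuity there assumes exactly what must be proved.

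Continuity of $q$ at $(p,\ldots,p)$ says that $\lambda(S)/\operatorname{vol}^{\mathrm E}_n(S)\to h(p)$ uniformly over all simplices shrinking to $p$, including arbitrarily flat ones. Smoothness of $f$ only gives $f(p,p+v_1,\ldots,p+v_n)=\mathcal A_p[v_1,\ldots,v_n]+O(\delta\prod_i\|v_i\|)$. The ratio $\prod_i\|v_i\|/|\det(v_1,\ldots,v_n)|$ is unbounded on flat simplices, so no such uniformity follows. Whether a smooth function vanishing on this singular hypersurface is smoothly divisible by the determinant is a genuine division problem, and the lemma does not need it. Consequently, the uniform bound $|\rho(T)|=o(\operatorname{vol}^{\mathrm E}_n(T))$ over the pieces of an arbitrary fine subdivision is unproved. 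So is the bound $|f(S)|\leq C\operatorname{vol}^{\mathrm E}_n(S)$ listed in your fallback. Separately, the base-point identity you derive from the cocycle relation is never used.

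\textbf{What the fallback needs.} Your fallback points to the paper's route, but two ingredients are missing.

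First, shape independence. The pieces of a fine subdivision of a general simplex come in several shapes; for $n\geq3$ they are not just translates and point reflections of one simplex. So the limit defining $h$ must not depend on the reference shape. This follows from alternation. With $F=D_1\cdots D_nf$, the diagonal form $\mathcal A_p=F(p,\ldots,p)$ is alternating because $f$ is alternating in its last $n$ vertices. Hence $\mathcal A_p=\frac{h(p)}{n!}\det$, where $h(p)=n!\,\mathcal A_p(e_1,\ldots,e_n)$ is smooth.

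Second, a quantitative error bound paired with a subdivision of controlled size. Since $f$ vanishes whenever some $x_i=x_0$, iterating the fundamental theorem of calculus gives
\[
 f(p,p+v_1,\ldots,p+v_n)=\int_{[0,1]^n}F(p,p+t_1v_1,\ldots,p+t_nv_n)[v_1,\ldots,v_n]\dd t .
\]
Therefore $|\lambda(S)-\int_Sh(x)\dd x|\leq C_K\delta^{n+1}$ for every simplex $S\subseteq K$ of diameter $\delta$, flat or not. The error is measured against $\delta^{n+1}$ rather than against volume, so flat pieces are harmless. It suffices that the subdivision has $O(m^n)$ pieces of diameter $O(1/m)$. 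The Kuhn subdivision of $S$ into at most $n!m^n$ simplices of diameter $O(1/m)$ does this and gives total error $O(1/m)$. Additivity over a triangulation follows by cutting along all facet hyperplanes of its simplices.
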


\begin{proof}
We recover a candidate density from the diagonal derivative, then
show that the error vanishes under subdivision.
Write $F=D_1\cdots D_nf$, where $D_i$ denotes differentiation in the
vertex $x_i$.  The diagonal derivative $\mathcal A_p=F(p,\ldots,p)$
is an alternating $n$-linear form, by alternation in the last $n$ vertices.
Thus, for the standard
basis $e_1,\ldots,e_n$ and the smooth function
$h(p)=n!\,\mathcal A_p(e_1,\ldots,e_n)$,
\[
 \mathcal A_p(v_1,\ldots,v_n)
 =\frac{h(p)}{n!}\det(v_1,\ldots,v_n).
\]
Fix a compact Euclidean convex set $K\subseteq D$ and a Euclidean simplex
$S=[p,p+v_1,\ldots,p+v_n]\subseteq K$ of Euclidean diameter $\delta$.
Since $f$ vanishes whenever one of the last $n$ vertices equals $p$,
repeated application of the fundamental theorem of calculus yields
\[
 f(p,p+v_1,\ldots,p+v_n)
 =\int_{[0,1]^n}F(p,p+t_1v_1,\ldots,p+t_nv_n)
 [v_1,\ldots,v_n]\dd t.
\]
The first derivatives of $F$ are bounded on $K^{n+1}$, so the mean value
estimate in the operator norm gives
\[
 \sup_{t\in[0,1]^n}
 \bigl\|F(p,p+t_1v_1,\ldots,p+t_nv_n)-\mathcal A_p\bigr\|
 \leq C_K\delta.
\]
Subtracting $\mathcal A_p[v_1,\ldots,v_n]$ under the integral yields
\[
 |\lambda(S)-h(p)\operatorname{vol}^{\mathrm E}_n(S)|
 \leq C_K\delta\prod_{i=1}^n\|v_i\|.
\]
Since $|h(x)-h(p)|\leq C_K\delta$ on $S$ and
$n!\operatorname{vol}^{\mathrm E}_n(S)=|\det(v_1,\ldots,v_n)|
\leq\prod_i\|v_i\|$, enlarging $C_K$ yields
\begin{equation}\label{eq:smooth-simple-bound}
 \left|\lambda(S)-\int_Sh(x)\dd x\right|
 \leq C_K\delta\prod_{i=1}^n\|v_i\|
 \leq C_K\delta^{n+1}.
\end{equation}
Estimate~\eqref{eq:smooth-simple-bound} also holds for degenerate simplices by simplicity.

To reduce the representation to simplices, triangulate a full-dimensional
Euclidean polytope $P\subseteq D$ by recursively coning triangulated faces to
relative interior points.  For any triangulation $\mathscr T$ of $P$,
cut $P$ along all facet hyperplanes of its simplices.
The resulting full-dimensional cells $\mathscr C$ lie in unique members
of $\mathscr T$.  Iterating the valuation identity across these Euclidean
hyperplanes, as in the proof of \autoref{lem:hb-finite-cuts}, gives
\[
 \lambda(P)=\sum_{C\in\mathscr C}\lambda(C)
 =\sum_{T\in\mathscr T}
   \sum_{C\in\mathscr C,\,C\subseteq T}\lambda(C)
 =\sum_{T\in\mathscr T}\lambda(T).
\]
Integration has the same additivity, since simplex boundaries have
Lebesgue measure zero.  It therefore suffices to prove the representation
on a nondegenerate simplex $S$.

Write $S=L\Delta+b$, where $L$ is invertible and
$\Delta=\{u\in\R^n:0\leq u_1\leq\cdots\leq u_n\leq1\}$.
For an integer $m\geq1$, divide $[0,1]^n$ into the $m^n$ cubes
$(a+[0,1]^n)/m$, with $a\in\{0,\ldots,m-1\}^n$.
Each cube is subdivided into $n!$ simplices by the inequalities
\[
 0\leq mu_{\sigma(1)}-a_{\sigma(1)}\leq\cdots
 \leq mu_{\sigma(n)}-a_{\sigma(n)}\leq1,
\]
one for each permutation $\sigma$ of $\{1,\ldots,n\}$.
None of these simplices crosses a hyperplane $u_i=u_j$:
when $a_i<a_j$, $u_i\leq u_j$ throughout the cube;
when $a_i=a_j$, their order is prescribed by $\sigma$.
Hence the simplices contained in $\Delta$ triangulate $\Delta$.
Their images under $u\mapsto Lu+b$ form a triangulation $\mathscr T_m$
of $S$ with at most $n!m^n$ members, each of Euclidean diameter at most
$\sqrt n\,\|L\|/m$.
By additivity and \eqref{eq:smooth-simple-bound} with $K=S$,
\[
 \left|\lambda(S)-\int_Sh(x)\dd x\right|
 \leq\sum_{T\in\mathscr T_m}
       \left|\lambda(T)-\int_Th(x)\dd x\right|
 \leq\frac{C_S n!\,(\sqrt n\,\|L\|)^{n+1}}{m}
 \longrightarrow0.
\]
For lower-dimensional $P$ and for $P=\varnothing$, both
$\lambda(P)$ and $\int_P h(x)\dd x$ are zero.
Thus the representation holds for every Euclidean polytope contained in $D$.
\end{proof}

For an isometry-invariant simple valuation, the local bound
\eqref{eq:hb-local-bound} and apex measurability suffice for reflection
averaging of the cocycle identity \eqref{eq:hb-cocycle-identity}.
This yields a smooth simplex function, hence
a density by \autoref{lem:smooth-simple-density}.  Invariance then makes
the density relative to hyperbolic volume constant.

\begin{proposition}\label{prop:hb-simple-volume}
Let $\lambda:\mathcal P(\mathbb H^n)\to\R$ be a simple
isometry-invariant valuation satisfying \eqref{eq:hb-local-bound}
on every compact hyperbolic ball.
If $x\mapsto\lambda([Q,x])$ is measurable with respect to the completion
of hyperbolic volume for every nondegenerate hyperbolic $(n-1)$-simplex $Q$, then
there is $c\in\R$ such that
\[
 \lambda(R)=c\operatorname{vol}_n(R)
 \qquad(R\in\mathcal P(\mathbb H^n)).
\]
\end{proposition}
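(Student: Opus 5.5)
The plan is to apply \autoref{lem:smooth-simple-density} in the Klein model, so the core task is to show that the signed simplex function $f$ of \eqref{eq:hb-simplex-cocycle} is smooth. The tool for this is averaging the cocycle identity \eqref{eq:hb-cocycle-identity} over an extra apex. This mirrors \autoref{lem:averaged-primitive}. Since $\mathbb H^n$ has infinite volume, I would not average against the full volume measure. Instead I would average against a smooth compactly supported weight and then use invariance to move the smoothness onto $f$.

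\textbf{Step 1 (primitive).} Fix $\psi\in C_c^\infty(\mathbb H^n)$ with $\int\psi\dd\operatorname{vol}_n=1$. Define
\[
 b(x_0,\ldots,x_{n-1})\coloneqq\int_{\mathbb H^n}\psi(y)f(y,x_0,\ldots,x_{n-1})\dd\operatorname{vol}_n(y).
\]
The integrand is measurable by hypothesis: when $[x_0,\ldots,x_{n-1}]$ is a nondegenerate $(n-1)$-simplex $Q$, we have $f(y,\dots)=\pm\lambda([Q,y])$ with the sign constant on each side of the hyperplane through $Q$, and $f$ vanishes on degenerate tuples. The integrand is bounded by \eqref{eq:hb-local-bound} on a ball containing $\operatorname{supp}\psi$ and the $x_i$. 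Integrating \eqref{eq:hb-cocycle-identity} with $y$ as the extra vertex then gives
\[
 f(x_0,\ldots,x_n)=\sum_i(-1)^ib(x_0,\ldots,\widehat{x_i},\ldots,x_n).
\]
This shows $f$ is locally bounded and measurable. The same identity persists if $\psi$ is replaced by its translate $\psi\circ g^{-1}$, producing the primitive $b_g$.

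\textbf{Step 2 (regularity bootstrap).} Using \eqref{eq:hb-sign-equivariance}, write
\[
 b(gx_0,\ldots,gx_{n-1})=\sgn(g)\int\psi(gy)f(y,x_0,\ldots)\dd y .
\]
Averaging over $g$ in a small neighbourhood of the identity in $\operatorname{Isom}(\mathbb H^n)$, against a smooth bump $\theta$ on the group with $\int\theta=1$, I would define $\tilde b(\mathbf x)=\int\theta(g)\,\sgn(g)\,b(g\mathbf x)\dd g$. The cocycle relation still holds with $\tilde b$ in place of $b$, because for each $g$ one has $f(\mathbf x)=\sgn(g)f(g\mathbf x)=\sum_i(-1)^i\sgn(g)b(g\widehat{\mathbf x}_i)$. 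The key claim is that $\tilde b$ is smooth. The inner integral is $\int\Psi(y)f(y,\mathbf x)\dd y$ with $\Psi(y)=\int\theta(g)\psi(gy)\dd g$ smooth. To move the dependence on $\mathbf x$ into a smooth kernel, I would instead write $\tilde b(\mathbf x)=\int\int\theta(g)\psi(gy)f(y,g\mathbf x)\dots$, substitute $y\mapsto g^{-1}y$, and choose a local section: pick isometries $g$ depending smoothly on $x_0$ with $g x_0=o$ a base point. Integrating over the stabiliser and over the choice of section then expresses $\tilde b(\mathbf x)$ as an integral of $f$ against a kernel depending smoothly on $\mathbf x$. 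The standard trick here is to convolve in every vertex variable simultaneously, for example by averaging the cocycle identity $n+1$ times with independent group elements. This is the main obstacle, and I expect it to need care: one must show that the group averaging smooths in all vertices jointly, not only along orbits of the diagonal action. What I would try first is an $(n+1)$-fold averaged cocycle, obtained by inserting an averaged apex $y_j$ for each vertex successively. This writes $f(\mathbf x)$ as a sum of integrals $\int\prod_j\psi(y_j)f(\text{tuples mixing }y\text{'s and }x\text{'s})$. Invariance then lets each remaining $x_i$-dependence be shifted to $\psi$ by isometries depending smoothly on the $x_i$, and local boundedness justifies differentiating under the integral.

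\textbf{Step 3 (conclusion).} Once $f$ is smooth on $(B^n)^{n+1}$, I would apply \autoref{lem:smooth-simple-density} with $D=B^n$. Hyperbolic polytopes are exactly the Euclidean polytopes in $B^n$, and hyperbolic orientation agrees with the sign of the Euclidean determinant in oriented Klein coordinates. This gives $\lambda(P)=\int_Ph\dd x=\int_P hJ^{-1}\dd\operatorname{vol}_n$ with $hJ^{-1}$ smooth. Isometry invariance of $\lambda$ and of $\operatorname{vol}_n$ forces the density $hJ^{-1}$ to be invariant under the transitive isometry group: the two measures $A\mapsto\lambda$-density integrals over $A$ and over $g^{-1}A$ agree on small simplices, so the continuous densities agree pointwise. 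Hence $hJ^{-1}\equiv c$, which yields $\lambda=c\operatorname{vol}_n$ on all of $\mathcal P(\mathbb H^n)$, including lower-dimensional polytopes by simplicity.
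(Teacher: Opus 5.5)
Your Steps 1 and 3 are sound, and Step 3 is essentially the paper's conclusion. Step 2, however, has a genuine gap: you never establish that $f$ is smooth, and the mechanisms you sketch cannot do so when $n\geq2$.

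Averaging $b(g\boldsymbol{x})$ over $g$ near the identity regularizes only along orbits of the diagonal action. These orbits have dimension at most $\dim\operatorname{Isom}(\mathbb H^n)=n(n+1)/2$, which is less than $n^2=\dim(\mathbb H^n)^n$. Your fallback has the same defect. With a fixed weight $\psi(y)$, no vertex $x_i$ ever enters a smooth kernel. Further apex insertions only produce more integrals of $f$ in which the $x_i$ remain arguments of $f$. For equal weights the new double integrals even vanish by alternation, so nothing is gained. An isometry $g$ with $gx_i=o$ moves some dependence into $\psi\circ g^{-1}$. But the other vertices reappear inside $f$ as $gx_j$, which still depend on $x_i$ and $x_j$. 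A single integration variable cannot absorb the relative position of several vertices.

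The missing idea is a vertex-centred invariant kernel combined with reflection cancellation. Take $k(x,y)=Z^{-1}\psi(\cosh\operatorname{dist}_{\mathbb H}(x,y)-1)$. It is smooth, supported near the diagonal, and satisfies $\int k(x,y)\dd y=1$ and $k(gx,gy)=k(x,y)$. Integrate the cocycle identity for $(y,x_0,\ldots,x_n)$ against $k(x_0,y)\dd y$.

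For $1\leq i\leq n$, consider the term $F_i(y)=f(y,x_0,\ldots,\widehat{x_i},\ldots,x_n)$. Its $n$ fixed vertices, which include $x_0$, lie in a complete totally geodesic hyperplane. The reflection $R_i$ in that hyperplane fixes $x_0$, so it preserves $k(x_0,\cdot)$ and hyperbolic volume. Meanwhile $F_i\circ R_i=-F_i$ by \eqref{eq:hb-sign-equivariance}. These integrals therefore vanish, and alternation gives
\[
 f(\boldsymbol{x})=\int_{\mathbb H^n}k(x_i,y)\,f(x_0,\ldots,x_{i-1},y,x_{i+1},\ldots,x_n)\dd y\qquad(0\leq i\leq n).
\]
From this reproducing identity the argument proceeds as follows.
\begin{enumerate}
\item Together with the local bound, it yields joint continuity of $f$.
\item Fubini's theorem then gives $f(\boldsymbol{x})=\int f(\boldsymbol{y})\prod_ik(x_i,y_i)\dd\boldsymbol{y}$.
\item Smoothness follows by differentiating under the integral.
\end{enumerate}
This step uses orientation-reversing isometries in an essential way, which your proposal never exploits.
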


\begin{proof}
By \eqref{eq:hb-local-bound}, for every compact hyperbolic ball $B$,
\[
 \sup_{x_0,\ldots,x_n\in B}|f(x_0,\ldots,x_n)|
 \leq\sup_{R\in\mathcal P(\mathbb H^n),\,R\subseteq B}|\lambda(R)|
 <\infty.
\]
Fix $x_1,\ldots,x_n$ and use oriented Klein coordinates.
If they are affinely independent, put $Q=[x_1,\ldots,x_n]$.  Then
\[
 f(y,x_1,\ldots,x_n)
 =\sgn\det(x_1-y,\ldots,x_n-y)\,\lambda([Q,y]).
\]
The determinant sign is Borel measurable, and $y\mapsto\lambda([Q,y])$
is measurable by hypothesis.  If the fixed vertices are affinely dependent,
then $\dim[y,x_1,\ldots,x_n]\leq n-1$, so $f(y,x_1,\ldots,x_n)=0$.
Permuting vertices transfers measurability to every variable, since $f$ is alternating.

We first average in a single vertex, using the separate measurability
just established.  Choose $\psi\in C_c^\infty(\R)$ nonnegative and
positive near zero, and set
\[
 Z=\int_{\mathbb H^n}
 \psi\bigl(\cosh\operatorname{dist}_{\mathbb H}(p_*,y)-1\bigr)\dd y\quad\text{ and }\quad
 k(x,y)=Z^{-1}\psi\bigl(\cosh\operatorname{dist}_{\mathbb H}(x,y)-1\bigr),
\]
where $p_*$ is fixed.  In this averaging argument, $\dd y$ denotes
hyperbolic volume, and products of these differentials denote the corresponding product measure.
By compact support and positivity of $\psi$ near zero, $0<Z<\infty$.
The value of $Z$ is independent of $p_*$, since isometries preserve hyperbolic volume.
Therefore
$\int k(x,y)\dd y=1$ and $k(gx,gy)=k(x,y)$ for every isometry $g$.
The identity \cite[Theorem~6.1.1]{Ratcliffe}
\[
 \cosh\operatorname{dist}_{\mathbb H}(x,y)
 =\frac{1-\langle x,y\rangle}
        {\sqrt{(1-\|x\|^2)(1-\|y\|^2)}}
\]
in Klein coordinates shows that $k$ is smooth, including on the diagonal.
Its support satisfies $\operatorname{dist}_{\mathbb H}(x,y)\leq r$
for some $r>0$.

Integrate \eqref{eq:hb-cocycle-identity} for $(y,x_0,\ldots,x_n)$ against
$k(x_0,y)\dd y$.  All terms are absolutely integrable by the support
and local bound \eqref{eq:hb-local-bound}.  For $1\leq i\leq n$, set $F_i(y)=f(y,x_0,\ldots,\widehat{x_i},\ldots,x_n)$.
The affine hull of its $n$ fixed vertices has dimension at most $n-1$;
choose a complete totally geodesic hyperplane containing them and denote
reflection in that hyperplane by $R_i$.
Then $R_i x_0=x_0$ and $F_i(R_i y)=-F_i(y)$ by
\eqref{eq:hb-sign-equivariance}. A change of variables by $R_i$,
using invariance of hyperbolic volume and of the kernel, yields
\[
 \int_{\mathbb H^n} k(x_0,y)F_i(y)\dd y
 =\int_{\mathbb H^n} k(x_0,R_i y)F_i(R_i y)\dd y
 =-\int_{\mathbb H^n} k(x_0,y)F_i(y)\dd y.
\]
Hence this integral is zero. Only the terms omitting $y$ and $x_0$ remain.  By alternation, for $\boldsymbol{x}=(x_0,\ldots,x_n)$ and every $0\leq i\leq n$,
\begin{equation}\label{eq:hb-one-variable-average}
 f(\boldsymbol{x})
 =\int_{\mathbb H^n}k(x_i,y)
       f(x_0,\ldots,x_{i-1},y,x_{i+1},\ldots,x_n)\dd y.
\end{equation}

We next deduce joint continuity from \eqref{eq:hb-one-variable-average}.
For a compact hyperbolic ball $B$, choose a larger compact hyperbolic ball $B'$ containing
its closed $r$-neighborhood in the hyperbolic metric and a bound $M$ for
$|f|$ on $(B')^{n+1}$.
For $\boldsymbol{x},\boldsymbol{x}'\in B^{n+1}$ differing only in the $i$th vertex,
subtract the two instances of \eqref{eq:hb-one-variable-average}:
\[
 f(\boldsymbol{x})-f(\boldsymbol{x}')
 =\int_{B'}\bigl(k(x_i,y)-k(x_i',y)\bigr)
 f(x_0,\ldots,x_{i-1},y,x_{i+1},\ldots,x_n)\dd y.
\]
For arbitrary $\boldsymbol{x},\boldsymbol{x}'\in B^{n+1}$, replace the
vertices one at a time and sum the resulting bounds:
\[
 |f(\boldsymbol{x})-f(\boldsymbol{x}')|
 \leq M\operatorname{vol}_n(B')\sum_{i=0}^n
       \sup_{y\in B'}|k(x_i,y)-k(x_i',y)|
 \qquad(\boldsymbol{x},\boldsymbol{x}'\in B^{n+1}).
\]
By uniform continuity of $k$ on $B\times B'$, the right-hand side tends
to zero as $\boldsymbol{x}'\to\boldsymbol{x}$.
Thus $f$ is jointly continuous and Borel measurable.  For $\boldsymbol{x}\in
B^{n+1}$, write $\boldsymbol{y}=(y_0,\ldots,y_n)$ and
$\dd\boldsymbol{y}=\dd y_0\cdots\dd y_n$.
Since each kernel integrates to one,
$\int|f(\boldsymbol{y})|\prod_i k(x_i,y_i)\dd\boldsymbol{y}\leq M$.
Fubini's theorem permits successive application of \eqref{eq:hb-one-variable-average}:
\begin{equation}\label{eq:hb-joint-average}
 f(x_0,\ldots,x_n)
 =\int_{(\mathbb H^n)^{n+1}}f(y_0,\ldots,y_n)
   \prod_{i=0}^n k(x_i,y_i)\dd y_0\cdots\dd y_n.
\end{equation}
To prove smoothness, consider a tuple of multiindices
$\alpha=(\alpha_0,\ldots,\alpha_n)$ in
Klein coordinates.  The kernel derivatives are uniformly bounded for
$x_i\in B$ and supported in $B'$ as functions of $y_i$.  Hence
\begin{equation}\label{eq:hb-kernel-derivative-bound}
 \left|f(\boldsymbol{y})\prod_{i=0}^n
       \partial_{x_i}^{\alpha_i}k(x_i,y_i)\right|
 \leq M C_\alpha\one_{(B')^{n+1}}(\boldsymbol{y})
  \qquad(\boldsymbol{x}\in B^{n+1}).
\end{equation}
The right-hand side is integrable.  Applying~\eqref{eq:hb-kernel-derivative-bound} to
derivatives of one higher order and using the mean value theorem
provides an integrable bound for each coordinate difference quotient.
Dominated convergence therefore justifies differentiating
\eqref{eq:hb-joint-average} under the integral:
\[
 \partial^\alpha f(\boldsymbol{x})
 =\int_{(\mathbb H^n)^{n+1}}f(\boldsymbol{y})
   \prod_{i=0}^n\partial_{x_i}^{\alpha_i}k(x_i,y_i)\dd\boldsymbol{y}.
\]
Each derivative is continuous by dominated convergence, so
$f\in C^\infty((\mathbb H^n)^{n+1})$.

Applying~\autoref{lem:smooth-simple-density} to Euclidean polytopes
in Klein coordinates gives a smooth $h$ with
$\lambda(P)=\int_P h(x)\dd x$, where $\dd x$ is Euclidean Lebesgue measure.  Set
$q=h/J$, with $J$ as in \eqref{eq:hb-klein-metric-density}, so that
$\lambda(P)=\int_P q\dd\operatorname{vol}_n$.
Hyperbolic isometries preserve both volume and the class $\mathcal P(\mathbb H^n)$.
Thus, for every isometry $g$ and every such $P$,
\[
 \begin{aligned}
 \int_P q(gx)\dd\operatorname{vol}_n(x)
 &=\int_{gP}q(y)\dd\operatorname{vol}_n(y)\\
 &=\lambda(gP)=\lambda(P)=\int_Pq(x)\dd\operatorname{vol}_n(x).
 \end{aligned}
\]
To obtain pointwise invariance of $q$, fix $x\in B^n$.
For sufficiently small $t>0$, the Euclidean cube
$Q_t=x+[-t,t]^n\subset B^n$ is a hyperbolic polytope in the Klein model.
The averages of the continuous function $q\circ g-q$ satisfy
\[
 q(gx)-q(x)
 =\lim_{t\to0^+}\frac1{\operatorname{vol}_n(Q_t)}
   \int_{Q_t}\bigl(q(gy)-q(y)\bigr)\dd\operatorname{vol}_n(y)=0.
\]
Thus $q\circ g=q$ for every isometry $g$.
By transitivity, $q\equiv c$ for some $c\in\R$, and hence
$\lambda(P)=c\operatorname{vol}_n(P)$ for each hyperbolic polytope $P$.
\end{proof}

The reflection cancellation in \eqref{eq:hb-one-variable-average}
uses orientation-reversing isometries.  Invariance under the identity
component alone does not justify this cancellation.

\section{Hyperbolic classifications}
\label{sec:hyperbolic-classifications}
We use \autoref{prop:hb-simple-volume} and dimension reduction to prove the representation and automatic continuity.  Then we determine the coefficient conditions for monotonicity.

\subsection{Representation and automatic continuity}

\begin{proposition}\label{prop:hb-representation}
Let $n\geq1$ and $\mu:\K(\mathbb H^n)\to\R$ be an
isometry-invariant valuation that is continuous or monotone on nonempty
sets.  Then $\mu$ is continuous and has a unique representation
\[
 \mu=a\chi+\sum_{k=0}^{n-1}b_kW_k^{(n)},\qquad
 a,b_0,\ldots,b_{n-1}\in\R.
\]
\end{proposition}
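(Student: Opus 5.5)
We argue by induction on $n$, mirroring the spherical proof of \autoref{prop:polyhedral-classification}: subtract a continuous combination that matches $\mu$ on lower-dimensional sets, recognize the remainder as a simple valuation, and identify it with a multiple of volume via \autoref{prop:hb-simple-volume}. Uniqueness is immediate from the linear independence in \autoref{lem:hb-crofton-properties}, so only existence and continuity require work.

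Put $a=\mu(\{p\})$, which is independent of $p$ by transitivity. For $n=1$, let $\nu=a\chi$; then $\lambda=\mu-\nu$ vanishes on points, so it is simple on $\mathcal P(\mathbb H^1)$. For $n\geq2$, fix a complete totally geodesic hyperplane $H\cong\mathbb H^{n-1}$. Every isometry of $H$ extends to an ambient isometry, as in the proof of \autoref{lem:hb-crofton-properties}. Hence $\mu|_{\K(H)}$ is isometry-invariant and inherits continuity or monotonicity on nonempty sets. By induction,
\[
 \mu|_{\K(H)}=a\chi+\sum_{k=0}^{n-2}b'_kW_k^{(n-1)}.
\]
Using \eqref{eq:hb-restriction}, set $b_{k+1}=b'_k/\alpha_{n,k+1}$ for $0\leq k\leq n-2$ and
\[
 \nu=a\chi+\sum_{k=1}^{n-1}b_kW_k^{(n)}.
\]
Then $\nu$ is continuous and invariant, and $\nu=\mu$ on $\K(H)$. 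Every lower-dimensional compact convex set can be moved into $H$ by an isometry, so $\lambda=\mu-\nu$ vanishes on all lower-dimensional sets. In particular $\lambda$ is simple on $\mathcal P(\mathbb H^n)$.

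\autoref{lem:hb-apex} now applies to $\mu$ and $\nu$. It supplies the local bound \eqref{eq:hb-local-bound} and apex measurability for $\lambda$. Since $\lambda$ is also isometry-invariant, \autoref{prop:hb-simple-volume} gives $\lambda=b_0\operatorname{vol}_n$ on $\mathcal P(\mathbb H^n)$. Hence $\mu$ equals the continuous valuation
\[
 \widetilde\nu=a\chi+\sum_{k=0}^{n-1}b_kW_k^{(n)}
\]
on hyperbolic polytopes, by \autoref{lem:hb-crofton-properties}.

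It remains to pass from polytopes to all of $\K(\mathbb H^n)$. For lower-dimensional sets, equality already holds by the construction of $\nu$ together with $W_0^{(n)}=0$ there. For full-dimensional $K$, there are two cases.

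\emph{Monotone case.} We use the two-sided approximation $R_i^-\subseteq K\subseteq R_i^+$ of \autoref{lem:hb-klein-compactness}, exactly as in \autoref{lem:monotone-approximation}. Monotonicity gives
\[
 \widetilde\nu(R_i^-)\leq\mu(K)\leq\widetilde\nu(R_i^+),
\]
and continuity of $\widetilde\nu$ forces $\mu(K)=\widetilde\nu(K)$.

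\emph{Continuous case.} The approximation $R_i^-\to K$ suffices directly. Equality on $\K(\mathbb H^n)$ then yields continuity of $\mu$ in both cases.

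The main obstacle is the simple residual in the monotone case, where $\lambda$ is neither monotone nor continuous. This is exactly why \autoref{lem:hb-apex} was formulated for $\mu-\nu$. The one point to check carefully is that its hypotheses hold for our $\nu$: it must be defined and continuous on all of $\K(\mathbb H^n)$, which the Crofton valuations guarantee.
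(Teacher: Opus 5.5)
Your proposal is correct and follows the paper's proof essentially step for step. You build $\nu$ by induction through the restriction formula \eqref{eq:hb-restriction} so that it matches $\mu$ on lower-dimensional sets, and you identify the simple residual as $b_0W_0^{(n)}$ on polytopes via \autoref{lem:hb-apex} and \autoref{prop:hb-simple-volume}. You then extend to all of $\K(\mathbb H^n)$ by inner approximation in the continuous case and two-sided approximation in the monotone case, both from \autoref{lem:hb-klein-compactness}, and obtain uniqueness from the linear independence in \autoref{lem:hb-crofton-properties}.
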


\begin{proof}
We induct on $n$, subtracting a continuous invariant valuation that
agrees with $\mu$ on lower-dimensional sets.
Write $a$ for the common point value of $\mu$.
For $n=1$, take $\nu=a\chi$.
For $n\geq2$, assume the result in dimension $n-1$ and fix a complete
totally geodesic hyperplane $H$.  Every isometry of $H$ extends to
$\mathbb H^n$, so the restriction of $\mu$ satisfies the induction
hypotheses.  Using \eqref{eq:hb-restriction} and $\alpha_{n,k}>0$, choose
$b_1,\ldots,b_{n-1}$ so that
\[
 \nu=a\chi+\sum_{k=1}^{n-1}b_kW_k^{(n)},\qquad
 \left.\nu\right|_{\K(H)}=\left.\mu\right|_{\K(H)}.
\]
In either case, $\nu$ is continuous and invariant by
\autoref{lem:hb-crofton-properties}.
For $n\geq2$, every lower-dimensional $K\in\K(\mathbb H^n)$ can be moved into $H$
by an isometry, so $\mu=\nu$ on such sets, as already holds for $n=1$.
Thus $\lambda=\mu-\nu$ is simple and invariant on $\mathcal P(\mathbb H^n)$.

\autoref{lem:hb-apex} supplies the local bound \eqref{eq:hb-local-bound}
and apex measurability required by \autoref{prop:hb-simple-volume}.
Consequently, for some $b_0\in\R$,
\[
 \mu=\varphi\coloneqq\nu+b_0W_0^{(n)}
 \quad\text{on }\mathcal P(\mathbb H^n),
\]
with $\varphi$ continuous.

To extend the representation to full-dimensional compact hyperbolic convex sets,
fix $K\in\K(\mathbb H^n)$ with $\dim K=n$ and take $R_i^-\subseteq K\subseteq R_i^+$ as in
\autoref{lem:hb-klein-compactness}.
If $\mu$ is continuous, passing to the limit along $R_i^-$ proves
$\mu(K)=\varphi(K)$.  If $\mu$ is monotone on nonempty sets, then
\[
 \varphi(K)=\lim_i\mu(R_i^-)\leq\mu(K)
 \leq\lim_i\mu(R_i^+)=\varphi(K).
\]
On lower-dimensional sets, $W_0^{(n)}=0$ and $\mu=\nu$, so the
representation holds there as well, including at $\varnothing$.
The representation makes $\mu$ continuous, since each summand is
continuous.  The coefficients are unique by the linear independence
in \autoref{lem:hb-crofton-properties}.
\end{proof}

\subsection{Monotonicity and the coefficient cone}

The hyperbolic volume term disappears under restriction to a complete totally geodesic hyperplane.  To determine
its coefficient's sign, we construct nested hyperbolic polytopes whose other
quermassintegral increments are negligible compared with their volume
increment.

\begin{lemma}\label{lem:hb-facet-perturbation}
Let $n\geq2$.  There exist $c,t_0>0$, a full-dimensional hyperbolic polytope
$P\in\mathcal P(\mathbb H^n)$, and hyperbolic polytopes
$P_t\in\mathcal P(\mathbb H^n)$ for $0<t<t_0$ such that $P\subseteq P_t$ and
\[
 W_0^{(n)}(P_t)-W_0^{(n)}(P)\geq ct,\qquad
 \lim_{t\to0^+}
 \frac{W_k^{(n)}(P_t)-W_k^{(n)}(P)}t=0
 \quad(1\leq k<n).
\]
\end{lemma}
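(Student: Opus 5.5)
The plan is to take $P$ to be a full-dimensional polytope with a facet $F$ lying in a complete totally geodesic hyperplane $H$, and to let $P_t$ be $P$ with a thin slab glued onto $F$. The volume then grows linearly in $t$. Each $W_k^{(n)}$ with $k\geq1$ is controlled by a Crofton integral over planes that meet $P_t$ but not $P$. Such planes must pass through a thin region near $F$ while avoiding $F$ itself, and we will show their measure is $o(t)$.

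\emph{Construction.} Work in Klein coordinates with $H=B^n\cap\{x_n=0\}$. Let
\[
 P=F\times[-1/2,0]\cdot r,
\]
where $F\subseteq H$ is a small $(n-1)$-dimensional cube around $0$ and $r>0$ is small. So $P$ is a Euclidean box in $r\overline B^n$, and hence a hyperbolic polytope. Set
\[
 P_t=\operatorname{conv}\bigl(P\cup(F+te_n)\bigr)=P\cup(F\times[0,t]).
\]
Then $P\subseteq P_t$, and the added slab $F\times[0,t]$ has Euclidean volume $t\operatorname{vol}^{\mathrm E}_{n-1}(F)$. The Klein density $J$ in \eqref{eq:hb-klein-metric-density} is bounded below by $1$, so
\[
 W_0^{(n)}(P_t)-W_0^{(n)}(P)\geq t\operatorname{vol}^{\mathrm E}_{n-1}(F).
\]
This gives the required constant $c$.

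\emph{Crofton increment for $k\geq1$.} By \eqref{eq:hb-crofton} and \eqref{eq:hb-plane-density},
\[
 W_k^{(n)}(P_t)-W_k^{(n)}(P)=\int\!\!\int\bigl(\one_{\pi_{E^\perp}P_t}(u)-\one_{\pi_{E^\perp}P}(u)\bigr)\rho_k(E,u)\dd u\dd\nu_k(E).
\]
All sets stay in a fixed ball $r'\overline B^n$, so the density $\rho_k$ is bounded on the relevant $u$. It therefore suffices to show
\[
 \int_{\Gr(\R^n,k)}\operatorname{vol}^{\mathrm E}_{n-k}\bigl(\pi_{E^\perp}P_t\setminus\pi_{E^\perp}P\bigr)\dd\nu_k(E)=o(t).
\]
For fixed $E$, write $D=\pi_{E^\perp}P$. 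Then $\pi_{E^\perp}P_t=\operatorname{conv}(D\cup(\pi_{E^\perp}F+t\pi_{E^\perp}e_n))$. Since $D\supseteq\pi_{E^\perp}F$, the new part lies within distance $t\|\pi_{E^\perp}e_n\|$ of $D$, so its measure is at most about $t\|\pi_{E^\perp}e_n\|\cdot\operatorname{area}(\partial D)$, which is $O(t)$ uniformly in $E$.

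\emph{Main obstacle.} The difficulty is that this uniform bound is only $O(t)$, and we need $o(t)$ after averaging over $E$. The key observation is the following. When $e_n\notin E$ and $k\geq1$, the direction $w=\pi_{E^\perp}e_n$ in $E^\perp$ satisfies a dichotomy:
\begin{itemize}
\item either $\pi_{E^\perp}F$ has full dimension $n-k$ in $E^\perp$ and $D$ extends on both sides of it in the $w$-direction, or
\item the translate $\pi_{E^\perp}F+tw$ is absorbed into $D$.
\end{itemize}
Since $k\geq1$, we have $\dim E^\perp\leq n-1=\dim F$. For generic $E$, the projection $\pi_{E^\perp}$ restricted to $e_n^\perp$ is onto $E^\perp$, so $\pi_{E^\perp}F$ is full-dimensional and contains $\pi_{E^\perp}(0)$ in its interior. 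Once $t\|w\|$ is less than the inradius of $\pi_{E^\perp}F$ around the projection of $F$'s center, the added set still meets the interior of $D$. Near the boundary, though, $\pi_{E^\perp}F+tw$ can stick out of $D$.

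To handle this, I would change the construction and choose the bump smarter: set
\[
 P_t=\operatorname{conv}(P\cup\{p+te_n\}),
\]
with $p$ the center of $F$. Then $\pi_{E^\perp}(p+te_n)$ lies in the interior of $\pi_{E^\perp}F\subseteq D$ for all small $t$ (depending on $E$), for every $E$ with $\pi_{E^\perp}|_{e_n^\perp}$ onto. That condition holds for $\nu_k$-almost every $E$, by the argument of \eqref{eq:hb-exceptional-directions}. For such $E$ the projection of $P_t$ equals $D$, so the integrand vanishes for small $t$. Dominated convergence, with the uniform $O(t)$ bound divided by $t$, then gives the limit $0$.

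The volume increment is still linear in $t$: the added pyramid over $F$ with apex height $t$ has Euclidean volume $t\operatorname{vol}^{\mathrm E}_{n-1}(F)/n$. This yields $c$ and completes the plan.
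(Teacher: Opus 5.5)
Your final construction (a box $F\times[-r/2,0]$ with the apex $te_n$ coned over its top facet) and argument --- the pyramid volume bound via $J\geq1$, the eventual equality $\pi_{E^\perp}P_t=\pi_{E^\perp}P$ for every $E\not\subseteq e_n^\perp$, and dominated convergence --- are correct and essentially the paper's proof. The only difference is the uniform $O(t)$ dominating bound: you get it from a $t$-neighbourhood (Steiner-type) estimate on $\pi_{E^\perp}P$, whereas the paper uses the homothety $P\subseteq P_t\subseteq z+(1+2t/r)(P-z)$ with $z=-re_n/2$; either works.
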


\begin{proof}
We bound the hyperbolic volume increment from below by the Euclidean
volume of an added pyramid.  The remaining limits follow by dominated
convergence, using eventual equality of almost every Euclidean orthogonal projection.
In Klein coordinates, fix $0<r<1/\sqrt n$ and set
\[
 P=[-r,r]^{n-1}\times[-r,0],\qquad
 P_t=\operatorname{conv}(P\cup\{te_n\}),\quad0<t<r.
\]
$P$ and $P_t$ lie in the compact hyperbolic ball
$C=\sqrt n\,r\overline B^n\subseteq B^n$.
The closure of $P_t\setminus P$ is the pyramid with base
$[-r,r]^{n-1}\times\{0\}$ and apex $te_n$.
Its height is $t$ and its base area is $(2r)^{n-1}$; since the hyperbolic
volume density satisfies $J\geq1$,
\[
 W_0^{(n)}(P_t)-W_0^{(n)}(P)
 \geq\operatorname{vol}^{\mathrm E}_n(P_t\setminus P)
 =\frac{(2r)^{n-1}}n\,t.
\]

To prove the remaining limits, fix $1\leq k<n$.
For $E\in\Gr(\R^n,k)$, write
$A_E=\pi_{E^\perp}P$ and $A_{E,t}=\pi_{E^\perp}P_t$.
The Crofton formula \eqref{eq:hb-crofton} and its density
\eqref{eq:hb-plane-density} give
\[
 g_t(E)\coloneqq\frac1t\int_{A_{E,t}\setminus A_E}\rho_k(E,u)\dd u,
 \qquad
 \frac{W_k^{(n)}(P_t)-W_k^{(n)}(P)}t
 =\int_{\Gr(\R^n,k)}g_t(E)\dd\nu_k(E).
\]
The projection indicators are jointly Borel measurable by the proof of
\autoref{lem:hb-crofton-properties}, so $g_t$ is measurable by Tonelli's theorem.

For $E\not\subseteq e_n^\perp$, choose $v\in E$ with $v_n<0$.
Then $sv\in\operatorname{int}P$ for sufficiently small $s>0$.
Projecting a Euclidean ball about $sv$ contained in $P$ shows that $A_E$ contains
a neighborhood of $0$ in $E^\perp$.  Consequently,
\[
 A_{E,t}
 =\operatorname{conv}\bigl(A_E\cup\{t\pi_{E^\perp}e_n\}\bigr)
 =A_E\qquad\text{for all sufficiently small }t>0.
\]
Thus $g_t(E)\to0$ for $\nu_k$-almost every $E$ by
\eqref{eq:hb-exceptional-directions}.

To bound $g_t(E)$ uniformly in $E$ and $t$, put $z=-re_n/2$.
Since $z,0\in P$ and $te_n=z+(1+2t/r)(0-z)$, convexity gives
\[
 P\subseteq P_t\subseteq z+(1+2t/r)(P-z).
\]
Write $m=n-k$.  On $A_{E,t}\subseteq\pi_{E^\perp}C$, the density in
\eqref{eq:hb-plane-density} is bounded by
$M_k=\beta_{n,k}(1-nr^2)^{-(n+1)/2}$.
Projecting the preceding inclusion and using homogeneity of Euclidean
volume, we obtain
\[
 0\leq g_t(E)
 \leq M_k\frac{(1+2t/r)^m-1}{t}
       \operatorname{vol}^{\mathrm E}_m(A_E)
 \leq C_k\qquad(0<t<r),
\]
with $C_k$ independent of $E$ and $t$: the mean value theorem bounds
the quotient by $(2m/r)3^{m-1}$ for $0<t<r$, and
$\operatorname{vol}^{\mathrm E}_m(A_E)\leq
\operatorname{vol}^{\mathrm E}_m(\sqrt n\,r\overline B^m)$.
Since $\nu_k$ is a probability measure, dominated convergence proves
the required limit.  The choices $t_0=r$ and $c=(2r)^{n-1}/n$
complete the proof.
\end{proof}

\begin{proposition}\label{prop:hb-coefficient-cone}
For $n\geq1$ and $a,b_0,\ldots,b_{n-1}\in\R$, define
\[
 \varphi=a\chi+\sum_{k=0}^{n-1}b_kW_k^{(n)}.
\]
Then $\varphi$ is monotone on nonempty sets if and only if
$b_0,\ldots,b_{n-1}\geq0$.
It is monotone on $\K(\mathbb H^n)$ if and only if
$a,b_0,\ldots,b_{n-1}\geq0$.
\end{proposition}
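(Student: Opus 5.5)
Sufficiency is immediate from \autoref{lem:hb-crofton-properties}: each $W_k^{(n)}$ is monotone and nonnegative, and $\chi$ is constant on nonempty sets. Hence, if $b_0,\ldots,b_{n-1}\geq0$ and $\varnothing\neq K\subseteq L$, then
\[
 \varphi(L)-\varphi(K)=\sum_{k=0}^{n-1}b_k\bigl(W_k^{(n)}(L)-W_k^{(n)}(K)\bigr)\geq0.
\]
For comparisons with $\varnothing$, I will use $W_k^{(n)}(\{x\})=0$. This shows that $\varphi(\{x\})=a$ is the minimum of $\varphi$ over nonempty sets, once nonempty-set monotonicity holds. Hence $\varphi(K)\geq\varphi(\varnothing)=0$ for all nonempty $K$ if and only if $a\geq0$. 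This settles the second equivalence once the first is proved.

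For necessity I will argue by induction on $n$. For $n=1$, a nondegenerate segment containing a point gives $\varphi(I)-\varphi(\{x\})=b_0\operatorname{vol}_1(I)\geq0$, so $b_0\geq0$. For $n\geq2$, restrict $\varphi$ to a complete totally geodesic hyperplane $H$. By \eqref{eq:hb-restriction}, the restriction equals
\[
 a\chi+\sum_{k=1}^{n-1}\alpha_{n,k}b_kW_{k-1}^{(n-1)}
\]
on $\K(H)$. This restriction is still monotone on nonempty sets, and $\operatorname{Isom}(H)$-invariance is automatic from the formula. The induction hypothesis therefore gives $\alpha_{n,k}b_k\geq0$, and since $\alpha_{n,k}>0$ we get $b_1,\ldots,b_{n-1}\geq0$.

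The remaining coefficient $b_0$ is the main obstacle, because volume is invisible on $H$. This is exactly what \autoref{lem:hb-facet-perturbation} is designed for. Take $P\subseteq P_t$ from that lemma. Monotonicity gives
\[
 0\leq\frac{\varphi(P_t)-\varphi(P)}{t}
 =b_0\frac{W_0^{(n)}(P_t)-W_0^{(n)}(P)}{t}+\sum_{k=1}^{n-1}b_k\frac{W_k^{(n)}(P_t)-W_k^{(n)}(P)}{t}.
\]
The $\chi$ terms cancel because both sets are nonempty. The last sum tends to $0$ as $t\to0^+$.

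Suppose $b_0<0$. The first term is then at most $b_0c<0$ for all small $t$, since the volume increment quotient is at least $c$. Letting $t\to0^+$ gives a contradiction, so $b_0\geq0$. Combined with the $\varnothing$ discussion above, this proves both equivalences.
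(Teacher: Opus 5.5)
Your proposal is correct and follows the paper's proof essentially step by step. Sufficiency comes from monotonicity of the $W_k^{(n)}$, $b_1,\ldots,b_{n-1}\geq0$ by induction through \eqref{eq:hb-restriction}, $b_0\geq0$ from \autoref{lem:hb-facet-perturbation}, and the point value $a$ governs comparisons with $\varnothing$; the only cosmetic difference is that you divide the increment by $t$ and argue by contradiction, whereas the paper divides by $\Delta_0(t)$ and passes to the limit directly.
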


\begin{proof}
For monotonicity on nonempty sets, sufficiency follows from \autoref{lem:hb-crofton-properties}: if $b_0,\ldots,b_{n-1}\geq0$ and $\varnothing\neq K\subseteq L$, then
\[
 \varphi(L)-\varphi(K)
 =\sum_{k=0}^{n-1}b_k\bigl(W_k^{(n)}(L)-W_k^{(n)}(K)\bigr)\geq0.
\]

For necessity, suppose that $\varphi$ is monotone on nonempty sets
and induct on $n$.  In dimension one, a nondegenerate segment $I$
and a point $x\in I$ satisfy
\[
 0\leq\varphi(I)-\varphi(\{x\})=b_0\operatorname{vol}_1(I),
\]
so $b_0\geq0$.
For $n\geq2$, restriction to a complete totally geodesic hyperplane
$H\cong\mathbb H^{n-1}$ gives, by \eqref{eq:hb-restriction},
\[
 \left.\varphi\right|_{\K(H)}
 =a\chi+\sum_{k=1}^{n-1}\alpha_{n,k}b_kW_{k-1}^{(n-1)}.
\]
This restriction is monotone on nonempty sets, so induction and
$\alpha_{n,k}>0$ imply $b_1,\ldots,b_{n-1}\geq0$.

To prove $b_0\geq0$, take $P\subseteq P_t$ from
\autoref{lem:hb-facet-perturbation} and write
$\Delta_k(t)=W_k^{(n)}(P_t)-W_k^{(n)}(P)$ for $0\leq k<n$.
Since $\Delta_0(t)\geq ct>0$ and $\Delta_k(t)=o(t)$ for $1\leq k<n$,
\[
 0\leq\frac{\varphi(P_t)-\varphi(P)}{\Delta_0(t)}
 =b_0+\sum_{k=1}^{n-1}b_k\frac{\Delta_k(t)}{\Delta_0(t)}
 \longrightarrow b_0\qquad(t\to0^+).
\]
Thus $b_0\geq0$. It remains to include comparisons with $\varnothing$. Once $\varphi$ is monotone on nonempty sets, every $\varnothing\neq K$ and $x\in K$ satisfy $\varphi(K)\geq\varphi(\{x\})=a$.
Since $\varphi(\varnothing)=0$, the additional comparisons hold if and only if $a\geq0$.
\end{proof}

\begin{proof}[Proof of \autoref{thm:hyperbolic}]
\autoref{prop:hb-representation} gives the unique representation
in \textup{(i)} whenever $\mu$ is continuous or monotone on nonempty sets.
Such linear combinations are continuous by
\autoref{lem:hb-crofton-properties}, proving \textup{(i)} and automatic
continuity in the monotone case.
\autoref{prop:hb-coefficient-cone} then proves \textup{(ii)} and \textup{(iii)}.
\end{proof}

\section*{Acknowledgements}

The author used OpenAI's Codex for discussions of mathematical arguments,
including the measurability and integrability of apex functions,
and for assistance with drafting and revising the manuscript.

\end{document}